\documentclass[a4paper,10pt]{article}

\usepackage{ucs}
\usepackage[utf8]{inputenc}

\usepackage{graphicx}
\usepackage{amsfonts,dsfont}
\usepackage{dsfont}
\usepackage{amssymb}
\usepackage{amsmath}
\usepackage{amsthm}
\usepackage{enumerate}
\usepackage{stmaryrd}
\usepackage{fullpage}
\usepackage{ifthen}
\usepackage{subfigure}
\usepackage{epic}
\usepackage{authblk}
\usepackage{textcomp}
\usepackage{mathrsfs}
\usepackage{bm}
\usepackage[small]{caption}
\usepackage{tikz,tkz-tab}
\usetikzlibrary{matrix}
\usepackage{pgfplots}
\pgfplotsset{compat=newest} 
\pgfplotsset{plot coordinates/math parser=false}

\usepackage[hypertexnames=false,colorlinks=true,linkcolor=black,citecolor=blue]{hyperref}
\usepackage[numbers,comma,square,sort&compress]{natbib}
\usepackage{geometry}

\usepackage{color}
\usepackage{titlesec}
\usepackage{mathrsfs}

\graphicspath{{eps/}{pdf/}{png/}}

\def\varep{\varepsilon}

\newcommand{\bqq}{\begin{equation}}
\newcommand{\eqq}{\end{equation}}
\newcommand{\bqs}{\begin{equation*}}
\newcommand{\eqs}{\end{equation*}}

\renewcommand{\Re}{\textnormal{Re}} 
\renewcommand{\Im}{\textnormal{Im}} 
\newcommand{\R}{\mathbb{R}} 
\newcommand{\N}{\mathbb{N}} 
\newcommand{\I}{\mathcal{I}}

\renewcommand{\S}{\mathcal{S}}

\newcommand{\K}{\mathcal{K}}

\newcommand{\md}{\mathrm{d}}

\newtheorem{lem}{Lemma}[section]
\newtheorem{thm}{Theorem}[section]
\newtheorem{prop}{Proposition}[section]

\newtheorem{rmk}[lem]{Remark}
\newtheorem{defi}[lem]{Definition}

\numberwithin{equation}{section}

\title{\bf Sharp asymptotics for a transport model with a nonlocal condition of the Fisher-KPP type at the boundary}

\author{Gr\'egory Faye$^{a}$, Jean-Michel Roquejoffre$^{a}$~and~Mingmin Zhang$^{a,b}$
\thanks{This work has been supported by the French Agence Nationale de la Recherche
		(ANR), in the framework of Indyana (ANR- 21-CE40-0008) and ReaCh (ANR-23-CE40-0023-01)
		projects.  M.Z. also acknowledges the support  by the Occitanie region, the European Regional Development Fund
		(ERDF), and the French government, through the France 2030 project managed by the National
		Research Agency (ANR) with the reference number ``ANR-22-EXES-0015'', and by the Excellent Young Talents Program (Overseas) of the National Natural Science Foundation of China. Email addresses: gregory.faye@math.univ-toulouse.fr; jean-michel.roquejoffre@math.univ-toulouse.fr; mingmin.zhang.math@gmail.com.
		}}

\date{}

\begin{document}
\maketitle

\vspace{-5mm} 

\begin{center}
	\small
	$^a$ Univ Toulouse, CNRS, Institut de Math\'ematiques de Toulouse, Toulouse, France\\[1mm]
	$^b$ School of Mathematical Sciences, University of Science and Technology of China, Hefei, Anhui 230026,  China
\end{center}

\begin{abstract}
This paper is concerned with the precise asymptotics, as time goes to infinity,  of a transport problem in a half plane coupled with a nonlinear nonlocal boundary condition. This system arises from a class of models for the spatial spread of epdemics, its space independent version being the classical Kermack-McKendrick model.  Using ideas pertaining to the study of nonlocal equations of the Fisher-KPP type, and exploiting the particular structure of the model, we prove that any initially localized solution will lag behind the minimal traveling wave, with a delay that grows logarithmically in time.

\end{abstract}

\hspace{0.6em} {\small {\bf Keywords:}  Sharp asymptotics; transport problem; nonlocal spatial interactions; logarithmic delay.}

\vskip 3mm

\section{Introduction and main results}

We address the sharp asymptotics, as $t$ tends to infinity, of the level sets for the following  problem with unknown function $\rho(t,i,x)$, $t>0$, $i\in[0,i_\dagger)$ and $x\in\R$:
\bqq
\label{fkpp}
\left\{
\begin{split}
	&\partial_t \rho(t,i,x) + \partial_i\rho(t,i,x) = - \gamma(i) \rho(t,i,x),~~~~\quad t>0, ~ i\in(0,i_\dagger),  ~ x\in\R,  \\
	&\rho(t,0,x) =\S_0 \bigg(1-\exp\Big(-\int_0^\infty \tau(i) \K*\rho(t,i,x)\md i \Big)\bigg),~ \quad  t> 0,  ~ x\in\R,
\end{split}\right.
\eqq
associated with compactly supported initial condition $\rho_0(i,x)$.  The model under study is therefore a linear transport equation in the upper half plane, together with a nonlinear and nonlocal Dirichlet condition that couples the values of the unknown function at the boundary to those inside.  

Its primary motivation is the study of the nonlocal Kermack-McKendrick model \cite{KMcK} for the spread of epidemics  in our previous work \cite{FRZ2023}, and it presents specific issues calling for a mathematical study of its own. In the context of modelling, $\rho(t,i,x)$  represents the cumulative density of infected individuals at time $t>0$ with elapsed time since infection $i\in(0,i_\dagger)$ and at position $x\in\R$,  $\gamma(i)$ denotes the recovery rate, $\tau(i)$ models the infection rate within a spatially homogeneous population, while the integral kernel $\K\ge 0$ describes nonlocal spatial interactions of individuals.

The large time behavior of the solutions of model \eqref{fkpp} has already been addressed at the end of the 1970's.  In this introduction, we just mention the pioneering works of Diekmann \cite{Diekmann1}, \cite{Diekmann2} and Thieme \cite{Thieme1}, \cite{Thieme2}, which deal with systems that are closely related to \eqref{fkpp}. To sum up their contributions, they prove that the solutions have the spreading property, that is, each level set of a solution emanating from a compactly supported initial datum will advance at the minimal velocity of a linear wave.  The main tool is the reduction of \eqref{fkpp} to an integral equation admitting a form of comparison principle, which allows a precise computation of the spreading speed. They also study  traveling waves, which reveals the same intrinsic essential  dynamical mechanism  as the KPP type reaction-diffusion equations, in reference to the fundamental work of Kolmogorov-Petrovsky-Piskunov \cite{KPP37}. 
These works have been generalized in many directions,  and have in particular given birth to the theory of monotone systems. As the present work goes in another direction, we will not attempt here to present an extensive bibliography of this rich subject.

Much less is understood  about how the precise asymptotics of \eqref{fkpp} as $t\to+\infty$  is influenced by the piling-on effect of the transport equation and the nonlocal nonlinear boundary condition. While the sharp asymptotics issue has fostered an important activity for the Fisher-KPP equation and related models, something that will be accounted for later in this introduction, such explorations for hyperbolic type models, additionally coupled with nonlocal boundary conditions, have so far not been undertaken. To what extent the behavior of  this hyperbolic model resembles that of the KPP type  reaction-diffusion equations  is the question that we are going to tackle in this paper.

The observation at the basis  of our work is that the structure of model \eqref{fkpp} is reminiscent of that of the ``Road-field model''  introduced by  the second author together with Berestycki and Rossi \cite{BRR13} to model biological invasions directed by lines of fast diffusions. In \cite{BRR13}, a line, having a diffusion of its own, exchanges individuals with a neighboring half-spece, where a reaction-diffusion process occurs. It is shown that, in the end, the line dictates the global behavior of the system. This observation has already triggered a new look at the long time behavior of \eqref{fkpp},  initiated in \cite{FRZ2023}. There, we not only reprove the former results of Diekmann and Thieme, but we add new elements of understanding to the traveling wave solutions. The goal of the present paper is to exploit this observation further, in order to understand the sharp asymptotics of \eqref{fkpp}. One important advantage over the integral formulation -- that we nevertheless exploit repeatedly in our work -- of this viewpoint is that \eqref{fkpp} appears as an initial value problem, which allows comparisons of solutions starting from arbitrary large times. 

Throughout this paper, we assume that $0<i_\dagger<+\infty$ and that 
\begin{itemize}
\item[(i)] $\gamma:[0,i_\dagger)\mapsto\R_+$ is a nonnegative function such that $\gamma(i)\in L^1_{loc}([0,i_\dagger))$, $\int_0^{i_\dagger}\gamma(s)\md s=+\infty$. We set
\begin{equation*}
	\pi(i):=e^{-\int_0^i\gamma(s)\md s},~~i\in[0,i_\dagger].
\end{equation*}
\item[(ii)] $\tau:[0,i_\dagger]\mapsto\R_+$ is a nontrivial, bounded and absolutely continuous function such that  $0\leq\tau(i)\leq \tau_\infty$ for all $i\in[0,i_\dagger]$ with some $\tau_\infty>0$.

\item[(iii)] Set $\omega(i):=\tau(i)\pi(i)$ for $i\in[0,i_\dagger]$.  From (i)-(ii), $\omega$ is absolutely continuous and bounded on $[0,i_\dagger]$.

\item[(iv)] The kernel $\K\in W^{1,1}(\R)$ is even, nonnegative and compactly supported such that  $\int_{\R}\K(x)\md x=1$ and $\text{spt}(\K)=[-R,R]$. 

\item[(v)]  Define
$
	\displaystyle\mathscr{R}_0:=\S_0 \int_0^{i_\dagger} \omega(i)\md i = \S_0 \int_0^{i_\dagger} \tau(i) e^{-\int_0^i\gamma(s)\md s}\md i,
$ 
 and assume $\mathscr{R}_0>1$.
This ensures, based on  \cite{FRZ2023} that spreading actually occurs.
\item[(vi)] The initial function $\rho_0$ is   nonnegative, absolutely
continuous and compactly supported  in $[0,i_\dagger)\times\R$ such that $\rho_0/\pi\in L^\infty([0,i_\dagger)\times\R)$ and such that $\rho_0(i,\cdot)/\pi(i)\in H^1_x(\R)$ uniformly in $i\in[0,i_\dagger)$. 

\item[(vii)] 
There exists $\tau_0>0$ such that
\begin{equation}
	\label{cdn-strict positiv}
	[0,\tau_0]\subset\text{spt}(\tau)\cap\{i\in[0,i_\dagger)|\rho_0(i,x)>0~\text{for some}~x\in\R\}. 
\end{equation} 
\end{itemize}

\begin{rmk}\label{rk1.1}
\textnormal{
	Condition (vii) guarantees the strict positivity of the solution for $t>i$.
 The function $\omega$ is often understood as a well-defined function in $\R_+$ with zero extension outside $[0,i_\dagger)$. 	Compared with the hypotheses in \cite{FRZ2023},  additional restriction  here is $i_\dagger<+\infty$ which  helps to clarify the maximum principles.
}
\end{rmk}

 Let us  recall some results for \eqref{fkpp}; see  \cite{FRZ2023} for the proofs. The unique positive steady solution of \eqref{fkpp} is $$\rho^s(i)=\S_0\rho^*\pi(i),$$ 
where $\rho^*\in(0,1)$ is the unique positive constant solution to $v=1-e^{-\mathscr{R}_0 v}$.  Moreover, the  {\it  dispersion relation} 
\begin{equation}
	\	\label{dr}
	D_c(\alpha):=	\S_0\int_{\R}\K(x)e^{\alpha x} \md x \int_0^{i_\dagger} \omega(i) e ^{-\alpha ci}\md i-1=0,~~~~~c>0,~\alpha>0,
\end{equation}
admits a unique solution pair $(c(\alpha),\alpha)$ for each $\alpha\in(0,+\infty)$.
In particular, by defining $c_*$ as 
\begin{equation*}\label{c*-def}
	c_*:=\underset{\alpha\in(0,+\infty)}{\inf} c(\alpha)>0,
\end{equation*}
there is the  unique value $\alpha_*\in(0,+\infty)$ such that above infimum is attained. 
On the one hand, such a $c_*$ is the {\it asymptotic spreading speed}, in the sense that the solution $\rho$ to the transport problem \eqref{fkpp}, with a  nonnegative, absolutely
continuous and compactly supported  initial datum $\rho_0$ on $[0,i_\dagger)\times\R$ such that $\rho_0/\pi\in L^\infty([0,i_\dagger)\times\R)$, satisfies
\bqs
\forall c\in(0,c_*),~\forall j\in(0,i_\dagger),~~~~~~~\underset{t\rightarrow+\infty}{\lim } \underset{|x|\leq ct,~ 0 \leq i \leq j}{\sup} \left| \rho(t,i,x)-\rho^s(i)\right| =0.
\eqs
and
\bqs
\forall c>c_*,~\forall j\in(0,i_\dagger),~~~~~~~\underset{t\rightarrow+\infty}{\lim } \underset{|x|\geq ct,~ 0 \leq i \leq j}{\sup} \rho(t,i,x) =0.
\eqs
On the other hand,  $c_*$ is also the minimal wave speed, in the sense that \eqref{fkpp} admits traveling wave solutions
\begin{equation}
	\label{TW_original}
	U(t,i,x)=\S_0\chi_{c}\big(x-c(t-i)\big)\pi(i),~~~~~~~(t,i,x)\in\R\times[0,i_\dagger)\times\R,
\end{equation}  
 if and only if $c\ge c_*$, where   $\chi_{c}(z)$ satisfies
\begin{equation}\label{chi_TW}
	\left\{
	\begin{split}
		&	\chi_{c}(z)=1-\exp\Big(-\S_0\int_0^{i_\dagger} \omega(i) \K*\chi_{c}(z+ci)\md i\Big),~~~z\in\R,\\ 
		&\chi_c(-\infty)=\rho^*,~~\chi_c(+\infty)=0,~~0<\chi_c <\rho^*~~~\text{in}~\R.
	\end{split}\right.
\end{equation}
Moreover, $\chi_{c}$ has the asymptotics\footnote{Throughout this paper, we use the convention $f(t)\approx g(t)$ as $t\to+\infty$ if $\lim_{t\to+\infty}f(t)/g(t)=1$; while  we use $f(t)\sim(\lesssim,\gtrsim) g(t)$ as $t\to+\infty$ if $\lim_{t\to+\infty}f(t)/g(t)=(\le,\ge)C$  for some universal constant $C$.}  (up to normalization)
\begin{align}\label{normalization of TW}
	\chi_c(z)\approx e^{-\alpha_c z}~~~(\text{for}~c>c_*),~~~~\chi_{c_*}(z)\approx ze^{-\alpha_* z}\quad \text{ as } z\rightarrow+\infty,
\end{align}
in which $\alpha_c\in(0,\alpha_*)$ is the unique value such that $D_c(\alpha_c)=1$ for each $c> c_*$.

\vskip 2mm

We are concerned here with  the precise asymptotic position of the level sets, up to $O(1)$ error, something that goes one step beyond the spreading property.  Before stating our main result, let us give a brief account of what is already known for the Fisher-KPP equation
\begin{equation}
	\label{KPP-nonlinearity}
	u_t=u_{xx}+f(u),~~~~~t>0,~x\in\R,
\end{equation}
and its variants. Here,  $f$ is a $C^1$ function such that  $f(0)=f(1)=0$,  $f'(1)<0$ and   $0<f(s)\le f'(0)s$ for all $s\in(0,1)$.
 Equation \eqref{KPP-nonlinearity} admits traveling wave solutions  $u(t,x)=\varphi_\nu(x-\nu t)$ if and only if $\nu\ge \nu_*=2\sqrt{f'(0)}$, and the wave profile satisfies $\varphi_\nu''+\nu \varphi_\nu'+f(\varphi_\nu)=0$ in $\R$, $\varphi_\nu(-\infty)=1$, $\varphi_\nu(+\infty)=0$ and $0< \varphi_\nu<1$ in $\R$. Moreover, the leading edge behaviors when $x\to +\infty$ of the profiles $\varphi_\nu$  obey: $\varphi_\nu(x)\sim e^{-\lambda_\nu x}$ if $\nu>\nu_*$, whereas $\varphi_{\nu_*}(x)\sim xe^{-\lambda_{\nu_*} x}$ if $\nu=\nu_*$, where $\lambda_\nu>0$ is the smallest value such that $\lambda^2-\nu\lambda+f'(0)=0$ for each $\nu\ge \nu_*$.

Define for  each $t>0$ the leading edge of the level set of the solution $u(t,\cdot)$ to \eqref{KPP-nonlinearity} associated with   Heaviside type initial data  as $s(t)=\sup\{x\in\R~|~u(t,x)=m\}$ for any $m\in(0,1)$. The description  of the position for $s(t)$ was first discussed by Kolmogorov-Petrovsky-Piskunov \cite{KPP37} for $f(u)=u(1-u)^2$, showing that $s(t)=c_*t+o_{t\to+\infty}(t)$ with $c_*=2$. Later, a remarkable result was proved  by  {Bramson} \cite{Bram2},  who showed by analyzing the representation of the solutions of the Fisher-KPP equation via the Feynman-Kac formula that
\begin{equation}
	\label{s}
	s(t)=c_*t-\frac{3}{2\lambda_*}\ln t+\sigma_\infty+o_{t\to+\infty}(1),
\end{equation}
with $c_*=2$ and $\lambda_*=1$. The presence of this logarithmic lag has since been identified in  wide classes of reaction-diffusion models arising in various domains of physics and chemistry. The works \cite{EvS00}, \cite{EvSP} have uncovered the universality of this feature and have proposed an explanation of the phenomenon, mostly in the formal style of matched asymptotics.
A weaker version of  \eqref{s} via a new way of thinking with purely PDE arguments  was provided in  \cite{HNRR13}, where it is shown that the solutions of the KPP equation are closely related to
those of the associated linearized problem with a Dirichlet moving boundary. This turns out to be the key information in catching precisely the asymptotic behavior of the solution in the diffusive regime $|x-c_*t|= O(\sqrt{t})$. This observation has led to a full PDE proof of \eqref{s}  \cite{NRR17}, and has allowed \cite{HNRR16}  the retrieval of the logarithmic lag, with $O(1)$ error, to spatially periodic case with $f(x,u)$ instead of $f(u)$, something that had not been done before. The presence of the logarithmic lag, still with $O(1)$ precision, has since then been identified in equations with no comparison principle, such as in KPP equations with nonlocal nonlinearites \cite{BHR20}, or models of higher order \cite{AS22}, furthering the initial insight of \cite{HNRR13}, and confirming the formal predictions of \cite{EvS00}, \cite{EvSP}.
The intrinsic connection between some classes of KPP equations and  branching Brownian motion/branching random walk have allowed extensions to classes of nonlocal or discrete equations. The most precise result so far is  \cite{A2013}, which  proves \eqref{s} to $o(1)$ precision in this context.  The contribution  \cite{Graham} proposes a study of the nonlocal Fisher-KPP equation with $O(1)$ precision  for a large class of diffusion kernels, with proofs mixing PDE and probabilistic arguments.  This result is made more precise in \cite{Roquejoffre2022} with a new  PDE  proof. The idea developed there is at the basis of the study  \cite{BFRZ2023} for  KPP equations on one-dimensional lattices. We finally refer to  \cite{Gartner1982}, \cite{Ducrot}, \cite{RRR2019} for related results on the multi-dimensional homogenous Fisher-KPP equation.

To the best of our knowledge, our work is the first mathematical exploration of sharp asymptotics for this class of hyperbolic  type Fisher-KPP problems, in the sense that we are not aware of any formal/numerical prediction or  probabilistic exploration of the model. 

We transform \eqref{fkpp} by setting, as in our previous work  \cite{FRZ2023}, $\varrho(t,i,x):=\frac{\rho(t,i,x)}{\pi(i)}$, so as to obtain:\begin{equation}
	\label{fkpp-auxi}
	\left\{\begin{split}
		&	\partial_t \varrho(t,i,x) + \partial_i\varrho(t,i,x) =0,~~~~~~~~~~~~~~~~~~~~\quad t>0, \quad i\in(0,i_\dagger), \quad x\in\R, \\
		&	\varrho(t,0,x)=\S_0 \left(1-\exp\Big(-\int_0^{i_\dagger} \omega(i) \K*\varrho(t,i,x)\md i \Big)\right), ~~\quad t> 0,  \quad x\in\R,
	\end{split}\right.
\end{equation}
associated with $\varrho_0(i,x):= \displaystyle\frac{\rho_0(i,x)}{\pi(i)}$.

Define  the level set of $\varrho(t,i_0,x)$ with any $i_0\in[0,i_\dagger)$ as
\begin{equation*}
	E_m(t):=\sup\{x\in\R|\varrho(t,i_0,x)=m\},~~~~\text{for}~~t~~\text{large},
\end{equation*}
with any $m\in(0,\S_0\rho^*)$. Our main result is then that any initially localized solution $\varrho$  will lag behind the minimal traveling waves and the delay grows logarithmically in time.

\begin{thm}
	\label{thm-1'}
Let $\varrho$ be the solution  to problem \eqref{fkpp-auxi} with  nonnegative, absolutely
	continuous, bounded and compactly supported initial datum $\varrho_0$  in $[0,i_\dagger)\times\R$  such that $\varrho_0(i,\cdot)\in H^1_x(\R)$ uniformly in $i\in[0,i_\dagger)$. Then  
	\begin{equation*}
		E_m(t)=x-c_*t+\frac{3}{2\alpha_*}\ln t+O_{t\to+\infty}(1).
	\end{equation*} 
\end{thm}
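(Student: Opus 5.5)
We reduce the problem to a scalar nonlocal equation for the boundary value and then adapt the PDE strategy of \cite{HNRR13,Roquejoffre2022} for nonlocal Fisher-KPP equations. Integrating the transport equation along characteristics gives $\varrho(t,i,x)=\varrho(t-i,0,x)$ for $t>i$. Hence the boundary trace $u(t,x):=\varrho(t,0,x)$ satisfies, for $t>i_\dagger$, the delay-integral equation
\begin{equation*}
u(t,x)=\S_0\Big(1-\exp\Big(-\int_0^{i_\dagger}\omega(i)\,\K*u(t-i,\cdot)(x)\,\md i\Big)\Big).
\end{equation*}
Since $E_m(t)$ for $\varrho(t,i_0,\cdot)$ is the level set of $u(t-i_0,\cdot)$, and $c_*i_0=O(1)$, it suffices to locate the level sets of $u$. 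This equation enjoys a comparison principle: the map is monotone in $u$ because $\omega,\K\ge0$. The comparison is applied in the initial-value form, using $\varrho$ on the strip $[t_0-i_\dagger,t_0]$ as data. This is exactly why the transport formulation is convenient: we may compare solutions starting at arbitrary large times.

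\textbf{Linearisation in the moving frame.} We pass to the frame $z=x-c_*t+\frac{3}{2\alpha_*}\ln t$ and set $u=e^{-\alpha_* z}p$. The linearised operator $L u=\S_0\int\omega\,\K*u(t-i)\,\md i$ then becomes, thanks to $D_{c_*}(\alpha_*)=0$ and $D_{c_*}'(\alpha_*)=0$, an operator acting on $p$ whose symbol is $1+\tfrac{1}{2}D''\partial_z^2+\dots$ to leading order. In this sense $p$ behaves like a solution of a heat equation with effective diffusivity $\sigma^2\propto D_{c_*}''(\alpha_*)>0$, coupled with a memory term in time through the delay $i$. The logarithmic shift produces the extra factor $-\frac{3}{2t}p$ (up to the drift correction), which corresponds to the Dirichlet-type heat problem on the half line $z>0$. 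There, solutions decay like $z e^{-z^2/(4\sigma^2 t)}t^{-3/2}$, and the factor $t^{3/2}$ is exactly compensated.

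\textbf{Construction of sub- and supersolutions.} For the upper bound, we build a supersolution of the form $\bar u=\min\big(\S_0\rho^*,\,A\,e^{-\alpha_* z}\,\psi(t,z)\big)$, using $1-e^{-s}\le s$. Here $\psi$ approximates $(z+z_0)_+\,e^{-z^2/(Ct)}$ up to corrections of order $t^{-1/2}$ and handles the delay and the kernel by Taylor expansion of the symbol around $\alpha_*$. Alternatively, we compare with $\S_0\chi_{c_*}$ shifted, combined with a heat-type analysis in the region $z\sim\sqrt t$, as in \cite{HNRR13}.

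For the lower bound, we use $1-e^{-s}\ge s-s^2/2$ and build a subsolution $\underline u=e^{-\alpha_*z}\big(\phi(t,z)-\text{correction}\big)$. It is supported in $0<z<\sqrt t$, with $\phi$ the Dirichlet heat kernel shape $z\,e^{-z^2/(4\sigma^2 t)}$. This gives $u\ge c>0$ at $z=O(1)$ in the frame $x=c_*t-\frac{3}{2\alpha_*}\ln t+O(1)$. We then propagate the lower bound backward into the bulk, using the spreading result recalled above and the travelling wave $\chi_{c_*}$ from \eqref{chi_TW}. A translation of $\S_0\chi_{c_*}$ placed below $u$ pins the level set to within $O(1)$.

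\textbf{Main obstacle.} The hardest step is controlling the error terms in the sub/supersolutions. The operator is nonlocal both in space (through $\K$, of compact support) and in time (through the delay $i\in(0,i_\dagger)$). With the time-dependent shift $\frac{3}{2\alpha_*}\ln(t-i)$, the Taylor expansion in $i$ generates terms of order $1/t$ and $z/t^{3/2}$. These must be absorbed without destroying the $t^{-3/2}$ balance. I would first try the approach of \cite{Roquejoffre2022}: work on the Fourier/Laplace side of the linear operator near $\alpha_*$, prove a sharp Dirichlet-type estimate $p(t,z)\asymp z\,t^{-3/2}$ for the linearised problem with a moving absorbing boundary, and then treat the nonlinearity as a perturbation of size $e^{-2\alpha_*z}$. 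This perturbation is integrable in the region $z\ge0$. The $H^1_x$ regularity of $\varrho_0$ will be used to obtain uniform smoothness in $x$ for the initial strip, which allows pointwise comparison.
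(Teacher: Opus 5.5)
Your text is a heuristic outline rather than a proof. The step on which everything rests is deferred, and the route you suggest for it is exactly the one that is not available. You want the $t^{-3/2}$ law from ``a sharp Dirichlet-type estimate for the linearised problem with a moving absorbing boundary'', obtained on the Fourier/Laplace side. Fourier--Laplace analysis only yields the whole-line kernel: in the paper, this is the residue at the root $\lambda(\xi)=-\mathbf{i}c_*\xi-b\xi^2+\mathbf{i}k\xi^3+O(\xi^4)$ of \eqref{A4}, giving Theorem \ref{thm_3.1}. An absorbing condition breaks translation invariance. For an operator nonlocal in $x$ (through $\K_*$, so the condition lives on a strip of width $R$) and in time (through the delay $i$), the asymptotics of the Dirichlet kernel are not known, as the paper stresses at the start of Section \ref{sec4}.

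The missing idea is the paper's substitute. It takes a whole-line solution issued from data obeying the weighted antisymmetry \eqref{extension} (not an odd extension). This solution has the dipole profile $\frac{x-c_*t}{t^{3/2}}e^{-(x-c_*t)^2/(4bt)}$ on $|x-c_*t|\le t^{1/2+\varsigma}$ (Theorem \ref{thm_Dirichlet heat eqn}). Since this $u$ solves the linear problem exactly, the barriers $\overline v=\overline\xi u+\mathcal V_1+\mathcal V_2$ and $\underline v$ never require Taylor-expanding the operator on an explicit profile. The only defects are the sign change of $u$ near $x\approx c_*t$ and the coupling of the slices $i\in(0,i_\dagger)$ in the boundary condition. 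These are absorbed by the cosine term $\mathcal V_1\approx t^{-3/2+\beta}\cos\big((x-c_*(t-i))/t^\alpha\big)$ and the $i$-dependent factor $\overline\xi(t,i)=1-t^{-\gamma}(1+c_*it^{-\mu})$, under the constraints \eqref{parameters}.

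Your explicit ansatz $(z+z_0)_+e^{-z^2/(Ct)}$ leaves residuals of order $t^{-1}$ at $z=O(1)$. They come from the drift $\frac{3}{2\alpha_*t}\partial_z$ created by the logarithmic shift and from third-order terms of the symbol. They are the same order as the $\frac{3}{2t}$ terms that produce the $t^{-3/2}$ balance, and nothing in your proposal absorbs them. Moreover, as written, $\min\big(\S_0\rho^*,Ae^{-\alpha_*z}\psi\big)$ essentially vanishes for $z<-z_0$, so it is not even above the solution in the bulk.

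Two further ingredients are absent.

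\emph{The far field.} Theorem \ref{thm_3.1} gives no Gaussian information for $|x-c_*t|\ge t^{1/2+\varsigma}$. No Gaussian-tailed function can be a global supersolution: for fixed $t$ the solution is generated by iterating the compactly supported kernel, so it decays only like $e^{-Cx\ln x}$. The paper handles this region with the finite-time exponential tails of Lemma \ref{lem_initial esti}. It also uses the exponential supersolution $t^{-2}U$ of Theorem \ref{thm_exp decay beyond diffusive scale}, which decays like $e^{-\kappa_1(x-c_*t)/\sqrt t}$ and is glued into the barriers through $\mathcal V_2$.

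\emph{The zone $x=c_*t-\frac{3}{2\alpha_*}\ln t+O(1)$.} There the nonlinear term is not small pointwise, since its relative size is $\sim ze^{-\alpha_*z}=O(1)$. So your claim that the subsolution gives $u\ge c$ at $z=O(1)$ again rests on the unavailable Dirichlet estimate. The paper runs its barriers only in $x-c_*(t-i)\ge t^\delta$, where the nonlinearity is $O(e^{-\alpha_*t^\delta})$. Nor can a translate of $\S_0\chi_{c_*}$ be placed below $u$ on the whole line. Its tail $ze^{-\alpha_*z}$ is too heavy for $z\gg\sqrt t$, and $\chi_{c_*}\big(x-c_*t+\frac{3}{2\alpha_*}\ln t\big)$ is not an exact solution. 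The paper compares $t^{3/2}v$ with $\psi_{1,2}$ only on the strip $[X^-(t),X^+(t-i)]$ of width $O(t^\theta)$. It uses the two-boundary maximum principle (Proposition \ref{prop_cp_linear transport_2}), with right-boundary data supplied by the barriers. It then absorbs the $O(t^{\theta-1})$ defect with the decaying barrier $\overline{\mathcal Z}$ (Lemma \ref{lem4.1}).
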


As a consequence we have the
 \begin{thm}
 	\label{thm-1}
 	Let $\rho$ be the solution to problem \eqref{fkpp}. Then
 	for any $\varep>0$, there exist $T(\varep)>i_\dagger$ and $L(\varep)>0$ such that
 	\begin{equation*}
 		\label{thm1-log delay}
 		\rho(t,i,x)> \big(\S_0\rho^*-\varep\big)\pi(i)~~~~\text{for all}~t>T(\varep),~i\in[0,i_\dagger),~x\in\Big[0,c_*t-\frac{3}{2\alpha_*}\ln t-L(\varep)\Big]
 	\end{equation*}
 	and 
 	\begin{equation*}
 		\rho(t,i,x)< \varep\pi(i)~~~~\text{for all}~t>T(\varep),~i\in[0,i_\dagger),~x\in\Big[c_*t-\frac{3}{2\alpha_*}\ln t+L(\varep),+\infty\Big).
 	\end{equation*}
 \end{thm}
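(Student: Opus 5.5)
The plan is to derive Theorem~\ref{thm-1} from Theorem~\ref{thm-1'} through the transport structure. Both claims then reduce to statements about the boundary trace $u(t,x):=\varrho(t,0,x)$, up to $O(1)$ shifts.

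\textbf{Reduction to the boundary trace.} By the characteristics of \eqref{fkpp-auxi}, $\varrho(t,i,x)=u(t-i,x)$ for $t>i$. Since $T(\varep)>i_\dagger$, every $i\in[0,i_\dagger)$ satisfies $t-i\in(t-i_\dagger,t]$. Writing $X(t):=c_*t-\frac{3}{2\alpha_*}\ln t$, we have $X(t-i)=X(t)+O(1)$ uniformly in $i\in[0,i_\dagger)$, because $c_*i\le c_*i_\dagger$ and $\ln(t-i)-\ln t\to0$. It therefore suffices to prove the following two statements for the single function $u$, with the bounded error absorbed into $L(\varep)$:
\[
u(s,x)>\S_0\rho^*-\varep \quad\text{for } 0\le x\le X(s)-L,
\qquad
u(s,x)<\varep \quad\text{for } x\ge X(s)+L.
\]
Multiplying by $\pi(i)$ then gives the claimed bounds on $\rho=\pi\varrho$.

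\textbf{Upper bound ahead of the front.} Take $i_0=0$ and $m=\varep$ in Theorem~\ref{thm-1'}. Since $\K$ is compactly supported and the transport moves at unit speed, the support of $u(s,\cdot)$ grows at most linearly in $s$, so $u(s,x)=0$ for $x$ large. By continuity of $u(s,\cdot)$, and since $E_\varep(s)$ is the largest point where $u=\varep$, we get $u(s,x)<\varep$ for all $x>E_\varep(s)$. Theorem~\ref{thm-1'} gives $E_\varep(s)\le X(s)+C_\varep$, which settles the second inequality.

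\textbf{Lower bound behind the front.} This is the main obstacle: Theorem~\ref{thm-1'} only locates the rightmost point of each level set, and says nothing about oscillations behind it. I would proceed in three steps.

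First, for a fixed small $\delta>0$, the quantity $E_\delta(s)=X(s)+O(1)$ gives a point where $u(s,\cdot)=\delta$. I would propagate this to a whole interval $[x-\ell,x]$ on which $u\ge\delta/2$, using a Harnack-type / equicontinuity estimate. The estimate should come from the $H^1_x$ regularity, which the boundary condition preserves because $\K\in W^{1,1}$.

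Second, I would compare with the solution issued at time $s$ from a compactly supported datum of height $\delta/2$ supported on this interval. The comparison principle holds for the integral (Diekmann--Thieme) formulation, because the nonlinearity is monotone. The spreading result recalled above, applied to this datum, shows that within a bounded time $T_1(\varep,\delta)$ the solution exceeds $\S_0\rho^*-\varep$ on an interval of any prescribed length. Sliding $s$ and using translation invariance in $x$ then covers the region $[X(t)-L_1,\,X(t)-L]$ by intervals where $u\ge \S_0\rho^*-\varep$.

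Third, on the remaining region $0\le x\le ct$ with $c<c_*$, the lower bound comes directly from the spreading property. The intermediate zone $ct\le x\le X(t)-L_1$ is treated by the same invasion argument, applied at earlier times $s<t$ whose front positions $X(s)$ sweep through this zone.

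If the Harnack step proves delicate, I would fall back on a compactness argument. Suppose there were $t_n$ and $x_n\le X(t_n)-L_n$, with $L_n\to\infty$, such that $u(t_n,x_n)\le\S_0\rho^*-\varep$. Translating the solution to $(t_n,x_n)$ would produce an entire solution $w$ with $\liminf w>0$ everywhere. The spreading and invasion properties would force $w\equiv\S_0\rho^*$, which contradicts $w(0,0)\le \S_0\rho^*-\varep$.
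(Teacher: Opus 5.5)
Your plan works, but one step rests on a false claim and another needs an extra ingredient.

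\emph{Front side.} The support of $u(s,\cdot)$ does not grow linearly in $s$; it is all of $\R$ for every $s>0$. The transport acts in $i$, not in $x$. The boundary condition couples $u(t,\cdot)$ to $u(t-i,\cdot)$ for arbitrarily small $i$, and by (vii) $\omega\not\equiv0$ on every interval $(0,h)$. So each convolution with $\K$ widens the support by $R$ at no cost in time. This is the strict positivity in Proposition~\ref{prop_cp_R}, which the paper itself uses as $v(t,0,x)>0$ for all $x\in\R$. Your argument only needs $u(s,x)\to0$ as $x\to+\infty$: then the continuous function $u(s,\cdot)-\varep$, which does not vanish on $(E_\varep(s),+\infty)$, is negative there. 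This decay does hold. Since $1-e^{-a}\le a$, $u$ lies below the solution of the linearized renewal equation. The weighted-$L^\infty$ Picard iteration of Lemma~\ref{lem_initial esti} then gives $u(s,x)\le C_s e^{-\boldsymbol{a}x}$.

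\emph{Rear side.} The state at time $s$ is the whole history $i\mapsto\varrho(s,i,\cdot)=u(s-i,\cdot)$. The boundary condition sees $u$ only through $\int_0^{i_\dagger}\omega(i)\,\K*u(t-i,\cdot)\,\md i$, so a bound at the single instant $s$ seeds nothing. Your comparison datum must be a function of $(i,x)$ lying below $u(s-i,x)$, yet you control $u$ only at $i=0$. You need $u\ge\delta/4$ on $[s-\tau_1,s]\times I$, and a seed supported in $i\in[0,\tau_1]$ with $\tau_1\le\tau_0$, so that (vii) holds for it and the spreading result applies. Both come from uniform Lipschitz bounds for $t>i_\dagger$:
\begin{itemize}
\item in $x$: $|\partial_x u|\le\S_0\|\omega\|_{L^1}\|\K'\|_{L^1}\|\varrho\|_{L^\infty}$. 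This is where $\K\in W^{1,1}$ enters; $H^1_x$ plays no role.
\item in $t$: write the exponent as $\int_\R\omega(t-\tau)\,\K*u(\tau,\cdot)\,\md\tau$, with $\omega$ extended by zero; this extension has bounded variation.
\end{itemize}
With this, invasion plus sliding is correct. In fact the invaded intervals $|x-E_\delta(t-\tau)|\le c\tau$, for $T_1\le\tau\le t-s_0$, already cover all of $[0,X(t)-L]$, so you do not need the separate appeal to spreading on $[0,ct]$. Your compactness fallback is not an independent route. It presupposes the positive lower bound behind the front that the invasion step supplies, and also a Liouville property for entire solutions that is not available here.

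\emph{Comparison with the paper.} The paper gives no separate argument; it asserts that Theorem~\ref{thm-1} follows from Theorem~\ref{thm-1'} via $\rho=\pi\varrho$. The substance behind this is Lemma~\ref{lem4.1}, which sandwiches $\varrho$ between the shifted critical waves $\S_0\chi_{c_*}\big(x-c_*(t-i)+\frac{3}{2\alpha_*}\ln t+\varsigma_n\big)$ in a window of width $t^\theta$ around the front. It is complemented by the barriers of Section~\ref{sec5} ahead of the front and the spreading property far behind. Your reduction to the trace $u$ and your front-side argument are in the same spirit. On the rear side you take a different, more self-contained route: it uses only the location of one small level set plus spreading. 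It also handles the zone $ct\le x\le X(t)-t^\theta$, which neither spreading nor the $t^\theta$-window reaches and which the paper does not treat explicitly. Where the paper's route applies it gives more, namely closeness to the critical profile rather than only to $\S_0\rho^*$.
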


Our starting point is the knowledge from our work \cite{FRZ2023} that model \eqref{fkpp} performs KPP type dynamic properties, and the general two-step PDE argument that we developed in \cite{Roquejoffre2022}, \cite{BFRZ2023} to investigate KPP equations that are discrete or of the nonlocal type. The latter will turn out to be robust enough for hyperbolic evolution problem \eqref{fkpp} considered here, but will not
come without  issues that are specific to the problem. One of them is comparison: while the equations look simple, we have to establish new versions of comparison principles in moving domains with one or two moving boundaries, where, unlike in diffusion equations, the two boundaries do not play the same role.  Once this is under control, a rather serious investigation of the linearized problem with nonlocal spatial effect coming from the boundary $i=0$ is in order. We first have to estimate the heat kernel in the whole domain; in contrast with the Fourier analysis in  \cite{Roquejoffre2022} of nonlocal diffusive KPP equations, we need to combine it with Laplace transform arguments, as the convolutions are not only in space but also in time. 
	However, the quantity of interest is not the heat kernel in the whole domain, but the solution of a nonlinear problem whose solution turns out to look like that of  a linear  Dirichlet boundary value problem set around $x\approx c_*t$ within the diffusive regime. The estimates   are once again specific to the hyperbolic/nonlocal nature of our problem.
	 This Dirichlet solution will need to be sufficiently precise so as to allow the construction of barrier functions tailored for the nonlinear problem, eventually leading us to conclude that  the position of any initially localized solution for the nonlinear transport problem lags behind that of the minimal traveling waves, and this delay grows as $3/(2\alpha_*)\ln t$. 

\begin{rmk}
\textnormal{
 We believe that we can retrieve the large time behavior of the level sets of  $\rho(t,i,x)$ up to $o(1)$ precision. This would entail the analysis of the heat kernel for well spread initial data --  that is, initial data that vary like $x/\sqrt\varepsilon$, where $\varepsilon$ is dictated by  what happened to the solution at former times, as well as a detailed analysis of the barrier functions in that scaling. This is not beyond reach, but we do not perform it here: we want to keep the length of the paper under control. We also believe that, similarly to what is done in \cite{Roquejoffre2022}, we can go back to the original unknowns $S(t,x)$ and $I(t,i,x)$ of the nonlocal Kermack-McKendrick model, and prove that they converge, for large times, to a travelling wave still shifted by the logarithmic term.}
\end{rmk}

\vspace{2mm}
 
\noindent
{\bf Organization of the paper.}  The first issue concerns the set-up of super- and subsolutions, and comparison principles with moving boundaries, all of which will be stated in  Section \ref{sec2}. Section \ref{sec: heat kernel estimate}  is dedicated to a heat kernel asymptotics for the renewal equation \eqref{renewal eqn=3} within the diffusive scale. Section \ref{sec4} is devoted to the approximation of the Dirichlet heat kernel, as well as new type of estimates outside the diffusive zone. In  Section \ref{sec5}, we construct a pair of super- and subsolutions for the nonlinear transport problem \eqref{moving frame-nonlinear} ahead of $x\approx c_*t$. Finally, we prove Theorem \ref{thm-1'} in Section \ref{sec6}, which directly implies  Theorem \ref{thm-1}.
Throughout this paper, we will often write $C$ for a uniform positive constant
that may change from line to line.

\vspace{2mm}

\section{Framework and set-up of comparison principles}
\label{sec2}

By the  method of characteristics, the solution to \eqref{fkpp-auxi} has the form
\begin{align*}
	\varrho(t,i,x)=
	\begin{cases}
		\varrho_0(i-t,x),  &t\le i,~x\in\R,\\
		\Psi(t-i,x), &t>i,~ x\in\R,
	\end{cases}
\end{align*}
with 
\begin{equation*}
	~~~~~~	\Psi(t,x):=\S_0 \left(1-\exp\Big(-\int_0^{i_\dagger} \omega(i) \K*\varrho(t,i,x)\md i \Big)\right),~~~~~~~~~~t>0,~x\in\R.
\end{equation*}

One reason why comparison is much less easy in \eqref{fkpp}, or equivalently \eqref{fkpp-auxi}, than in diffusion equations, is that the region of $t\le i$, where the initial condition is transported along the characteristic line, has to be treated differently from the region of $t>i$. Let us first  recall the definition of super- and subsolutions for \eqref{fkpp-auxi} as well as the comparison principle  Proposition \ref{prop_cp_R} for which we provide a new proof in the Appendix.

\begin{defi}[\cite{FRZ2023}]\label{def_super+sub_R}
	We say that a  bounded and continuous function $\overline \varrho(t,i,x)$ defined for  $(t,i,x)\in\R_+\times[0,i_\dagger)\times\R$  is a
	supersolution of  \eqref{fkpp-auxi} for $(t,i,x)\in\R_+\times[0,i_\dagger)\times\R$, 
	if $\partial_t \overline \varrho(t,i,x)$ and $\partial_i\overline \varrho(t,i,x)$ exist a.e. and satisfy $\partial_t \overline \varrho(t,i,x)  + \partial_i \overline \varrho(t,i,x) \ge 0$ a.e. for $(t,i,x)\in\R_+\times[0,i_\dagger)\times\R$,  as well as the second equation of \eqref{fkpp-auxi} with ``$=$'' replaced by ``$\ge$''   for $t>0$ and $x\in\R$. A subsolution $\underline\varrho$ can be defined in a similar way with both inequality signs above being reversed.
\end{defi}

\begin{prop}[Comparison principle \cite{FRZ2023}]\label{prop_cp_R}
	Let $\overline \varrho$ and $\underline \varrho$ be  bounded and continuous functions  defined for $(t,i,x)\in\R_+\times[0,i_\dagger)\times\R$ such that  $\overline \varrho$ is a nonnegative supersolution and $\underline \varrho$ is a subsolution of \eqref{fkpp-auxi} for $(t,i,x)\in\R_+\times[0,i_\dagger)\times\R$ in the sense of Definition \ref{def_super+sub_R}. Assume that $\overline \varrho (0,i,x)\ge \underline \varrho(0,i,x)$ for $i\in [0,i_\dagger)$ and $x\in\R$, then $\overline \varrho(t,i,x)\ge \underline \varrho(t,i,x)$ for $(t,i,x)\in\R_+\times[0,i_\dagger)\times\R$.  If further 
	 \eqref{cdn-strict positiv} holds true with $\rho_0$ replaced by $(\overline\varrho-\underline\varrho)(0,\cdot,\cdot)$, then $\overline \varrho(t,i,x)> \underline \varrho(t,i,x)$ for $t>i$, $i\in[0,i_\dagger)$ and $x\in\R$.
\end{prop}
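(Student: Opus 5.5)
The natural route is to reduce the claim to a scalar renewal inequality on the boundary trace and then invoke a comparison principle for renewal equations. By Definition \ref{def_super+sub_R}, $\overline\varrho$ is nondecreasing along characteristics, so for $t>i$ we have $\overline\varrho(t,i,x)\ge\overline\varrho(t-i,0,x)$, while for $t\le i$ we have $\overline\varrho(t,i,x)\ge\overline\varrho(0,i-t,x)$. The subsolution satisfies the reverse inequalities. Set $w(t,x):=\overline\varrho(t,0,x)-\underline\varrho(t,0,x)$ for $t>0$. For $t\le i$ the ordering is then immediate from the initial ordering: $\overline\varrho(t,i,x)\ge\overline\varrho(0,i-t,x)\ge\underline\varrho(0,i-t,x)\ge\underline\varrho(t,i,x)$. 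For $t>i$, the same chain shows that it suffices to prove $w\ge0$.

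For $w$, subtract the two boundary inequalities and use the mean value theorem on $s\mapsto\S_0(1-e^{-s})$, whose derivative lies in $(0,\S_0]$ on bounded sets. This gives
\begin{equation*}
w(t,x)\ge a(t,x)\int_0^{i_\dagger}\omega(i)\,\K*(\overline\varrho-\underline\varrho)(t,i,x)\,\md i,
\end{equation*}
with $0<a\le\S_0$ bounded below by a positive constant, since $\overline\varrho$ and $\underline\varrho$ are bounded. Split the $i$-integral at $i=t$ and use the characteristic inequalities above. The part $i<t$ is bounded below by $\int_0^{t}\omega(i)\K*w(t-i,\cdot)\,\md i$, and the part $i\ge t$ by the nonnegative initial contribution $h(t,x):=\int_t^{i_\dagger}\omega(i)\K*(\overline\varrho-\underline\varrho)(0,i-t,x)\,\md i\ge0$. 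Hence
\begin{equation*}
w\ge a\big(h+\textstyle\int_0^t\omega(i)\K*w(t-i,\cdot)\md i\big).
\end{equation*}

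To conclude $w\ge0$, apply the renewal comparison to $w^-=\max(-w,0)$. Where $w<0$ we get $w^-\le \S_0\int_0^t\omega(i)\K*w^-(t-i)\,\md i$, because $a\le\S_0$, $h\ge0$, and $\K,\omega\ge0$. Take sup norms in $x$ and write $M(t)=\sup_{s\le t,x}w^-(s,x)$. Since $\|\K\|_{L^1}=1$, this yields $M(t)\le \S_0\|\omega\|_\infty\, t\, M(t)$, so $M\equiv0$ for $t<T_0:=1/(2\S_0\|\omega\|_\infty)$. Iterating on intervals of length $T_0$ then gives $w\ge0$ for all $t>0$. The sup is finite because both functions are bounded. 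This iteration is the main obstacle: the step length is uniform precisely because $\omega$ is bounded and $\K$ has mass one.

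For strict positivity, we have $w\ge a h+a\int\omega\K*w$ with $a\ge a_0>0$. Condition \eqref{cdn-strict positiv} for $\overline\varrho-\underline\varrho$ at time $0$ gives $h>0$ on some set for small $t$, since $\tau>0$ on $[0,\tau_0]$. Because $\K$ is even with spt $[-R,R]$, each convolution spreads positivity by at least a fixed distance in $x$. Iterating the renewal inequality, as in \cite{FRZ2023}, shows $w(t,x)>0$ for all $t>0$ and $x$. Finally, $\overline\varrho(t,i,x)-\underline\varrho(t,i,x)\ge w(t-i,x)>0$ for $t>i$.
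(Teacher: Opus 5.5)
Your proof is correct in outline, but it takes a different route from the paper.

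\textbf{Weak inequality.} The paper does not use characteristics here. With $z=\underline\varrho-\overline\varrho$, it uses the crude bound $z^+(t,0,x)\le\S_0\int_0^{i_\dagger}\omega\,\K*z^+\,\md i$. It then integrates the transport inequality over the age variable, so the boundary value enters through $\int_0^{i_\dagger}\partial_i z^+\,\md i$, integrates in time, and applies Gronwall to $\sup_x\int_0^{i_\dagger}z^+(t,i,x)\,\md i$. You instead reduce to the boundary trace $w$ along characteristics and run Gronwall on a Volterra inequality for $w^-$. This is exactly the mechanism the paper itself uses for Propositions~\ref{prop_mp} and \ref{prop_cp_linear transport_2}. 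Keeping the exact mean-value factor $a\ge a_0>0$ and the nonnegative source $h$ also gives you an inequality you can reuse for strictness. One small correction: $a\le\S_0$ is false globally when $\underline\varrho$ takes negative values. On $\{w<0\}$, where you use it, the intermediate point lies between $\int\omega\,\K*\overline\varrho\ge0$ and the strictly larger $\int\omega\,\K*\underline\varrho$, so the bound holds there; in any case any finite bound would suffice.

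\textbf{Strict inequality.} Here the routes differ more. The paper argues by contradiction on the full function. From $z(t_0,0,x_0)=0$ and $z\le0$ it gets $z(t_0,\cdot,\cdot)=0$ on $[0,\tau_0]\times[x_0-R,x_0+R]$, and continuity in $i$ returns this to $i=0$. Repeating at the \emph{same} time $t_0$ gives $z(t_0,\cdot,\cdot)\equiv0$ on $[0,\tau_0]\times\R$. Backward characteristics in steps of $\tau_0$ then contradict \eqref{cdn-strict positiv} at $t=0$.

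You have replaced the age profile by $w(t-i,\cdot)$, so every spatial step of length below $R$ in your scheme costs a time shift $i$ with $\omega(i)>0$. That is precisely what ``iterating \ldots\ as in \cite{FRZ2023}'' leaves unexplained. The argument does go through, but it should be written out:
\begin{itemize}
\item If $s>0$, $w(s,y)>0$, $\omega(i)>0$ and $\K(d)>0$, then $w(s+i,y+d)>0$, by continuity of $w$ and openness of $\{\omega>0\}$ and $\{\K>0\}$.
\item The set $\{\omega>0\}$ is dense in $[0,\tau_0]$, so it contains arbitrarily small ages, and $\{\K>0\}$ is dense in $[-R,R]$. Hence their $n$-fold sums fill $(0,n\tau_0)$ and $(-nR,nR)$.
\item Base points with $h(s_0,y_0)>0$ exist for every $s_0\in(0,\tau_0)$, although at an uncontrolled location $y_0$.
\end{itemize}
Chaining these three facts reaches every $(t,x)$ with $t>0$.

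Alternatively, you can transplant the paper's contradiction to the trace. A zero of $w$ at $(t_0,x_0)$ forces $w=0$ on a region spreading backward in time and outward in space, and this region covers $(0,t_0)\times\R$. Then $h(s,\cdot)\equiv0$ for small $s$, which contradicts \eqref{cdn-strict positiv}.
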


\subsubsection*{The reduced equation in the zone $\{x\approx c_*t\}$ and its linerization}

One specific feature of the problem is that the appropriate space variable to consider is $x-c_*(t-i)$, something that does not always interact well with the boundary condition.   So, we set
\begin{equation}\label{def_v}
	\varrho(t,i,x)=v(t,i,x)e^{-\alpha_*(x-c_*(t-i))},~~~~~~~~t>0, \quad  i\in[0,i_\dagger), \quad x\in\R,
\end{equation}
then $v$ satisfies
\begin{equation}
	\label{moving frame-nonlinear}
	\begin{aligned}
\!\!\!\!\!\left\{\begin{split}
			&\partial_t v(t,i,x)  + \partial_i v(t,i,x) =0,~~~~~~~~~~~~~~~~~~~~~~~~~~~~~~~~~~~~~~~~~~~~~~~~~~~~~~~~ t>0,~ i\in(0,i_\dagger), ~ x\in\R, \\
		&	v(t,0,x)=e^{\alpha_*(x-c_*t)}\S_0\!\left(\!1\!-\exp\!\Big(\!\!-\! e^{-\alpha_*(x-c_*t)}\! \int_0^{i_\dagger}\! \omega(i) e^{-\alpha_*c_* i} \K_* *v(t,i,x)\md i\Big)\!\!\right)\!,~~ ~t> 0,~ x\in\R,\\
		&	v(0,i,x)= \varrho_0(i,x)e^{\alpha_*x+\alpha_*c_* i}=:v_0(i,x),~~~~~~~~~~~~~~~~~~~~~~~~~~~~~~~~~~~~~~~~~~~~~~~~~  i\in[0,i_\dagger),~ x\in\R,
	\end{split}\right.
	\end{aligned}
\end{equation}
with $\K_*(x)=\K(x)e^{\alpha_* x}$ for $x\in\R$.

Learning from our previous work \cite{FRZ2023} that there is the KPP type mechanism behind this transport problem which is particularly amenable to linearized analysis, we therefore
begin with the linearization of \eqref{moving frame-nonlinear} around the unstable state zero:
\begin{equation}
	\label{moving frame-u}
	\left\{\begin{split}
		&	\partial_t u(t,i,x)  + \partial_i u(t,i,x) =0,~~~~~~~~~~ t>0, ~ i\in(0,i_\dagger), ~ x\in\R, \\
		&	u(t,0,x)=\S_0 \int_0^{i_\dagger} \omega(i) e^{-\alpha_*c_* i} \K_* *u(t,i,x)\md i, ~ t> 0,  ~ x\in\R,
	\end{split}\right.
\end{equation} 
with $u(0,i,x)= u_0(i,x)$ which is absolutely continuous and compactly supported for $(i,x)\in [0,i_\dagger)\times\R$.

In what follows, we establish new versions of comparison principles in moving domains with one or two moving boundaries, which will fit better with our needs. This preliminary step as already addressed in the introduction is not immediate but rather  delicate, for which  the dynamical mechanism of the solution -  the roles of $t$ and $i$ - should be understood.

\subsubsection*{Super- and subsolutions}
 To start with, let us give the definition of super- and subsolutions for the linear problem  \eqref{moving frame-u} and for the nonlinear problem \eqref{moving frame-nonlinear} in moving domains. 
\begin{defi}
	\label{Def_super+sub} 	For any fixed $t_*\ge 0$ and for any  continuous  function $X(t,i)$ defined for $t\ge i+t_*$ and $i\in[0,i_\dagger)$,
	we say that a  bounded and continuous function $\underline u(t,i,x)$ defined for $t\ge i+t_*$, $i\in[0,i_\dagger)$ and  $x\ge X(t,i)-R$ is a
	subsolution of   \eqref{moving frame-u} for $t\ge i+i_\dagger+t_*$, $i\in[0,i_\dagger)$ and $x\ge X(t,i)$,
	if $\partial_t \underline u(t,i,x)$ and $\partial_i\underline u(t,i,x)$ exist a.e. such that 
		\begin{equation*}
		\begin{aligned}
			\left\{\begin{split}
				&	\partial_t \underline u(t,i,x)  + \partial_i \underline u(t,i,x) \le 0,~~~~~~~ ~~~~~~~~~~~~t\ge i+t_*, ~i\in(0,i_\dagger), ~x\ge X(t,i)-R, \\
				&	\underline u(t,0,x)\le \S_0 \int_0^{i_\dagger} \omega(i) e^{-\alpha_*c_* i} \K_* *\underline u(t,i,x)\md i,  ~~~~~~~~~~~~~~~t\ge  i_\dagger+t_*,   ~x\ge X(t,0).
			\end{split}\right.		
		\end{aligned}
	\end{equation*}
A supersolution $\overline u$ to \eqref{moving frame-u} for $t\ge i+i_\dagger+t_*$, $i\in[0,i_\dagger)$ and $x\ge X(t,i)$ can be defined in the same way with the inequality signs being reversed.

	Analogously, 	for any fixed $t_*>0$ large and for any  continuous and nondecreasing  functions $X^\pm(s)$ defined for $s\ge t_*$ such that $X^-(s)\le X^+(s-i_\dagger)$ for all $s\ge t_*$,  a  bounded and continuous function $\underline u(t,i,x)$ defined for $t\ge i+t_*$, $i\in[0,i_\dagger)$ and  $X^-(t)-R\le x\le X^+(t)+R$ is called a
	subsolution of \eqref{moving frame-u}   for $t\ge i+i_\dagger+t_*$, $i\in[0,i_\dagger)$ and $x\in[X^-(t), X^+(t-i)]$,
	if $\partial_t \underline u(t,i,x)$ and $\partial_i\underline u(t,i,x)$ exist a.e. such that
\begin{equation*}
\begin{aligned}
	\left\{\begin{split}
		&	\partial_t \underline u(t,i,x)  + \partial_i \underline u(t,i,x) \le 0,~~~~~~~~~~~~~~~~~~~~~t\ge i+t_*, ~i\in(0,i_\dagger), ~X^-(t)\le x\le X^+(t-i), \\
		&\underline	u(t,0,x)\le \S_0 \int_0^{i_\dagger} \omega(i) e^{-\alpha_*c_* i} \K_* *\underline u(t,i,x)\md i,  ~~~~~~~~~~~~~~~~t\ge t_*+ i_\dagger,   ~X^-(t)\le x\le X^+(t).
	\end{split}\right.		
\end{aligned}
\end{equation*}
A supersolution $\overline u$ to \eqref{moving frame-u} for $t\ge i+i_\dagger+t_*$, $i\in[0,i_\dagger)$ and $x\in[X^-(t), X^+(t-i)]$ can be defined in the same way with the inequality signs being reversed.
	
	For each case above,  super- and subsolutions for \eqref{moving frame-nonlinear} can be defined in a similar way.
\end{defi}

\subsubsection*{Comparison principles}

  Our comparison principles for the nonlinear problem \eqref{moving frame-nonlinear} with one or two moving boundaries state as follows.  
  \begin{prop}[Comparison principle with one boundary]\label{prop_cp_nonlinear transport}
  	For any fixed $t_*\ge 0$ and for any  continuous  function $X(t,i)$ defined for $t\ge i+t_*$ and $i\in[0,i_\dagger)$  satisfying $X(t,i)$ for each $i\in[0,i_\dagger)$ is nondecreasing in $t\in(i+t_*,+\infty)$ and  $X(t,0)\ge \max_{s\in[0,i]}X(t-i+s,s)$,
  	let $\overline v(t,i,x)$ and $\underline v(t,i,x)$ defined for
  	 $t\ge i+t_*$, $i\in[0,i_\dagger)$ and $x\ge X(t,i)-R$
  	 be  respectively a  super- and  a subsolution of \eqref{moving frame-nonlinear} for $t\ge i+i_\dagger+t_*$, $i\in[0,i_\dagger)$ and $x\ge X(t,i)$, in the sense of Definition \ref{Def_super+sub}. Assume   that $\overline v(t,0,x)\ge \underline v(t,0,x)$  for $t\in [ t_*,i_\dagger+t_*]$ and $x\ge X(t,0)$, that $\overline v(t,0,x)\ge \underline v(t,0,x)$  for $t\ge t_*$ and $x\in[X(t,0)-R,X(t,0)]$, and $\overline v(t,i,X(t,i))\ge\underline v(t,i,X(t,i))$ for $t\ge i+i_\dagger+t_*$ and $i\in[0,i_\dagger)$,
  	 then $\overline v(t,i,x)\ge \underline v(t,i,x)$  for $t\ge i+i_\dagger+t_*$, $i\in[0,i_\dagger)$ and $x\ge X(t,i)$. 
  	  \end{prop}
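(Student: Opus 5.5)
Since the transport part is trivial along characteristics, I plan to reduce the comparison to the boundary values $V(t,x):=\overline v(t,0,x)$ and $W(t,x):=\underline v(t,0,x)$. From $\partial_t\overline v+\partial_i\overline v\ge 0\ge\partial_t\underline v+\partial_i\underline v$, integrating along the characteristic $s\mapsto(t-i+s,s,x)$, $s\in[0,i]$, gives for $t\ge i+t_*$
\[
\overline v(t,i,x)\ge V(t-i,x),\qquad \underline v(t,i,x)\le W(t-i,x).
\]
This is valid provided the whole characteristic segment lies in the domain $x\ge X(t-i+s,s)-R$. This is where the hypothesis $X(t,0)\ge\max_{s\in[0,i]}X(t-i+s,s)$, applied at time $t-i$, enters. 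Combined with the monotonicity of $X(\cdot,i)$ in $t$, it ensures that if $x\ge X(t-i,0)$, then every point of the characteristic through $(t-i,0,x)$ stays in the region $x\ge X(\cdot,\cdot)$. So the interior comparison reduces to $V\ge W$ on $\{t\ge t_*,\ x\ge X(t,0)\}$. The band $[X(t,0)-R,X(t,0)]$ is controlled by assumption, and it is exactly what is needed so that the convolution $\K_**$, which has support in $[-R,R]$, only sees points where the comparison is known.

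The main step is to prove $V\ge W$ for $t\ge t_*+i_\dagger$ and $x\ge X(t,0)$; it holds on $[t_*,t_*+i_\dagger]$ by assumption. I would argue by a continuation in time, using the fact that the nonlinearity
\[
G(t,x,p):=e^{\alpha_*(x-c_*t)}\S_0\bigl(1-e^{-e^{-\alpha_*(x-c_*t)}p}\bigr)
\]
is increasing and globally Lipschitz in $p$, with constant $\S_0$. Fix $T$ and set $M(t):=\sup_{x\ge X(t,0)}(W-V)^+(t,x)$. Both functions are bounded, so $M$ is finite. For $t\ge t_*+i_\dagger$ and $x\ge X(t,0)$, the boundary inequalities and monotonicity give
\[
W-V\le \S_0\int_0^{i_\dagger}\omega(i)e^{-\alpha_*c_*i}\K_**(\underline v-\overline v)^+(t,i,\cdot)(x)\,\md i .
\]
For $y\in[x-R,x+R]$ I then distinguish two cases, and this is the delicate bookkeeping. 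In the first case $y\ge X(t-i,0)$, and the characteristic bound gives $(\underline v-\overline v)(t,i,y)\le M(t-i)$. In the second case $y<X(t-i,0)$; since $X(\cdot,0)$ is nondecreasing, $y\ge x-R\ge X(t,0)-R\ge X(t-i,0)-R$, so $y$ lies in the band, where $(W-V)(t-i,y)\le 0$ by hypothesis. In both cases, by positivity of the integrand,
\[
M(t)\le \S_0\int_0^{i_\dagger}\omega(i)e^{-\alpha_*c_*i}\Bigl(\int\K_*\Bigr)\,M(t-i)\,\md i=\int_0^{i_\dagger}k(i)M(t-i)\,\md i,
\]
with total mass $\int_0^{i_\dagger}k=1+D_{c_*}(\alpha_*)=1$ by \eqref{dr}. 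Here $M=0$ on $[t_*,t_*+i_\dagger]$.

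The obstacle is that the renewal kernel has mass exactly $1$, so the Volterra inequality does not immediately force $M\equiv0$; a naive Gronwall argument fails. To get around this, I would compare $\overline v$ with $\overline v+\varepsilon e^{\lambda t}$, or, more in keeping with the characteristic structure, $\overline v+\varepsilon e^{\lambda(t-i)}$. Since $\partial_t+\partial_i$ annihilates functions of $t-i$, this is still a supersolution in the interior. At the boundary, monotonicity of $G$ gives only a defect of order $\varepsilon e^{\lambda t}\bigl(1-\int_0^{i_\dagger}k(i)e^{-\lambda i}\,\md i\bigr)\ge0$ for $\lambda>0$, up to the Lipschitz error, which has the right sign once one uses concavity of $G$ in $p$ at the margin. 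Rather than push this through, I would run the argument directly on $M$, which is cleaner. Let $t_1$ be the first time $M>0$ and take $t\in[t_1,t_1+\delta]$. Then $M(t)\le\int_0^{t-t_1}k(i)M(t-i)\,\md i\le \|k\|_{L^1(0,\delta)}\sup_{[t_1,t_1+\delta]}M$. Because $k$ is bounded, this coefficient is strictly less than $1$ for small $\delta$, which forces $M\equiv0$ on $[t_1,t_1+\delta]$, a contradiction. Hence $V\ge W$ for all $t\ge t_*$, and the characteristic step then gives $\overline v\ge\underline v$ on $t\ge i+i_\dagger+t_*$, $x\ge X(t,i)$, using the assumed comparison on the lateral boundary $x=X(t,i)$ for the points where a characteristic enters through it.
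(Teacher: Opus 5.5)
Your proposal is correct and is essentially the paper's proof. The paper proves the linear maximum principle, Proposition~\ref{prop_mp}, and obtains this statement by linearizing the monotone boundary nonlinearity, with the same ingredients as yours: reduction along characteristics to the trace at $i=0$, the band $[X(t,0)-R,X(t,0)]$ together with monotonicity of $X(\cdot,0)$ to handle the $R$-neighbourhood seen by $\K_*$, a Volterra inequality for $M(t)=\sup_{x\ge X(t,0)}(W-V)^+(t,x)$, and the lateral boundary $x=X(t,i)$ for characteristics that enter through it. Two corrections: Gronwall does not fail, because $k$ is bounded and so $\int_0^{i_\dagger}k(i)M(t-i)\,\md i\le\|k\|_\infty\int_{t_*+i_\dagger}^{t}M(\tau)\,\md\tau$ (this is exactly how the paper concludes, and your first-time argument is the same estimate); and the hypothesis $X(t,0)\ge\max_{s\in[0,i]}X(t-i+s,s)$ keeps the characteristic ending at $(t,i,y)$ inside the domain when $y\ge X(t,0)-R$ (the terminal time), not when $y\ge X(t-i,0)$ as your first paragraph states---though your main step does use the correct version.
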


 \begin{prop}[Comparison principle with two boundaries]
 	\label{prop_cp_nonlinear transport2} 
 	For any fixed $t_*>0$ large and for any  continuous and nondecreasing  functions $X^\pm(s)$ defined for $s\ge t_*$ such that $X^-(s)<X^+(s-i_\dagger)$ for all $s\ge t_*$, 	let $\overline v(t,i,x)$ and $\underline v(t,i,x)$ defined for
 	$t\ge i+t_*$, $i\in[0,i_\dagger)$ and $X^-(t)-R\le x\le X^+(t)+R$
 	be  respectively a  super- and  a subsolution of \eqref{moving frame-nonlinear} for $t\ge i+i_\dagger+t_*$, $i\in[0,i_\dagger)$ and $x\in[X^-(t), X^+(t-i)]$, in the sense of Definition \ref{Def_super+sub}.
 	Assume that $\overline v(t,0,x)\ge \underline v(t,0,x)$ for $t\in [ t_*,i_\dagger+t_*]$ and $x\in[X^-(t), X^+(t)]$, and that  $\overline v(t,i,x)\ge\underline v(t,i,x)$ for $t\ge i+t_*$, $i\in[0,i_\dagger)$ and $x\in[X^-(t)-R,X^-(t)]\cup[X^+(t-i),X^+(t)+R]$,  then $\overline v(t,i,x)\ge \underline v(t,i,x)$  for $t\ge i+i_\dagger+t_*$, $i\in[0,i_\dagger)$ and $x\in[X^-(t), X^+(t-i)]$. 
 \end{prop}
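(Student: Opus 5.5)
We prove Proposition \ref{prop_cp_nonlinear transport2} by reducing everything to the boundary trace $i=0$ and then running a first-touching-time argument on that trace, using that the transport equation only moves information along characteristics.

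\textbf{Step 1: reduction to the trace.} Set $w:=\overline v-\underline v$. Along a characteristic $s\mapsto(s,s-t+i,x)$ we have $\partial_t w+\partial_i w\ge 0$ a.e., since $\overline v$ is a supersolution and $\underline v$ a subsolution. For $t\ge i+i_\dagger+t_*$ the characteristic through $(t,i,x)$ starts at $(t-i,0,x)$, and $t-i\ge t_*+i_\dagger$. Because $X^+$ is nondecreasing, the point $x\le X^+(t-i)$ stays in the admissible window $[X^-(s),X^+(s-(s-t+i))]=[X^-(s),X^+(t-i)]$ along the whole characteristic. Here we use that $X^-$ is nondecreasing and that $x\ge X^-(t)\ge X^-(s)$ for $s\le t$. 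Hence
\[
w(t,i,x)\ge w(t-i,0,x).
\]
So it suffices to show $w(s,0,x)\ge 0$ for $s\ge t_*+i_\dagger$ and $x\in[X^-(s),X^+(s)]$. The hypothesis already gives this for $s\in[t_*,t_*+i_\dagger]$, and together with the same characteristic argument it gives $w\ge0$ on $\{i+t_*\le t\le i+t_*+i_\dagger\}$ in the window.

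\textbf{Step 2: the boundary inequality.} Write
\[
G(t,x,p)=e^{\alpha_*(x-c_*t)}\S_0\left(1-\exp\left(-e^{-\alpha_*(x-c_*t)}p\right)\right),
\]
which is nondecreasing and Lipschitz in $p$ with constant $\S_0$. The boundary conditions give
\[
w(t,0,x)\ge G(t,x,P_{\overline v})-G(t,x,P_{\underline v}),\qquad P_v:=\int_0^{i_\dagger}\omega(i)e^{-\alpha_*c_*i}\,\K_**v(t,i,x)\,\md i .
\]
The convolution $\K_**w(t,i,x)$ samples $w(t,i,y)$ for $|y-x|\le R$. For $x\in[X^-(t),X^+(t)]$ these values $y$ lie either in the interior window $[X^-(t),X^+(t-i)]$ or in one of the strips $[X^-(t)-R,X^-(t)]$, $[X^+(t-i),X^+(t)+R]$. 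On the strips $w\ge0$ by hypothesis. This is exactly where the two boundaries play different roles: the right strip depends on $i$ because of the transport lag.

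\textbf{Step 3: contradiction via an exponential weight.} Suppose $w$ becomes negative on the trace, and let $\bar t>t_*+i_\dagger$ be the infimum of the times where this happens. To avoid taking an infimum over a noncompact set, we work with $\widetilde w=e^{-\lambda t}w$ for large $\lambda$, or compare against $\min(w,0)$. By Step 1, $w(t,i,y)$ for $t\le\bar t$ is bounded below by the trace value at time $t-i\le\bar t$. Set $m(t)=\sup_{s\le t,x}w^-(s,0,x)$ over the window. The Lipschitz bound on $G$ and $\omega$ bounded give
\[
w^-(t,0,x)\le \S_0\int_0^{i_\dagger}\omega(i)e^{-\alpha_*c_*i}\|\K_*\|_{L^1}\, m(t-i)\,\md i .
\]
This is a renewal-type inequality. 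Since $m\equiv0$ for $t\le\bar t$, we obtain $m(t)\le C\int_{0}^{t-\bar t}m(t-i)\,\md i$ for $t$ near $\bar t$. Gronwall on the short interval $[\bar t,\bar t+\delta]$ yields $m\equiv0$ there, contradicting the definition of $\bar t$. Boundedness of $\overline v,\underline v$ keeps $m$ finite. Continuity handles the compactness, since the window is compact for each time.

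\textbf{Main obstacle.} The delicate point is the bookkeeping in Step 2. We must check that every sample point $(t,i,y)$ entering the convolution at a boundary point $x\in[X^-(t),X^+(t)]$ is either covered by the strip hypotheses or reachable by a characteristic from a trace point inside the window at an earlier time. This uses $X^-(s)<X^+(s-i_\dagger)$, which keeps the window nonempty, together with the monotonicity of $X^\pm$. The one-boundary Proposition \ref{prop_cp_nonlinear transport} follows the same scheme, with the condition $X(t,0)\ge\max_{s\in[0,i]} X(t-i+s,s)$ replacing this check.
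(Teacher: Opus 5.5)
Your proposal is correct and follows essentially the paper's route: the characteristic reduction $w(t,i,x)\ge w(t-i,0,x)$, the splitting of $\K_* * w$ into the two strips and the interior window, and a renewal/Gronwall inequality for the supremum of $w^-$ on the trace, with the nonlinearity absorbed by the monotonicity of $G$ exactly as the paper's ``linearization'' remark intends. One small correction: $\partial_pG=\S_0\exp\big(-e^{-\alpha_*(x-c_*t)}p\big)$ is bounded by $\S_0$ only for $p\ge0$ (for instance when $\overline v\ge0$), so in general you need the locally bounded Lipschitz constant that boundedness of $\overline v,\underline v$ and $x\ge X^-(t)$ provide on bounded time intervals, which is all your short-time Gronwall step uses.
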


\begin{rmk}
\textnormal{	In the application of Proposition \ref{prop_cp_nonlinear transport}, when verifying the boundary conditions at $i=0$ and $x=X(t,i)$, it will be sufficient to have $\overline v(t,i,x)\ge \underline v(t,i,x)$ for $t\ge i+t_*$, $i\in[0,i_\dagger)$ and $x\in[X(t,i)-R,X(t,i)]$.
	The above comparison results will also apply for  generalized super- and subsolutions which are, respectively, given by the minimum of supersolutions and the maximum of subsolutions.}
\end{rmk}

The main step here will be to establish the maximum principles for the linear transport problem \eqref{moving frame-u} with single or two moving boundaries.  The comparison principles for the nonlinear problem \eqref{moving frame-nonlinear} follow immediately by linearization.

\begin{prop}[Maximum principle with one boundary for \eqref{moving frame-u}]
	\label{prop_mp}
	For any fixed $t_*\ge 0$ and for any  continuous  function $X(t,i)$ defined for $t\ge i+t_*$ and $i\in[0,i_\dagger)$  satisfying $X(t,i)$  for each $i\in[0,i_\dagger)$ is nondecreasing in $t\in(i+t_*,+\infty)$, and  $X(t,0)\ge \max_{s\in[0,i]}X(t-i+s,s)$, let $u(t,i,x)$ be a  bounded and continuous  function defined for $t\ge i+t_*$, $i\in[0,i_\dagger)$ and  $x\ge X(t,i)-R$,  such that
	\begin{equation}
		\label{mp-u}
		\begin{aligned}
			\left\{\begin{split}
				&	\partial_t u(t,i,x)  + \partial_i u(t,i,x) \le 0,~~~~~~~ ~~~~~~~~~~~~~~~~~t\ge i+t_*, ~i\in(0,i_\dagger), ~x\ge X(t,i) -R, \\
				&	u(t,0,x)\le \S_0 \int_0^{i_\dagger} \omega(i) e^{-\alpha_*c_* i} \K_* *u(t,i,x)\md i,  ~~~~~~~~~~~~~~~~~~~t\ge  i_\dagger+t_*,   ~x\ge X(t,0).
			\end{split}\right.		
		\end{aligned}
	\end{equation}
	Assume further that $u(t,0,x)\le0$ for $t\in [ t_*,i_\dagger+t_*]$ and $x\ge X(t,0)$, that\footnote{For this boundary assumption, it will be sufficient to assume: $u(t,i,x)\le 0$ for $t\ge i+t_*$, $i\in[0,i_\dagger)$ and $x\in[X(t,i)-R,X(t,i)]$.}  $u(t,0,x)\le 0$ for $t\ge t_*$ and $x\in[X(t,0)-R,X(t,0)]$, and $u(t,i,X(t,i))\le 0$ for $t\ge i+i_\dagger+t_*$ and $i\in[0,i_\dagger)$, then $u(t,i,x)\le 0$  for $t\ge i+i_\dagger+t_*$, $i\in[0,i_\dagger)$ and $x\ge X(t,i)$. 
\end{prop}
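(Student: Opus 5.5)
Throughout, the positive part of $u$ will be controlled through its values on the boundary line $i=0$. By the first inequality in \eqref{mp-u}, $u$ is nonincreasing along characteristics $s\mapsto (t_0+s,s,x)$. Hence for $t\ge i+t_*$ and $x\ge X(t,i)-R$ we have $u(t,i,x)\le u(t-i,0,x)$, provided the whole characteristic segment stays in the domain, i.e. $x\ge X(t-i+s,s)-R$ for $s\in[0,i]$. This is exactly what the hypothesis $X(t-i,0)\ge\max_{s\in[0,i]}X(t-i+s,s)$ (applied at time $t-i$) guarantees when $x\ge X(t-i,0)-R$. The monotonicity of $X(\cdot,i)$ in $t$ also ensures that a point with $x\ge X(t,i)$ traced back in time does not exit through the lateral boundary. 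So the plan is to reduce everything to the scalar function $w(t,x):=u(t,0,x)$ and show $w\le 0$ for $t\ge t_*$, $x\ge X(t,0)-R$.

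\textbf{Step 1 (renewal inequality).} For $t\ge t_*+i_\dagger$ and $x\ge X(t,0)$, the boundary inequality combined with transport monotonicity gives
\[
w(t,x)\le \S_0\int_0^{i_\dagger}\omega(i)e^{-\alpha_*c_*i}\,\K_* * u(t,i,\cdot)(x)\,\md i\le \S_0\int_0^{i_\dagger}\omega(i)e^{-\alpha_*c_*i}\int_{\R}\K_*(y)\,w(t-i,x-y)\,\md y\,\md i .
\]
Here $x-y\ge X(t,0)-R\ge X(t-i,0)-R$, since $|y|\le R$ and $X(\cdot,0)$ is nondecreasing. We use $\omega,\K_*\ge0$, and the fact that $t-i\ge t_*$, so the characteristic from $(t-i,0,x-y)$ to $(t,i,x-y)$ is admissible. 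On the strip $x\in[X(t,0)-R,X(t,0)]$ we know $w\le0$ by assumption, and for $t\in[t_*,t_*+i_\dagger]$ likewise $w\le0$. The boundary condition $u(t,i,X(t,i))\le0$ is only needed to make the reduction legitimate at the lateral boundary for $i>0$; for the pure comparison on $w$, the strip assumption at $i=0$ already suffices.

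\textbf{Step 2 (contradiction via a weight).} Since the kernel $\S_0\omega(i)e^{-\alpha_*c_*i}\K_*(y)$ has total mass exactly $1$ by the dispersion relation at $(c_*,\alpha_*)$, the renewal operator is critical. A plain sup argument therefore does not immediately give strict decrease. I would fix a finite horizon $T$ and set $M(T):=\sup\{w^+(t,x): t\le T,\ x\ge X(t,0)-R\}$, which is finite by boundedness of $u$. Suppose $M(T)>0$. Because $x$ is unbounded, the supremum need not be attained. I would therefore use an exponential weight: set $\tilde w=w\,e^{-\varepsilon x}$, or alternatively compare with $\delta e^{\lambda t}$ after perturbing the kernel. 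Replacing $\K_*$ by $\K(y)e^{(\alpha_*-\varepsilon)y}$ makes the total mass $<1$ when $\varepsilon$ is small, because $\alpha\mapsto$ the $\alpha$-part of $D_{c_*}$ has its minimum structure at $\alpha_*$. More concretely, let $\phi(t,x)=e^{-\beta(x-c_*t)}$ with $\beta\ne\alpha_*$ slightly, chosen so that $\S_0\int\omega e^{-\alpha_*c_*i}\int\K_*(y)e^{\beta y}e^{-\beta c_* i}\,\md y\,\md i<1$. This quantity equals $D_{c_*}(\alpha_*+\beta)+1$, and since $c(\alpha)>c_*$ for $\alpha\ne\alpha_*$ we get $D_{c_*}(\alpha_*+\beta)>0$; so the correct choice is instead to use the time growth $e^{\lambda t}$: pick $\lambda>0$ small and the weight $e^{-\lambda t}$. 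The transformed kernel then has mass $\S_0\int\omega e^{-(\alpha_*c_*+\lambda)i}\int\K_*<1$. Now $\hat w:=w e^{-\lambda t}$ satisfies a subcritical renewal inequality. Its positive supremum over $t\le T$ cannot be approached at boundary points where it is $\le0$, and at interior near-maximizing sequences the inequality yields $\sup\hat w^+\le\theta\sup\hat w^+$ with $\theta<1$, forcing $\sup\hat w^+=0$. Letting $T\to\infty$ is harmless.

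\textbf{Step 3 (conclusion).} Having $w\le0$ on $t\ge t_*$, $x\ge X(t,0)-R$, transport monotonicity and the condition on $X$ give $u(t,i,x)\le w(t-i,x)\le0$ for $t\ge i+i_\dagger+t_*$ and $x\ge X(t,i)$. Points whose backward characteristic meets the lateral boundary $x=X(\cdot,\cdot)$ before $i=0$ are handled by $u(t,i,X(t,i))\le0$ together with monotonicity along the characteristic. I expect the main obstacle to be Step 1's geometric bookkeeping: checking that every point $(t-i,0,x-y)$ produced by the convolution lies either in the region where $w\le0$ is assumed or in the controlled region. This is where $X(t,0)\ge\max_s X(t-i+s,s)$ and the monotonicity in $t$ are essential.
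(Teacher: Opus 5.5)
Your proof is correct in substance, and its key step takes a different route from the paper's. The two reductions are the same: monotonicity along characteristics gives $u(t,i,x)\le u(t-i,0,x)$, and this yields a renewal inequality for $w=u(\cdot,0,\cdot)$ on $x\ge X(t,0)$.

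From there the paper bounds $\S_0\omega(i)e^{-\alpha_*c_*i}$ by a constant. It rewrites $\int_0^{i_\dagger}\sup_x w^+(t-i,\cdot)\,\md i$ as $\int_{t-i_\dagger}^{t}\sup_x w^+(\tau,\cdot)\,\md \tau$ and applies Gronwall, using $w\le 0$ on $[t_*,t_*+i_\dagger]$. The normalization of the kernel is never used.

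Your $e^{-\lambda t}$-weighted contraction is in effect a Bielecki-norm proof of that same Gronwall step. It is a one-line supremum bound, it needs no time integration of $\tau\mapsto\sup_x w^+(\tau,x)$, and it only requires $\omega\in L^1$ rather than bounded. Your worry about criticality is beside the point, though. Taking $\lambda$ large already gives $\S_0\widehat{\K_*}(0)\int_0^{i_\dagger}\omega(i)e^{-(\alpha_*c_*+\lambda)i}\,\md i<1$ without the dispersion relation. What makes either argument work is that the renewal kernel is a density in $i$ with no mass at $i=0$, not its total mass.

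Incidentally, the purely spatial weight $e^{-\varepsilon x}$ you abandoned would also have worked. Since $\K$ is even, $\int\K(y)e^{(\alpha_*-\varepsilon)y}\,\md y<\int \K(y)e^{\alpha_* y}\,\md y$, and $e^{-\varepsilon x}$ is bounded on $x\ge X(t_*,0)-R$. Only the comoving weight $e^{-\beta(x-c_*t)}$ preserves criticality.

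Two pieces of bookkeeping need fixing.

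\textbf{The hypothesis on $X$ is misquoted.} The assumption is $X(t,0)\ge\max_{s\in[0,i]}X(t-i+s,s)$, with the $i=0$ boundary evaluated at the \emph{final} time $t$ of the characteristic, not $X(t-i,0)\ge\cdots$ as you state. In Step~1, admissibility of the characteristic ending at $(t,i,x-y)$ follows directly from $x-y\ge X(t,0)-R$. The weaker $x-y\ge X(t-i,0)-R$ only places $(t-i,x-y)$ in the set over which you take the supremum. As written, your justification invokes an inequality that is not assumed, though the step itself is true.

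\textbf{The lateral-boundary remark is false, and Step~3 needs the exit-point construction.} Your opening claim that monotonicity of $X(\cdot,i)$ stops backward characteristics from leaving through the lateral boundary does not hold in general. The failing case is exactly $X(t,i)\le x<\max_{s\in[0,i]}X(t-i+s,s)$, which the paper treats separately. In Step~3, choose the exit point as the paper does: take $\hat s\in[0,i]$ with $x=X(t-i+\hat s,\hat s)=\max_{s\in[\hat s,i]}X(t-i+s,s)$. Its existence follows from continuity and monotonicity of $s\mapsto\max_{s'\in[s,i]}X(t-i+s',s')$. This choice keeps the segment $s\in[\hat s,i]$ inside the domain. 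Finally, $t-i\ge i_\dagger+t_*$ makes the lateral hypothesis applicable at $(t-i+\hat s,\hat s)$.
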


\begin{prop}[Maximum principle with two boundaries for \eqref{moving frame-u}]
	\label{prop_cp_linear transport_2}  	For any fixed $t_*>0$ large and for any  continuous and nondecreasing  functions $X^\pm(s)$ defined for $s\ge t_*$ such that $X^-(s)<X^+(s-i_\dagger)$ for all $s\ge t_*$, let $u(t,i,x)$ be a  bounded and continuous  function defined for $t\ge i+t_*$, $i\in[0,i_\dagger)$ and  $X^-(t)-R\le x\le X^+(t)+R$,  such that
	\begin{equation}
		\label{mp-u2}
		\begin{aligned}
			\left\{\begin{split}
				&	\partial_t u(t,i,x)  + \partial_i u(t,i,x) \le 0,~~~~~~~~~~~~~~~~~~~~~t\ge i+t_*, ~i\in(0,i_\dagger), ~X^-(t)\le x\le X^+(t-i), \\
				&	u(t,0,x)\le \S_0 \int_0^{i_\dagger} \omega(i) e^{-\alpha_*c_* i} \K_* *u(t,i,x)\md i,  ~~~~~~~~~~~~~~~~t\ge i_\dagger+t_*,   ~X^-(t)\le x\le X^+(t).
			\end{split}\right.		
		\end{aligned}
	\end{equation}
	Assume further that $u(t,0,x)\le0$ for $t\in [ t_*,i_\dagger+t_*]$ and $x\in[X^-(t), X^+(t)]$ and that  $u(t,i,x)\le 0$ for $t\ge i+t_*$, $i\in[0,i_\dagger)$ and $x\in[X^-(t)-R,X^-(t)]\cup[X^+(t-i),X^+(t)+R]$,  then $u(t,i,x)\le 0$  for $t\ge i+i_\dagger+t_*$, $i\in[0,i_\dagger)$ and $x\in[X^-(t), X^+(t-i)]$. 
\end{prop}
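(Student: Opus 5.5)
We prove Proposition~\ref{prop_cp_linear transport_2} by contradiction, using the transport structure to reduce everything to the boundary trace $w(t,x):=u(t,0,x)$. Assume the conclusion fails. Then there is a point $(t_0,i_0,x_0)$ with $t_0\ge i_0+i_\dagger+t_*$ and $x_0\in[X^-(t_0),X^+(t_0-i_0)]$ where $u>0$.

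\textbf{Step 1: reduction to the trace.} Along the characteristic $s\mapsto(t_0-i_0+s,s,x_0)$, $s\in[0,i_0]$, the inequality $\partial_t u+\partial_i u\le 0$ gives $u(t_0,i_0,x_0)\le u(t_0-i_0,0,x_0)$. We check that the whole characteristic stays in the admissible region. Since $X^\pm$ is nondecreasing, for every $s\in[0,i_0]$ we have
\[
X^-(t_0-i_0+s)\le X^-(t_0)\le x_0\le X^+(t_0-i_0)\le X^+(t_0-i_0+s-s).
\]
The leftmost inequality needs care, since we only know $x_0\ge X^-(t_0)\ge X^-(t_0-i_0+s)$. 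On the other side, $x_0\le X^+(t_0-i_0)$ is exactly the constraint $x\le X^+(t-i)$ at $i=s$, $t=t_0-i_0+s$. If the characteristic leaves through the lateral boundary $x<X^-$, the hypothesis gives $u\le0$ there. So in all cases, positivity of $u$ somewhere forces the trace $w$ to be positive at some $(t_1,x_0)$ with $t_1\ge t_*+i_\dagger$ and $x_0\in[X^-(t_1),X^+(t_1)]$. Note that $t_1<t_*+i_\dagger$ is excluded by the initial assumption.

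\textbf{Step 2: a renewal inequality for $w$.} For $t\ge t_*+i_\dagger$ and $i\in[0,i_\dagger)$, the same characteristic argument gives $u(t,i,y)\le w(t-i,y)^+$. This holds when $y\in[X^-(t),X^+(t-i)]$, since $t-i\ge t_*$ and the characteristic starts on $i=0$ at a time where either the initial hypothesis or the definition of $w$ applies. When $y$ lies in the layers $[X^-(t)-R,X^-(t)]\cup[X^+(t-i),X^+(t)+R]$, the assumption gives $u\le0$ directly. Inserting this into the boundary inequality and using $\omega,\K_*\ge0$ yields
\[
w(t,x)\le \S_0\int_0^{i_\dagger}\omega(i)e^{-\alpha_*c_*i}\int_{\R}\K_*(x-y)\,w^+(t-i,y)\,\md y\,\md i .
\]
Here $w^+(s,\cdot)$ is extended by zero outside $[X^-(s),X^+(s)]$ and for $s<t_*+i_\dagger$.

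\textbf{Step 3: closing the argument (main obstacle).} The kernel has total mass $\S_0\int\omega e^{-\alpha_*c_*i}\md i\int\K_* = D_{c_*}(\alpha_*)+1=1$, so a plain sup bound only gives $\sup w^+\le\sup w^+$, which is not enough. I would first try a strict-mass argument from the bounded spatial window. Let $M(T)=\sup\{w^+(t,x):t\le T\}$ and take the first time at which a near-maximum is approached. The mass of $\K_*(x-\cdot)$ falling outside $[X^-(t-i),X^+(t-i)]$ is positive, because $\K_*$ has support $[-R,R]$ while the window is bounded. This gives $M\le\theta M$ with $\theta<1$, hence $M=0$.

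The difficulty is that the window may be long, so the lost mass is not uniformly positive. If the strict-mass argument fails, I would fall back on an exponential weight. Choose $\alpha'\ne\alpha_*$ close to $\alpha_*$ with $D_{c_*}$ slightly positive shifted in the right direction, or use a factor $e^{-\epsilon t}$, and apply the argument to $\tilde w=w\,e^{-\epsilon t}$. The time-delay factor $e^{-\epsilon i}$ then makes the total mass strictly smaller than $1$, and since $u$ is bounded, $\sup\tilde w^+$ is finite. The inequality $\sup\tilde w^+\le\theta\sup\tilde w^+$ with $\theta<1$ forces $w\le0$, which contradicts Step 1.
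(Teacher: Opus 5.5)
Your proof is correct, but it rests only on the fallback in Step~3, so you should discard the other two ideas there. The strict-mass argument fails because a (near-)maximiser of $w^+$ can sit at a point $x$ with $[x-R,x+R]$ inside the window, where $\K_*(x-\cdot)$ loses no mass. The $\alpha'$ variant, read as conjugating with $e^{-\alpha'(x-c_*(t-i))}$ instead of $e^{-\alpha_*(x-c_*(t-i))}$, yields a kernel of mass $1+D_{c_*}(\alpha')>1$ whichever side of $\alpha_*$ you take, since $D_{c_*}\ge0$ with its unique zero at $\alpha_*$. The weight $e^{-\epsilon t}$, on the other hand, does exactly what you claim. With $w^+$ extended by zero, $\tilde w=e^{-\epsilon t}w$ satisfies the same inequality with kernel mass $\theta=\S_0\widehat{\K_*}(0)\widetilde\omega(\alpha_*c_*+\epsilon)<1$, because $\omega\not\equiv0$ on $(0,i_\dagger)$. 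Then $N=\sup\tilde w^+<\infty$ forces $N\le\theta N$, i.e.\ $N=0$. (In Step~1, your own chain of inequalities shows the characteristic never leaves the region, so the remark about exiting through $x<X^-$ is superfluous.)

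Steps~1--2 coincide with the paper's claim \eqref{2.4-claim} and the first half of the proof of \eqref{2.4-claim2}; the difference is the closing step. The paper uses $\omega\le\tau_\infty$ to turn the delay integral into $\sup_x u^+(t,0,x)\le C\int_{t_*+i_\dagger}^t\sup_x u^+(\tau,0,x)\,\md\tau$ and concludes by Gronwall. That argument does not care about the total mass of the kernel, but it uses $\omega\in L^\infty$. Your weighted-supremum contraction is the Bielecki-norm form of the same idea. It avoids integrating a supremum in time and needs only $\omega\in L^1$ nontrivial. It uses the normalisation $\S_0\widehat{\K_*}(0)\widetilde\omega(\alpha_*c_*)=1$ to allow any small $\epsilon$, though a large $\epsilon$ would work without it, since $\widetilde\omega(\alpha_*c_*+\epsilon)\to0$. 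Both routes exploit the same structural fact: the boundary condition is a genuine delay, with no mass of the renewal kernel at $i=0$. Your zero extension of $w^+$ also handles signs more cleanly than the paper's chain, which drops the positive parts when it replaces $\S_0\omega(i)e^{-\alpha_*c_*i}$ by $C$.
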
 

\begin{proof}[Proof of Proposition \ref{prop_mp}]
	We first claim that 
	\begin{equation}
		\label{2.5-claim}
		u(t,i,x)\le u(t-i,0,x),~~~~~~t\ge i+t_*,~ i\in[0,i_\dagger),~ x\ge \max_{s\in[0,i]}X(t-i+s,s) -R.
	\end{equation}
	To do so, let us define for $t\ge i+t_*$ and $i\in(0,i_\dagger)$,
	\begin{equation}\label{2.5-v:eqn}
		v(s,x):=u(t-i+s,s,x),~~~~~~~~s\in[0,i],~x\ge X(t-i+s,s)-R.
	\end{equation}
	Then we derive from the transport inequality of \eqref{mp-u} that $\partial_s v(s,x)\le 0$ for $s\in(0,i]$, which, together with the continuity of $v$ in $s\in[0,i]$, implies that  for $t\ge i+t_*$ and $i\in(0,i_\dagger)$,
	\begin{equation*}
	~~~~~~	v(i,x)\le v(0,x),~~~~\text{for}~~x\ge \max_{s\in[0,i]} X(t-i+s,s){ -R},
	\end{equation*}
	which leads to our claim \eqref{2.5-claim}.
	
	Next, we shall show that
	\begin{equation}
		\label{2.5-claim2}
		u(t,0,x)\le 0,~~~~~~~~~~t\ge i_\dagger+t_*,~~x\ge X(t,0).
	\end{equation}
 Since $X(t,0)\ge\max_{s\in[0,i]}X(t-i+s,s)$ for $t\ge i+t_*$ and $i\in[0,i_\dagger)$, one then infers from \eqref{mp-u} and  \eqref{2.5-claim} that  for $t\ge i_\dagger+t_*$ and $x\ge X(t,0)$,
	\begin{align*}
		u(t,0,x)&\le \S_0 \int_0^{i_\dagger} \omega(i) e^{-\alpha_*c_* i} \K_* *u(t,i,x)\md i\\
		&=\S_0 \int_0^{i_\dagger} \omega(i) e^{-\alpha_*c_* i} \int_{X(t,0)-R}^{+\infty} \K_*(x-y) u(t,i,y)\md y\md i\le   C \int_0^{i_\dagger} \int_{X(t,0)-R}^{+\infty}  \K_*(x-y) u(t-i,0,y)\md y\md i.
	\end{align*}
By taking the positive part on both sides, together with the assumptions that   $X(t,0)$ is nondecreasing in $t\in(t_*,+\infty)$  and that  $u(t,0,x)\le 0$ for $t\ge t_*$ and $x\in[X(t,0)-R,X(t,0)]$, one has  
\begin{align*}
		u^+(t,0,x)&\le   C \int_0^{i_\dagger}  \bigg(\int_{X(t,0)-R}^{+\infty}  \K_*(x-y) u(t-i,0,y)\md y\bigg)^+\md i\\
		&\le  C \int_0^{i_\dagger}  \sup_{x\ge X(t,0)-R}  u^+(t-i,0,x)\md i\le C \int_0^{i_\dagger}  \sup_{x\ge X(t-i,0)-R}  u^+(t-i,0,x)\md i\\
		&=  C \int_0^{i_\dagger}  \sup_{x\ge X(t-i,0)} u^+(t-i,0,x)\md i	= C \int_{t-i_\dagger}^t \sup_{x\ge X(\tau,0)} u^+(\tau,0,x)\md \tau\\
	&\le C \int_{t_*}^{t} \sup_{x\ge X(\tau,0)} u^+(\tau,0,x) \md \tau, ~~~~~~~~~~~~~~~ t\ge i_\dagger+t_*,~~x\ge X(t,0).
\end{align*}
It further gives that 
	\begin{equation*}
		\sup_{x\ge X(t,0)} u^+(t,0,x)\le C \int_{t_*}^{t} \sup_{x\ge X(\tau,0)} u^+(\tau,0,x) \md \tau= C \int_{i_\dagger+t_*}^t \sup_{x\ge X(\tau,0)} u^+(\tau,0,x)\md \tau,~~~~~~~t\ge i_\dagger+t_*,
	\end{equation*}
where the last equality is due to the assumption that   $u(t,0,x)\le0$ for $t\in [ t_*,i_\dagger+t_*]$ and $x\ge X(t,0)$.
	The Gronwall inequality then implies that $\sup_{x\ge X(t,0)}u^+(t,0,x)= 0$ for $t\ge i_\dagger+t_*$, which proves \eqref{2.5-claim2}.
	
	Finally, let us consider $u(t,i,x)$ in the region $t\ge i+i_\dagger+t_*$, $i\in[0,i_\dagger)$ and $x\ge X(t,i)$. It immediately  follows from \eqref{2.5-claim} and \eqref{2.5-claim2} that
	\begin{equation*}
		~~~~~~~~~~~~~	u(t,i,x)\le u(t-i,0,x)\le 0,~~~~~~~t\ge i+i_\dagger+t_*,~i\in [0,i_\dagger),~x\ge \max_{s\in[0,i]}X(t-i+s,s).
	\end{equation*}
	We divide into two situations: either $X(t,i)= \max_{s\in[0,i]} X(t-i+s,s)$ or $X(t,i)< \max_{s\in[0,i]} X(t-i+s,s)$. For the former case, it is trivial. For the latter,  let us take any point $\hat x\in[X(t,i), \max_{s\in[0,i]} X(t-i+s,s)]$. By the continuity (in $t$ and in $i$) of $X(t,i)$,  there necessarily exists $\hat s\in[0,i]$ such that  $\hat x=\max_{s\in[\hat s,i]}X(t-i+s,s)=X(t-i+\hat s,\hat s)$. The decreasing property  of the function $v$ in \eqref{2.5-v:eqn} implies in particular that
	\begin{equation*}
		~~~	u(t,i,\hat x)=v(i,\hat x)\le v(\hat s,\hat x)= u(t-i+\hat s,\hat s, \hat x)\le 0,~~~~~~t\ge i+i_\dagger+t_*,~i\in(0,i_\dagger),
	\end{equation*}
	in which the last inequality is due to the assumption that $u(t,i,X(t,i))\le 0$ for $t\ge i+i_\dagger+t_*$ and $i\in[0,i_\dagger)$.
	Consequently, together with \eqref{2.5-claim2}, we conclude that $u(t,i,x)\le 0$ for $t\ge i+i_\dagger+t_*$, $i\in[0,i_\dagger)$ and $x\ge X(t,i)$.
\end{proof}

\begin{proof}[Proof of Proposition \ref{prop_cp_linear transport_2}]
	Analogous to the proof of Proposition \ref{prop_mp}, we first claim that 
	\begin{equation}\label{2.4-claim}
		u(t,i,x)\le u(t-i,0,x),~~~~~~~t\ge i+t_*,~i\in[0,i_\dagger),~X^-(t)\le x\le X^+(t-i).
	\end{equation}
	To do so, let us define for $t\ge i+t_*$ and $i\in(0,i_\dagger)$,
\begin{equation}\label{2.4-v:eqn}
	v(s,x):=u(t-i+s,s,x),~~~~~~~~s\in[0,i],~X^-(t-i+s)\le x\le X^+(t-i).
\end{equation}
Then  the transport inequality of \eqref{mp-u2} implies that $\partial_s v(s,x)\le 0$ for $s\in(0,i]$, together with the continuity of $v$ in $s\in[0,i]$, implies that for $t\ge i+t_*$ and $i\in(0,i_\dagger)$,
\begin{equation*}
~~~~~~	v(i,x)\le v(0,x),~~~~\text{for}~~ \max_{s\in[0,i]} X^-(t-i+s)\le x\le X^+(t-i).
\end{equation*}
The nondecreasing property of $X^-$ indicates that $\max_{s\in[0,i]} X^-(t-i+s)=X^-(t)$. Our claim \eqref{2.4-claim} is therefore achieved. 

	In what follows, we claim that 
	\begin{equation}
		\label{2.4-claim2}
		u(t,0,x)\le 0,~~~~~~~~~~t\ge i_\dagger+t_*,~X^-(t)\le x\le X^+(t).
	\end{equation}
Indeed, we have for $t\ge i_\dagger+t_*$ and $x\in[X^-(t), X^+(t)]$,
\begin{equation*}
	\begin{aligned}
		u(t,0,x)&\le \S_0 \int_0^{i_\dagger} \omega(i) e^{-\alpha_*c_* i} \K_* *u(t,i,x)\md i=\S_0 \int_0^{i_\dagger} \omega(i) e^{-\alpha_*c_* i} \int_{X^-(t)-R}^{X^+(t)+R} \K_*(x-y) u(t,i,y)\md y\md i\\
		&\le \S_0 \int_0^{i_\dagger} \omega(i) e^{-\alpha_*c_* i} \int_{X^-(t)}^{X^+(t-i)} \K_*(x-y) u(t,i,y)\md y\md i\le C \int_0^{i_\dagger}  \int_{X^-(t)}^{X^+(t-i)} \K_*(x-y) u(t,i,y)\md y\md i,
	\end{aligned}
\end{equation*}
where the second inequality is due to the assumption that $u(t,i,x)\le 0$ for $t\ge i+t_*$, $i\in[0,i_\dagger)$ and $x\in[X^-(t)-R,X^-(t)]\cup[X^+(t-i),X^+(t)+R]$. 
Applying \eqref{2.4-claim} to the right-hand side of the above formula, it follows that  for $t\ge i_\dagger+t_*$ and $x\in[X^-(t), X^+(t)]$,
\begin{align*}
	u(t,0,x)& \le C \int_0^{i_\dagger}  \int_{X^-(t)}^{X^+(t-i)} \K_*(x-y) u(t-i,0,y)\md y\md i\\
	& \le C \int_0^{i_\dagger}  \sup_{X^-(t)\le x\le X^+(t-i)}  u(t-i,0,x)\md i=C \int_{t-i_\dagger}^{t}  \sup_{X^-(t)\le x\le X^+(\tau)}  u(\tau,0,x)\md \tau\\
	&\le C \int_{t_*}^{t}  \sup_{X^-(\tau)\le x\le X^+(\tau)}  u(\tau,0,x)\md \tau\le C \int_{i_\dagger+t_*}^{t}  \sup_{X^-(\tau)\le x\le X^+(\tau)}  u(\tau,0,x)\md \tau,
\end{align*}
by noticing that $X^-(s)$ is nondecreasing in $s\ge t_*$ and that  $u(t,0,x)\le0$ for $t\in [ t_*,i_\dagger+t_*]$ and $x\in[X^-(t), X^+(t)]$. Thus,
\begin{equation*}
\sup_{X^-(t)\le x\le X^+(t)} u(t,0,x)	\le C \int_{i_\dagger+t_*}^{t}  \sup_{X^-(\tau)\le x\le X^+(\tau)}  u(\tau,0,x)\md \tau,~~~~~~t\ge i_\dagger+t_*.
\end{equation*}
Our claim \eqref{2.4-claim2} is an immediate consequence of the Gronwall inequality.

Finally, it follows from \eqref{2.4-claim} and \eqref{2.4-claim2} that
\begin{equation*}
	u(t,i,x)\le u(t-i,0,x)\le 0,~~~~~t\ge i+i_\dagger+t_*,~i\in[0,i_\dagger),~x\in[X^-(t), X^+(t-i)].
\end{equation*}
This completes the proof.
\end{proof}

\section{Heat kernel aysmptotics for the linearized transport equation}
\label{sec: heat kernel estimate}
The goal of this section is to prove that the linear transport problem, in the reference frame moving with the critical velocity,  shares a common  asymptotic behavior  with the heat equation within the diffusive regime. While heat kernel estimates for integral equations with symmetric kernels are not entirely new - see for instance \cite{BC2002}, heat kernel estimates for linear nonlocal equations with the critical velocity  only date back to \cite{BFRZ2023} for discrete equations, and \cite{Roquejoffre2022} for integral equations.
The strategy is to look at the equivalent form of \eqref{moving frame-u}:
\begin{equation}
	\label{sol_u}
	\begin{aligned}
		u(t,i,x)=
		\begin{cases}
			u_0(i-t,x),  &t\le i,~x\in\R,\\
			\Phi(t-i,x), &t>i,~ x\in\R.
		\end{cases}
	\end{aligned}
\end{equation}
The function $\Phi$  satisfies the following linear equation, that we will sometimes call by its standard name ``renewal equation'', for $t>0$ and $x\in\R$:
\begin{equation}\label{renewal eqn=3}
	\begin{aligned}
		\Phi(t,x)&=\S_0  \int_0^{i_\dagger} \omega(i) e^{-\alpha_*c_* i}\K_* * u(t,i,x)\md i\\
		&=\S_0 \int_0^t \omega(i) e^{-\alpha_*c_* i}\K_* * \Phi(t-i,x)\md i+\underbrace{\S_0\int_0^{\infty} \omega(i+t) e^{-\alpha_*c_* (i+t)}\K_* *  u_0(i,x)\md i}_{=:\mathcal{F}(t,x)}
	\end{aligned}
\end{equation}
where we have denoted the last term in the right-hand side by  $\mathcal{F}(t,x)=\mathcal{F}(t,x; u_0)$. It is then clear that 
\begin{equation*}
	\begin{aligned}
		\mathcal{F}(t,x)=\begin{cases}
			\S_0\int_0^{i_\dagger-t} \omega(i+t) e^{-\alpha_*c_* (i+t)}\K_* *  u_0(i,x)\md i,~~~~&\text{if}~~~t<i_\dagger,\\
				0, &\text{if}~~~t\ge i_\dagger.
			\end{cases}
			\end{aligned}
	\end{equation*}

A significant body of this section will contribute to the precise information of the nontrivial part of the function $u$ within the diffusive regime, for which we analyze the equivalent linear renewal equation \eqref{renewal eqn=3} in this section, showing that the solution $\Phi$ to \eqref{renewal eqn=3} performs  a universal Gaussian asymptotic profile as that of the Laplacian operator for which the low frequencies in the Fourier variable play the dominant role.

 Before stating our main result, let us introduce the normalized Gaussian profile:
\begin{equation*}
	\mathscr{G}(x):=\frac{1}{\sqrt{2\pi}}e^{-\frac{x^2}{2}},~~~~~~x\in\R.
\end{equation*}
For  notational simplicity and consistency with previous section, we set
\begin{equation}
	\label{notation}
	a_*:=\int_\R  x\K_*(x)\md x>0,~~~2d_*:=\int_\R  x^2\K_*(x)\md x>0,~~~\upsilon_0:=\frac{c_*\widehat{\K_*}(0)}{a_*},~~~b:=\frac{1}{2}\upsilon_0\partial^2_{\alpha}D_{c_*}(\alpha_*)>0.
\end{equation}

\begin{thm}\label{thm_3.1} 
	Let  $\Phi(t,x)$ be the solution to the renewal equation \eqref{renewal eqn=3} for $t>0$ and $x\in\R$ with an absolutely continuous and compactly supported initial condition  $u_0$ in $[0,i_\dagger)\times\R$. Then for $T>i_\dagger$ sufficiently large, there is a nonnegative kernel $\mathcal{G}(t,x)$ defined  for $t\ge T$ and $x\in\R$ such that for any $\gamma_*\in(3/8,1/2)$, the following estimate holds true:
	\begin{equation}\label{4.2-1}
		\Vert \Phi(t,x)-\mathcal{G}**\mathcal{F}(t,x)\Vert_{L^{\infty}_x(\R)}\le C \big(\Vert u_0\Vert_{L^\infty([0,i_\dagger)\times\R)}+\max_{[0,i_\dagger)}\big\Vert  u_0(i,x) \big\Vert_{H^1_x(\R)} \big)e^{-\frac{b}{2}t^{1-2\gamma_*}},~~~~~t\ge T.
	\end{equation}
	Here, $\mathcal{G}** \mathcal{F}(t,x):=\int_\R\int_0^t \mathcal{G}(t-s,x-y)\mathcal{F}(s,y)\md s\md y$, and $b>0$ is given in \eqref{notation}. Moreover, by  choosing $\varsigma\in(0,1/2-\gamma_*)$,  we have for  $t\ge T$, up to increasing $T$ if needed, 
	\begin{itemize}
		\item[(i)] if $|x-c_*t|\le t^{\frac{1}{2}+\varsigma}$,
		\begin{equation}
			\label{estimate_1_Gaussian}
			\mathcal{G}(t,x)=\bigg[\frac{\upsilon_0}{\sqrt{2bt}}+\frac{k\upsilon_0}{4b^2t}\mathcal{P}\left(\frac{x-c_*t}{\sqrt{2bt}}\right)	\bigg]\mathscr{G}\left(\frac{x-c_*t}{\sqrt{2bt}}\right)	+\mathcal{R}(t,x),
		\end{equation} 
	 with
		$\mathcal{P}(x):=-3x	+x^3,\ x\in\R,$
 and
		\begin{equation}
			\label{remaining term R}
			|\mathcal{R}(t,x)|\lesssim t^{\varsigma-1} e^{-\frac{(x-c_*t)^2}{4bt}	}.
		\end{equation}
	Therefore, $\mathcal{G}(t,x)=\frac{\upsilon_0}{\sqrt{2bt}}\mathscr{G}\left(\frac{x-c_*t}{\sqrt{2bt}}\right)\Big(1+o_{t\to+\infty}(1)\Big)$ for  $|x-c_*t|\le t^{\frac{1}{2}+\varsigma}$;
		\item[(ii)] if $|x-c_*t|\ge  t^{\frac{1}{2}+\varsigma}$,
		\begin{equation}
			\label{estimate_2}
			\mathcal{G}(t,x)\lesssim t^{-\gamma_*}e^{-t^\varsigma}.
		\end{equation}
	\end{itemize}
\end{thm}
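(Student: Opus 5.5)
Since \eqref{renewal eqn=3} is a convolution equation in both $t$ and $x$, the plan is to solve it by a Fourier transform in $x$ combined with a Laplace transform in $t$. Write $\widehat{\K_*}(\xi)=\int_\R\K_*(x)e^{-ix\xi}\md x$ and set
\[
H(\lambda,\xi):=\S_0\widehat{\K_*}(\xi)\int_0^{i_\dagger}\omega(i)e^{-\alpha_*c_*i}e^{-\lambda i}\md i .
\]
Formally, $\widehat{\Phi}=(1-H)^{-1}\widehat{\mathcal F}$. The kernel is defined by $\mathcal G=\sum_{n\ge1}(\S_0\omega e^{-\alpha_*c_*\cdot}\K_*)^{*n}$, the resolvent of the renewal equation, which is nonnegative. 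Then $\Phi=\mathcal F+\mathcal G**\mathcal F$ exactly.

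\textbf{Step 1: reduction to \eqref{4.2-1}.} Because $\mathcal F(t,\cdot)=0$ for $t\ge i_\dagger$, the difference $\Phi-\mathcal G**\mathcal F=\mathcal F$ vanishes for $t\ge T>i_\dagger$. So \eqref{4.2-1} is essentially automatic if $\mathcal G$ is this resolvent. Alternatively, if one prefers to define $\mathcal G$ only through its low-frequency part $\mathcal G_{\rm low}$, with Fourier cutoff $|\xi|\le t^{-\gamma_*}$, then the high-frequency remainder must be shown to be $O(e^{-\frac b2 t^{1-2\gamma_*}})$. In that version the $H^1_x$ norm of $u_0$ enters, to control $\int|\widehat{\mathcal F}|\md\xi$ via Cauchy--Schwarz with weight $(1+\xi^2)$.

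\textbf{Step 2: the characteristic root $\lambda(\xi)$.} Fix $\xi$ and solve $H(\lambda,\xi)=1$ for $\lambda=\lambda(\xi)$. At $\xi=0$, the identities $D_{c_*}(\alpha_*)=0$ and $\partial_\alpha D_{c_*}(\alpha_*)=0$ give $H(0,0)=1$ and the root $\lambda(0)=0$. The implicit function theorem applies since $\partial_\lambda H(0,0)\neq0$; this derivative relates to $a_*/(c_*\widehat{\K_*}(0))$ and produces the factor $\upsilon_0$. Expanding,
\[
\lambda(\xi)=-ic_*\xi-b\xi^2+ik\xi^3+O(\xi^4),
\]
where $b$ comes from $\partial_\alpha^2D_{c_*}(\alpha_*)$ and $k$ is a real cubic coefficient. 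One also needs a spectral gap: for $|\xi|\ge\delta$, or for $\xi$ small and $\lambda$ away from $\lambda(\xi)$, the zeros of $1-H$ satisfy $\Re\lambda\le-\eta<0$. This should follow from the Riemann--Lebesgue lemma in $\xi$ (using $\K\in W^{1,1}$) and in $\lambda$ (using that $\omega$ is absolutely continuous), together with strict positivity of $\omega$ on $[0,\tau_0]$, which rules out purely oscillatory resonances.

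\textbf{Step 3: residue computation and Gaussian asymptotics.} Inverting the Laplace transform by shifting the contour to $\Re\lambda=-\eta$ picks up the residue at $\lambda(\xi)$:
\[
\widehat{\mathcal G}(t,\xi)=\frac{e^{\lambda(\xi)t}}{-\partial_\lambda H(\lambda(\xi),\xi)}+O(e^{-\eta t}).
\]
For $|\xi|\le t^{-\gamma_*}$, expand $e^{\lambda(\xi)t}=e^{-ic_*\xi t-b\xi^2t}\big(1+ik\xi^3t+O(\xi^4t+\xi^6t^2)\big)$ and the prefactor as $\upsilon_0(1+O(\xi))$. Inverse Fourier transforming these terms gives the Gaussian $\frac{\upsilon_0}{\sqrt{2bt}}\mathscr G$. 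The $\xi^3t$ term gives the Hermite correction $\frac{k\upsilon_0}{4b^2t}\mathcal P\,\mathscr G$, since third derivatives of Gaussians produce $x^3-3x$. The condition $\gamma_*>3/8$ makes the errors $\xi^4t$ and $\xi^6t^2$ integrable to order $t^{\varsigma-1}$ after completing the square with $e^{-b\xi^2t/2}$, which yields the weaker decay $e^{-(x-c_*t)^2/(4bt)}$ in \eqref{remaining term R}. The region $t^{-\gamma_*}\le|\xi|\le\delta$ contributes at most $e^{-bt^{1-2\gamma_*}/2}$. For (ii), deform $\xi\mapsto\xi+i\theta$ with $\theta\sim t^{-1/2+\varsigma}$ of appropriate sign (Paley--Wiener, using compact support of $\K$). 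This gives a factor $e^{-\theta|x-c_*t|+b\theta^2t}\lesssim e^{-t^{\varsigma}}$, while the $\xi$-integral is bounded by $t^{-\gamma_*}$.

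\textbf{Main obstacle.} The hardest step is the uniform spectral gap in Step 2: showing that $1-H(\lambda,\xi)$ has no zeros with $\Re\lambda>-\eta$ other than $\lambda(\xi)$ for small $\xi$. Time and space are coupled through $e^{-\lambda i}$ and $\widehat{\K_*}(\xi)$, so the argument must handle both simultaneously. I would first use $|H(\lambda,\xi)|\le H(\Re\lambda,0)$, with strict inequality unless $\xi=0$ and $\Im\lambda=0$. This strictness comes from the positivity of $\K_*$ and of $\omega$ on an interval. Combined with compactness on bounded sets and Riemann--Lebesgue at infinity, it should yield the gap.
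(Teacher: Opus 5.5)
Your route differs from the paper's in the choice of $\mathcal{G}$. The paper sets $\mathcal{G}(t,x)=\frac{1}{2\pi}\int_{|\xi|\le t^{-\gamma_*}}r(\xi)e^{\mathbf{i}x\xi}e^{\lambda(\xi)t}\md\xi$. For this truncated integral, (i)--(ii) are direct contour computations. By contrast, \eqref{4.2-1} is a genuine estimate in the paper. Everything discarded (the band $t^{-\gamma_*}\le|\xi|\le\varepsilon$, the vertical-line remainder $\Lambda$, and the frequencies $|\xi|>\varepsilon$) appears multiplied by $\widetilde{\mathcal F}$. That factor supplies the missing decay in $\Im\lambda$, through the $L^2\times L^2$ pairing of $\widehat f_\eta$ and $\widehat g_\eta$, and in $\xi$, through the $H^1_x$ norm of $u_0$.

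Your choice of the full resolvent $\mathcal{G}=\sum_{n\ge1}k^{**n}$, with $k(t,x)=\S_0\omega(t)e^{-\alpha_*c_*t}\K_*(x)$, makes \eqref{4.2-1} an identity for $t>i_\dagger$ and gives $\mathcal{G}\ge0$ for free. This is a real gain. The paper never shows its truncated kernel is nonnegative, and in general it is not: the sharp cutoff leaves an oscillating tail of size roughly $e^{-bt^{1-2\gamma_*}}/|x|$ as $|x|\to\infty$. Your ``option B'' is the paper's argument and inherits this defect.

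The price is that (i)--(ii) become statements about every frequency of the kernel itself, with no $u_0$ to borrow regularity from. Your Step~3 only handles small $|\xi|$. This, rather than the spectral gap, is the step that does not go through as written. Shifting to a vertical line $\Re\lambda=\eta<0$ does not produce an $O(e^{\eta t})$ remainder uniformly in $x$, for two reasons:
\begin{itemize}
\item $\widetilde\omega(\lambda+\alpha_*c_*)$ decays only like $\omega(0)/\lambda$. So $H/(1-H)$ is in general not integrable in $\Im\lambda$, and Lemma~\ref{lem_residue} cannot be applied to it.
\item $\K\in W^{1,1}$ only gives $|\widehat{\K_*}(\xi)|\lesssim(1+|\xi|)^{-1}$. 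A pointwise-in-$\xi$ spectral gap therefore does not control $\int_{|\xi|>\varepsilon}|\widehat{\mathcal G}(t,\xi)|\md\xi$.
\end{itemize}

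The fix is to peel off the first Neumann term. Since $k(t,\cdot)=0$ for $t>i_\dagger$, for such $t$ you may replace $H/(1-H)$ by $H^2/(1-H)$. Its residue at $\lambda(\xi)$ is still $r(\xi)$, because $H=1$ there. Plancherel then gives
\[
\int_{|\xi|>\varepsilon}\int_{\R}\Big|\frac{H^2}{1-H}(\eta+\mathbf{i}y,\xi)\Big|\md y\,\md\xi\le\frac{\S_0^2}{m_\eta}\,\big\|\widehat{\K_*}\big\|_{L^2}^2\,\big\|\widehat g_\eta\big\|_{L^2}^2<\infty,
\]
with the analogous bound on $\Re\lambda=\boldsymbol{\delta}$ for $|\xi|\le\varepsilon$. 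For $|\xi|>\varepsilon$ you can alternatively use the product structure $k^{**n}(t,x)=\varphi^{*n}(t)\K_*^{*n}(x)$ with $\varphi:=\S_0\omega e^{-\alpha_*c_*\cdot}$. Only $n\ge2$ contributes when $t>i_\dagger$, and together with \eqref{A5} this reduces the bound to a subcritical scalar renewal resolvent, which decays exponentially.

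With either device, your full kernel differs from the paper's truncated one by $O(e^{-\frac{b}{2}t^{1-2\gamma_*}})$ in $L^\infty_x$. Since $1-2\gamma_*>2\varsigma$, this difference is absorbed into $\mathcal R$ and into (ii), and your route then proves the statement including nonnegativity.
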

\begin{rmk} 
\textnormal{ We point out that it is sufficient for the heat kernel $\mathcal{G}(t,\cdot)$  to be defined for $t>i_\dagger$ sufficiently large. As such,  the convolution of  $\mathcal{G}(t,\cdot)$  and $\mathcal{F}(t,\cdot)$ with respect to time is always well-defined for all $t$ sufficiently large, by noticing from \eqref{renewal eqn=3} that $\mathcal{F}(t,x)\equiv0$ for $t>i_\dagger$,  uniformly in $x\in\R$. 
} 

\end{rmk}

Our analysis carried out in  this section will take advantage of both Fourier and Laplace transforms\footnote{We denote by $\widehat{f}(\xi)=\int_\R e^{-\mathbf{i}x\xi}f(x)\md x$ for $\xi\in\R$ the Fourier transform of function $f(x)$ for $x\in\R$, by $\widetilde{g}(\lambda)=\int_0^\infty e^{-\lambda t}g(t)\md t$ for $\lambda\in\mathbb{C}$ the Laplace transform of function $g(t)$ for $t\in\R_+$, and by $\widetilde h(\lambda,\xi):=\widetilde{\widehat{h}}(\lambda,\xi)=\int_0^\infty e^{-\lambda t}\int_\R e^{-\mathbf{i}x\xi}h(t,x)\md x\md t$ for $\lambda\in\mathbb{C}$ and $\xi\in\R$ the Fourier and Laplace transforms of function $h(t,x)$ for $t\in\R_+$ and $x\in\R$.}. In what follows, we shall prove that  the long time behavior of the renewal function $\Phi(t,x)$, solution to \eqref{renewal eqn=3}, is dominated by  the low frenquency region of $\widehat\Phi(t,\xi)$ - Fourier transform of $\Phi(t,x)$, while the long time behavior of $\widehat\Phi(t,\xi)$ is determined by   
   the singularity of its Laplace transform $\widetilde{\Phi}(\lambda,\xi)$.
   
      First of all, applying  in \eqref{sol_u} the Fourier transform on $u$ with respect to  $x\in\R$,
it then gives
\begin{equation*}
	\begin{aligned}
			\hat u(t,i,\xi)=\begin{cases}
			\hat u(0,i-t,\xi),~~ &t\le i, ~\xi\in\R,\\
		\widehat\Phi(t-i,\xi), &t>i,~ \xi\in\R,
		\end{cases}
	\end{aligned}
\end{equation*}
where
\begin{equation}\label{renewal eqn-Fourier}
\begin{aligned}
	\widehat\Phi(t,\xi)= \S_0\widehat{\K_*}(\xi)\int_0^t \omega(i) e^{-\alpha_*c_* i} \widehat\Phi(t-i,\xi)\md i+\widehat{ \mathcal{F}}(t,\xi),~~~~~~~~~~~~ t>0, ~\xi\in\R,
\end{aligned}
\end{equation}
with 
\begin{equation}\label{F_fourier}
	\widehat{ \mathcal{F}}(t,\xi)=\S_0\widehat{\K_*}(\xi)\int_0^{\infty}\omega(i+t) e^{-\alpha_* c_*(i+t)} \widehat{u}_0(i,\xi)\md i,~~~~~~~~~~~~ t>0, ~\xi\in\R.
\end{equation}

By noticing the convolution in time variable in \eqref{renewal eqn-Fourier},  we continue  applying the Laplace transform in \eqref{renewal eqn-Fourier}  with respect to time variable $t>0$, then  we arrive at
\begin{equation}\label{A1}
	\widetilde{\Phi}(\lambda,\xi)=\S_0\widehat{\K_*}(\xi)
	\widetilde{\omega}(\lambda+\alpha_*c_*)\widetilde{\Phi}(\lambda,\xi)+\widetilde{\mathcal{F}}(\lambda,\xi),~~~~~\lambda\in\mathbb{C},~\xi\in\R,
\end{equation}
with 
\begin{equation*}\label{F_fourier+laplace}
	\widetilde{\mathcal{F}}(\lambda,\xi)=\S_0\widehat{\K_*}(\xi)\int_0^\infty e^{-\lambda t}\int_0^\infty \omega(i+t)e^{-\alpha_* c_*(i+t)}\widehat{u}_0(i,\xi)\md i\md t,~~~~~\lambda\in\mathbb{C},~\xi\in\R.
\end{equation*}
This further implies that
$
	\displaystyle\widetilde{\Phi}(\lambda,\xi)=\frac{\widetilde{\mathcal{F}}(\lambda,\xi)
		}{1-\S_0\widehat{\K_*}(\xi)
			\widetilde{\omega}(\lambda+\alpha_*c_*)},
$
and thus substituting it back into \eqref{A1} yields
\begin{equation}\label{renewal eqn-Laplace}
	\widetilde{\Phi}(\lambda,\xi)=\underbrace{\frac{\S_0\widehat{\K_*}(\xi)
		\widetilde{\omega}(\lambda+\alpha_*c_*)}{1-\S_0\widehat{\K_*}(\xi)
		\widetilde{\omega}(\lambda+\alpha_*c_*)}}_{\widetilde{\mathcal{H}}(\lambda,\xi)}\widetilde{\mathcal{F}}(\lambda,\xi)+\widetilde{\mathcal{F}}(\lambda,\xi), ~~~~~\lambda\in\mathbb{C},~\xi\in\R.
\end{equation}
\subsubsection*{The characteristic equation}
The function $\widetilde{\Phi}$ is meromorphic and its poles are among the roots of the \textit{Lotka's characteristic equation}
\begin{equation}
	\label{A4}
	\S_0\widehat{\K_*}(\xi)
	\widetilde{\omega}(\lambda+\alpha_*c_*)=1, ~~~~~\lambda\in\mathbb{C},~\xi\in\R.
\end{equation}
At this point, it is useful to realize that the dispersion relation \eqref{dr} can be written as 
$\S_0\widehat{\K_*}(0)
\widetilde{\omega}(\alpha_*c_*)=1$.

The  lemma below is devoted to the analysis of the solution $\lambda\in\mathbb{C}$ of  the Lotka's characteristic equation  \eqref{A4} with respect to different ranges of $\xi\in\R$. 

\begin{lem}\label{lem_analysis of lambda}
	The solutions $\lambda(\xi)\in\mathbb{C}$ of \eqref{A4} with respect to $\xi\in\R$ satisfy the following properties:
	\begin{itemize}
		\item[(i)] $\Re\lambda(\xi)< 0$ when $\xi\in\R\backslash\{0\}$, and $\lambda(0)=0$;
		
		\item[(ii)] there exists $\varep\!>\!0$ such that  $\lambda(\xi)$ is uniquely defined for $\xi\in[-\varep,\varep]$, and it has the  expansion:
		\begin{equation}\label{precise lambda}
			\lambda(\xi)=-\mathbf{i}c_*\xi- b\xi^2+\mathbf{i}k\xi^3 +O(\xi^4),~~~~~\text{as}~\xi\to 0,
		\end{equation}
		with $
			b>0$ given in \eqref{notation}, and $k\in\R$.
	Therefore, up to decreasing $\varep>0$,  
	\begin{equation}
		\label{rough lambda}
			\big|\lambda(\xi)-(-\mathbf{i}c_*\xi- b\xi^2)\big|\le \frac{b}{2}\xi^2~~~~~~\text{for}~\xi\in[-\varep,  \varep].
	\end{equation}
		
		\item[(iii)] for $\varep>0$ chosen in (ii), there exists  some $\bar\lambda<0$ such that $\lambda(\xi)$ for each $\xi\in\R\backslash[-\varep,\varep]$ satisfies $\Re\lambda(\xi)\le \bar \lambda<0$.
	\end{itemize}
\end{lem}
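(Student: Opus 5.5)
Write $\Theta(\lambda):=\S_0\widetilde\omega(\lambda+\alpha_*c_*)=\S_0\int_0^{i_\dagger}\omega(i)e^{-\alpha_*c_*i}e^{-\lambda i}\md i$, which is entire in $\lambda$ because $i_\dagger<\infty$. Equation \eqref{A4} then reads $\widehat{\K_*}(\xi)\,\Theta(\lambda)=1$. The plan rests on two positivity facts. First, $\omega e^{-\alpha_*c_*i}\ge 0$ is nontrivial and $\K_*\ge0$. Second, the dispersion relation gives $\widehat{\K_*}(0)\Theta(0)=1$. Moreover, $c_*,\alpha_*$ being the minimizing pair means $\partial_\alpha D_{c_*}(\alpha_*)=0$ and $\partial_c D_{c_*}(\alpha_*)\neq0$ (indeed $<0$).

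For (i), take $\Re\lambda\ge0$. Then $|\Theta(\lambda)|\le\Theta(\Re\lambda)\le\Theta(0)$. Also $|\widehat{\K_*}(\xi)|\le\widehat{\K_*}(0)$, and equality fails for $\xi\ne0$: $\K_*$ is a nonnegative $L^1$ function with continuous support, so $e^{-\mathbf{i}x\xi}$ cannot have constant phase on $[-R,R]$. Hence $|\widehat{\K_*}(\xi)\Theta(\lambda)|<1$ for $\xi\neq0$, so every root has $\Re\lambda<0$. For $\xi=0$, equality in $|\Theta(\lambda)|\le\Theta(0)$ forces $\Re\lambda=0$ together with constant phase of $e^{-\mathbf{i}\Im\lambda\, i}$ on the support of $\omega$, which contains $[0,\tau_0]$ by (vii). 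Thus $\Im\lambda=0$, and $\lambda(0)=0$ is the only root in the closed right half plane. For (iii), I will argue by contradiction. Suppose there are roots $(\lambda_n,\xi_n)$ with $|\xi_n|\ge\varep$ and $\Re\lambda_n\to0^-$. Since $|\widehat{\K_*}(\xi)|\to0$ as $|\xi|\to\infty$ (Riemann–Lebesgue), and $|\Theta|$ is bounded on $\{\Re\lambda\ge-1\}$, the $\xi_n$ stay bounded. By Riemann–Lebesgue for $\Theta$ as $|\Im\lambda|\to\infty$ (using $\omega\in L^1$), $\Im\lambda_n$ also stays bounded. Extracting a limit gives a root with $\Re\lambda=0$ and $|\xi|\ge\varep$, which contradicts (i).

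For (ii), I apply the holomorphic implicit function theorem to $F(\lambda,\xi)=\widehat{\K_*}(\xi)\Theta(\lambda)-1$ near $(0,0)$. $F$ is analytic in $\xi$ because $\K_*$ is compactly supported. We have $\partial_\lambda F(0,0)=-\widehat{\K_*}(0)\S_0\int i\,\omega e^{-\alpha_*c_*i}\md i\neq0$, so a unique analytic branch $\lambda(\xi)$ exists near $0$. Uniqueness of roots in a small disc, combined with the compactness argument of (iii), shows there is no other root with $\Re\lambda>-\delta$ for $|\xi|\le\varep$. The Taylor coefficients follow by successive differentiation of $F(\lambda(\xi),\xi)=0$. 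The key identification uses $\widehat{\K_*}(\xi)=\int\K(x)e^{(\alpha_*-\mathbf{i}\xi)x}\md x$, so that $F(\lambda,\xi)=D$-type expression at $\alpha=\alpha_*-\mathbf{i}\xi$. Precisely, $F(-\mathbf{i}c\xi\ldots)$ relates to $D_c(\alpha_*-\mathbf{i}\xi)$ with $\lambda=c_*\cdot(-\mathbf{i}\xi)$ appearing through $e^{-\alpha c i}$. Setting $\lambda=-\mathbf{i}c_*\xi+\mu$, the first-order relation gives $\lambda'(0)=-\mathbf{i}c_*$; here the minimality condition $\partial_\alpha D_{c_*}(\alpha_*)=0$ is exactly what cancels the drift correction. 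The second order then yields $\lambda''(0)/2=-\tfrac12\partial_\alpha^2D_{c_*}(\alpha_*)/(\text{the }\lambda\text{-derivative})$, which after normalization is $-b$ with $b$ as in \eqref{notation}. Positivity of $b$ follows from strict convexity of $\alpha\mapsto D_{c_*}(\alpha)$, since it is a product of log-convex positive functions and is nondegenerate. Reality of $k$ and of $b$ follows from the symmetry $\lambda(-\xi)=\overline{\lambda(\xi)}$ (real coefficients), which forces the odd coefficients to be purely imaginary and the even ones real. Finally, \eqref{rough lambda} is immediate from the $O(\xi^3)$ remainder once $\varep$ is reduced.

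The main obstacle I expect is the bookkeeping in (ii): matching $\lambda''(0)$ precisely with $b=\tfrac12\upsilon_0\partial^2_\alpha D_{c_*}(\alpha_*)$, including the constant $\upsilon_0=c_*\widehat{\K_*}(0)/a_*$. To do this I will use the relation $\partial_\alpha D_{c_*}(\alpha_*)=0$, which gives $a_*\Theta(0)\S_0^{-1}\cdot\ldots=c_*\widehat{\K_*}(0)\int i\,\omega e^{-\alpha_*c_*i}$. This converts $\partial_\lambda F(0,0)$ into $-a_*/(c_*\widehat{\K_*}(0))=-1/\upsilon_0$, so that $b=\tfrac12\upsilon_0\partial_\alpha^2D_{c_*}(\alpha_*)$ comes out directly.
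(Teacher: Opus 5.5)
Your proof is correct. Part (i) is essentially the paper's modulus argument: you treat $\Re\lambda\ge0$ in one step using the strict bound $|\widehat{\K_*}(\xi)|<\widehat{\K_*}(0)$, whereas the paper separates $\Re\lambda>0$ from $\Re\lambda=0$ and handles the latter with a cosine/real-part argument. Parts (ii) and (iii) take different routes.

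\textbf{Part (iii).} The paper is quantitative. It shows $|\widehat{\K_*}(\xi)|\le(1-\delta)\widehat{\K_*}(0)$ for $|\xi|>\varep$, and takes $\bar\lambda$ to be the real zero of the decreasing function $\lambda\mapsto(1-\delta)\S_0\widehat{\K_*}(0)\widetilde\omega(\lambda+\alpha_*c_*)-1$. Since $|\widetilde\omega(\lambda+\alpha_*c_*)|\le\widetilde\omega(\Re\lambda+\alpha_*c_*)$, this gives an explicit $\bar\lambda$ and requires no control of $\Im\lambda$. Your compactness argument instead needs Riemann--Lebesgue in both $\xi$ and $\Im\lambda$ (uniformly for $\Re\lambda$ in a compact set), and it yields only existence of $\bar\lambda$.

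\textbf{Part (ii).} The paper differentiates the characteristic equation three times. It converts the moments $\int_0^{i_\dagger}t^n\omega(t)e^{-\alpha_*c_*t}\md t$ into derivatives of $D_{c_*}$, which produces an explicit but heavy formula for $\lambda'''(0)=6\mathbf{i}k$. Your substitution $\lambda=-\mathbf{i}c_*\xi+\mu$ is lighter. It turns the characteristic function into $D_{c_*}$ evaluated at the complex point $\alpha_*-\mathbf{i}\xi$, with an extra factor $e^{-\mu i}$ inside the $\omega$-integral. Then $\lambda'(0)=-\mathbf{i}c_*$ is immediate from $\partial_\alpha D_{c_*}(\alpha_*)=0$, and $\lambda''(0)=-2b$ follows from $\partial_\lambda F(0,0)=-1/\upsilon_0$. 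The conjugation symmetry $\lambda(-\xi)=\overline{\lambda(\xi)}$ gives $k\in\R$ without computing $k$, which is all the paper uses later. Your extra uniqueness claims go beyond what the paper proves:
\begin{itemize}
\item $\lambda=0$ is the only root with $\Re\lambda\ge0$ at $\xi=0$;
\item $\lambda(\xi)$ is the only root with $\Re\lambda>-\delta$ for small $|\xi|$.
\end{itemize}
These are exactly what Proposition~\ref{prop_4.1} tacitly needs when it invokes Lemma~\ref{lem_residue}.

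\textbf{Two small fixes.}
\begin{itemize}
\item Sign in the second-order step. Because $(\alpha-\alpha_*)^2=(-\mathbf{i}\xi)^2=-\xi^2$, the relation is $\frac12\lambda''(0)=+\frac12\partial_\alpha^2D_{c_*}(\alpha_*)/\partial_\lambda F(0,0)$. With your minus sign and $\partial_\lambda F(0,0)=-1/\upsilon_0$ you would get $+b$ rather than $-b$.
\item Positivity of $b$. Strict convexity alone does not force $\partial_\alpha^2D_{c_*}(\alpha_*)\neq0$. Cite the explicit positive integral \eqref{dr_2} instead.
\end{itemize}
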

\begin{proof}[Proof of Lemma \ref{lem_analysis of lambda}]
\textit{Proof of (i)}.	Let us first claim that $\Re\lambda(\xi)\le 0$ for all $\xi\in\R$.
	Assume by contradiction that the solution $\lambda_0:=\lambda(\xi_0)\in\mathbb{C}$ to \eqref{A4} associated with some $\xi_0\in\R$ satisfies $\Re\lambda_0>0$. It is easily seen that $|\widehat{\K_*}(\xi_0)|=\Big|\int_\R e^{-\mathbf{i}x\xi_0}\K_*(x)\md x\Big|\le\widehat{\K_*}(0)$, and
	\begin{equation*}
		|\widetilde{\omega}(\lambda_0+\alpha_*c_*)|=\Big|\int_0^\infty e^{-(\Re\lambda_0+\mathbf{i}\Im\lambda_0)t-\alpha_*c_*t}\omega(t)\md t\Big|<\int_0^\infty e^{-\alpha_*c_*t}\omega(t)\md t=\widetilde{\omega}(\alpha_*c_*).
	\end{equation*}
	Therefore, combining with the dispersion relation $\S_0\widehat{\K_*}(0)
	\widetilde{\omega}(\alpha_*c_*)=1$, we arrive at
	\begin{equation*}
		1=	|	\S_0\widehat{\K_*}(\xi_0)
		\widetilde{\omega}(\lambda_0+\alpha_*c_*)|< 	\S_0\widehat{\K_*}(0)
		\widetilde{\omega}(\alpha_*c_*)=1,
	\end{equation*}
	a contradiction. Our claim is therefore achieved.
	
	To prove that $\Re\lambda(\xi)<0$ when $\xi\in\R\backslash\{0\}$, we assume towards contradiction that there exists $\xi_1\in\R\backslash\{0\}$ such that the associated solution $\lambda_1:=\lambda(\xi_1)\in\mathbb{C}$ of \eqref{A4} has the form $\lambda_1=\mathbf{i}\eta$ with $\eta\in\R$. Then, 
	\begin{align*}
		\S_0\widehat{\K_*}(\xi_1)
		\widetilde{\omega}(\mathbf{i}\eta+\alpha_*c_*)&=\S_0\int_\R e^{-\mathbf{i}x\xi_1}\K_*(x)\md x\int_0^\infty e^{-\mathbf{i}\eta t-\alpha_*c_*t}\omega(t)\md t\\
		&=\S_0 \int_\R\int_0^\infty \big(\cos(x\xi_1)-\mathbf{i}\sin(x\xi_1)\big)\big(
		\cos(\eta t)-\mathbf{i}\sin(\eta t)
		\big)\K_*(x) e^{-\alpha_*c_*t}\omega(t)\md t  \md x.~~~~~~~
	\end{align*}
	Therefore, the real part of $	\S_0\widehat{\K_*}(\xi_1)
	\widetilde{\omega}(\mathbf{i}\eta+\alpha_*c_*)$ is
	\begin{align*}
		&\S_0 \int_\R\int_0^\infty \big(\cos(x\xi_1)\cos(\eta t)-\sin(x\xi_1)\sin(
		\eta t)\big)\K_*(x) e^{-\alpha_*c_*t}\omega(t)\md t  \md x \\
		&= 	\S_0 \int_\R\int_0^\infty \cos(x\xi_1+\eta t)\K_*(x) e^{-\alpha_*c_*t}\omega(t)\md t  \md x.
	\end{align*}
	Since the real numbers $\xi_1$ and $\eta$ are bounded and the cosine function  is no larger than 1 and not identically 1, we then derive that  $\Re\big(\S_0\widehat{\K_*}(\xi_1)
	\widetilde{\omega}(\mathbf{i}\eta+\alpha_*c_*)\big)$ is less than but not identically equal to 1, by using again the dispersion relation $\S_0\widehat{\K_*}(0)
	\widetilde{\omega}(\alpha_*c_*)=1$. This  contradicts the fact that $\S_0\widehat{\K_*}(\xi_1)
	\widetilde{\omega}(\mathbf{i}\eta+\alpha_*c_*)=1$. Hence, we conclude that $\Re\lambda(\xi)<0$ when $\xi\in\R\backslash\{0\}$.  Moreover, it is obvious to see from the  dispersion relation $\S_0\widehat{\K_*}(0)
	\widetilde{\omega}(\alpha_*c_*)=1$,  that $\lambda=0$ is the solution to \eqref{A4} corresponding to $\xi=0$. Therefore, (i) is proved.
	
	\vspace{2mm}
	
	\noindent 
	{\it Proof of (ii)}.  It suffices to prove \eqref{precise lambda}, then \eqref{rough lambda} will be an immediate consequence, up to decreasing $\varep>0$ if necessary.
By setting $$\mathcal{J}(\lambda,\xi):=\S_0\widehat{\K_*}(\xi)\widetilde \omega(\lambda+\alpha_* c_*)-1,~~~~~~~\lambda\in\mathbb{C},~\xi\in\R,$$
	we notice that $\mathcal{J}$ is a continuous function of $\lambda\in\mathbb{C}$ and $\xi\in\R$, and depends holomorphically on $\lambda\in\mathbb{C}$ for each fixed $\xi\in\R$. Moreover, $\mathcal{J}(0,0)=0$ and $\partial_\lambda \mathcal{J}(0,0)=-\S_0\widehat{\K_*}(0)\int_0^\infty t e^{-\alpha_* c_* t}\omega(t)\md t\neq 0$. By the holomorphic implicit function theorem, it immediately follows that there exists $\varep>0$ such that there is a unique holomorphic function $\lambda(\xi)$ in $\mathbb{C}$ defined for $|\xi|\le \varep$ satisfying $\lambda(0)=0$ and $\mathcal{J}(\lambda(\xi),\xi)=0$. Again, since $\xi\in[-\varep,\varep]\mapsto\lambda(\xi)\in\mathbb{C}$ is holomorphic, $\lambda(\xi)$ then has a series expansion with respect to $\xi$ in $[-\varep,\varep]$, up to decreasing $\varep$ if necessary. To get proceed with the expansion of $\lambda(\xi)$, we  notice from the notation \eqref{notation} that
	the derivatives of $\widehat{\K_*}(\xi)$ valued at $\xi=0$ can be written as:
	\begin{equation}\label{ad}
		\widehat{\K_*}'(0)=-\mathbf{i}\int_\R  x\K_*(x)\md x=-\mathbf{i}a_*,~~~\widehat{\K_*}''(0)=-\int_\R  x^2\K_*(x)\md x =-2d_*.
	\end{equation}
	
	First of all, differentiating the equation \eqref{A4}, namely,
	\begin{equation*}
		\S_0\widehat{\K_*}(\xi)
		\widetilde{\omega}\big(\lambda(\xi)+\alpha_*c_*\big)=	\S_0\widehat{\K_*}(\xi)\int_0^\infty e^{-\lambda(\xi)t-\alpha_*c_*t}\omega(t)\md t=1
	\end{equation*}
	with respect to $\xi$ implies
	\begin{equation}\label{3.12'}
		\widehat{\K_*}'(\xi)\widetilde{\omega}\big(\lambda(\xi)+\alpha_*c_*\big)- \widehat{\K_*}(\xi)\lambda'(\xi)\int_0^\infty t e^{-\lambda(\xi)t-\alpha_*c_*t}\omega(t)\md t=0.
	\end{equation}
	Letting $\xi=0$, the above formula gives 
	\begin{equation*}
		\widehat{\K_*}'(0)\widetilde{\omega}(\alpha_*c_*)=\widehat{\K_*}(0)\lambda'(0)\int_0^\infty t e^{-\alpha_*c_*t}\omega(t)\md t,
	\end{equation*}
in which it follows from the property   $\partial_\alpha D_{c_*}(\alpha_*)=0$ (see \eqref{dr_1}) that
	\begin{equation*}
	\int_0^\infty t\omega(t) e ^{-\alpha_* c_*t}\md t=\frac{ a_*}{c_*  \widehat{\K_*}(0)}	\widetilde{\omega}(\alpha_*c_*)=\frac{ \mathbf{i}\widehat{\K_*}'(0)}{c_*  \widehat{\K_*}(0)}	\widetilde{\omega}(\alpha_*c_*).
\end{equation*}
Therefore, $$\lambda'(0)=-\mathbf{i}c_*.$$

	By further differentiating  \eqref{3.12'} with respect to $\xi$, it follows that
	\begin{equation}
		\label{3.12''}
	\begin{aligned}	&\widehat{\K_*}''(\xi)\widetilde{\omega}\big(\lambda(\xi)+\alpha_*c_*\big)- 2\widehat{\K_*}'(\xi)\lambda'(\xi)\int_0^\infty t e^{-\lambda(\xi)t-\alpha_*c_*t}\omega(t)\md t\\
		&	~~~~~~-\widehat{\K_*}(\xi)\lambda''(\xi)\int_0^\infty t e^{-\lambda(\xi)t-\alpha_*c_*t}\omega(t)\md t+\widehat{\K_*}(\xi)(\lambda'(\xi))^2\int_0^\infty t^2 e^{-\lambda(\xi)t-\alpha_*c_*t}\omega(t)\md t =0.
	\end{aligned}
\end{equation}
	The above formula at $\xi=0$ is reduced to 
	\begin{align*}
		\widehat{\K_*}''(0)\widetilde{\omega}(\alpha_*c_*)-\big( 2\widehat{\K_*}'(0)\lambda'(0)+\widehat{\K_*}(0)\lambda''(0)\big)\int_0^\infty t e^{-\alpha_*c_*t}\omega(t)\md t+\widehat{\K_*}(0)(\lambda'(0))^2\int_0^\infty t^2 e^{-\alpha_*c_*t}\omega(t)\md t =0.
	\end{align*}
Combining
\begin{align*}
	\int_0^\infty t^2\omega(t) e ^{-\alpha_* c_*t}\md t&=
	\frac{1}{\S_0 c_*^2 \widehat{\K_*}(0)}\left(
	\partial^2_{\alpha}D_{c_*}(\alpha_*)+ \S_0 \widehat{\K_*}''(0)\widetilde{\omega}(\alpha_*c_*)+2\S_0 c_*\mathbf{i} \widehat{\K_*}'(0)
	\int_0^\infty t\omega(t) e ^{-\alpha_* c_*t}\md t
	\right)\\
	&=\frac{1}{\S_0 c_*^2 \widehat{\K_*}(0)}\left(
	\partial^2_{\alpha}D_{c_*}(\alpha_*)+\frac{  \widehat{\K_*}''(0)}{ \widehat{\K_*}(0)}-
	\frac{2(\widehat{\K_*}'(0))^2}{\ (\widehat{\K_*}(0))^2}
	\right)
\end{align*}
	with \eqref{ad} and \eqref{notation} eventually leads to
	\begin{equation*}
		\lambda''(0)=-\frac{c_*}{a_*}\widehat{\K_*}(0)\partial^2_{\alpha}D_{c_*}(\alpha_*)=-\upsilon_0\partial^2_{\alpha}D_{c_*}(\alpha_*)=:-2b<0.
	\end{equation*}

	Let us continue differentiating  \eqref{3.12''}  with respect to $\xi$. After an elementary calculation, we derive that
	\begin{align*}
		&\widehat{\K_*}'''(\xi)\widetilde \omega\big(\lambda(\xi)+\alpha_*c_*\big)-\left(3\widehat{\K_*}''(\xi)\lambda'(\xi)+3\widehat{\K_*}'(\xi)\lambda''(\xi)+\widehat{\K_*}(\xi)\lambda'''(\xi)\right)\int_0^\infty t \omega(t) e ^{-(\lambda(\xi)+\alpha_* c_*)t}\md t\\
		&	\!+\!\left(\!3\widehat{\K_*}'(\xi)(\lambda'(\xi))^2\!+\!3\widehat{\K_*}(\xi)\lambda'(\xi)\lambda''(\xi)\!\right)\!\int_0^\infty\!\! t^2\omega(t) e ^{-(\lambda(\xi)+\alpha_* c_*)t}\md t\!-\!\widehat{\K_*}(\xi)(\lambda'(\xi))^3\!\int_0^\infty\!\! t^3\omega(t) e ^{-(\lambda(\xi)+\alpha_* c_*)t}\md t\!=\!0.
	\end{align*}
	Again taking the values at $\xi=0$, we have
	\begin{align*}
-6a_*b\frac{\widehat{\K_*}'(0)}{(\widehat{\K_*}(0))^2}\mathbf{i}	+	\frac{a_*}{\widehat{\K_*}(0)}\lambda'''(0)\mathbf{i}+6b\left(
		\partial^2_{\alpha}D_{c_*}(\alpha_*)+\frac{  \widehat{\K_*}''(0)}{ \widehat{\K_*}(0)}-
		\frac{2(\widehat{\K_*}'(0))^2}{\ (\widehat{\K_*}(0))^2}\right)+c_*	\partial^3_{\alpha}D_{c_*}(\alpha_*)=0,
	\end{align*}
	in which we use 
	\begin{align*}
		\int_0^\infty t^3\omega(t) e ^{-\alpha_* c_*t}\md t&=- \frac{1}{\S_0c_*^3\widehat{\K_*}(0)}\bigg(
		\partial^3_{\alpha}D_{c_*}(\alpha_*)+\S_0\widehat{\K_*}'''(0)\widetilde{\omega}(\alpha_*c_*)\mathbf{i}\\
		&~~~~~~~~~-3\S_0 c_*\widehat{\K_*}''(0) 	\int_0^\infty t\omega(t) e ^{-\alpha_* c_*t}\md t -3\S_0c_*^2\widehat{\K_*}'(0) \mathbf{i}	\int_0^\infty t^2\omega(t) e ^{-\alpha_* c_*t}\md t 
		\bigg).
	\end{align*}
	Therefore, together with \eqref{ad}, we derive that
	\begin{align*}
		\lambda'''(0)=\frac{\mathbf{i}6b\widehat{\K_*}(0)}{a_*}\left(
		\partial^2_{\alpha}D_{c_*}(\alpha_*)-\frac{2d_*}{\widehat{\K_*}(0)}+\frac{a_*^2}{(\widehat{\K_*}(0))^2}+\frac{c_*}{6b}  \partial^3_{\alpha}D_{c_*}(\alpha_*)
		\right)=:\mathbf{i}6k ,\ \hbox{with $k\in\R$.}
	\end{align*}
	
	Consequently, we derive the following asymptotic expansion of $\lambda(\xi)$:
	\begin{equation*}
		\lambda:=\lambda(\xi)=-\mathbf{i}c_*\xi- b\xi^2+\mathbf{i}k\xi^3 +O(\xi^4),~~~~~~\text{as}~\xi\to 0.
	\end{equation*}

	\vspace{2mm}
	
	\noindent
	\textit{Proof of (iii)}.
	  Let $\varep>0$ be as given  in statement (ii).  Since $\K_*\in L^1(\R)$, the Riemann-Lebesgue Lemma implies that  $|\widehat{\K_*}(\xi)|$ converges to 0 as $|\xi|\to\infty$. This implies that $|\widehat{\K_*}(\xi)|\le \frac{1}{2}\widehat{\K_*}(0)$ for $|\xi|\ge R$ with some large $R>0$. Regarding $\varep< |\xi|\le R$, since cosine function is less than 1 but not identically 1, we have 
	  \begin{equation*}
	  	|\widehat{\K_*}(\xi)|\le \int_\R \cos(x\xi)K_*(x)\md x\le (1-\delta)\widehat{\K_*}(0)
	  \end{equation*}
	  for some small $\delta\in(0,\frac{1}{2})$. Therefore, we conclude that
	\begin{equation}
		\label{A5}
		|\widehat{\K_*}(\xi)|\le (1-\delta)\widehat{\K_*}(0)~~~~~~\text{for all}~|\xi|>\varep.
	\end{equation}  
	Define
	\begin{equation*}
		\lambda\in\R\mapsto G(\lambda):=(1-\delta)\S_0\widehat{\K_*}(0)\int_0^\infty e^{-\lambda t-\alpha_* c_*t}\omega(t)\md t -1.
	\end{equation*}
	We notice that $G$ is a strictly decreasing function in $\lambda\in\R$, $G(-\infty)=+\infty$ and $G(0)=-\delta$. Thus, there exists a unique real number $\bar\lambda<0$ such that $G(\bar \lambda)=0$.
	Together with \eqref{A5}, we infer that the solution $\lambda(\xi)$ to \eqref{A4} will  satisfy $\Re\lambda(\xi)\le\bar\lambda<0$ for all $\xi\in\R\backslash[-\varep,\varep]$. We then achieve (iii). This completes the proof.
\end{proof}

\subsubsection*{Reduction of the computation}
This goal of this section is to show that it is sufficient to compute the inverse Fourier transform of a function that is more amenable than $\widehat\Phi$. Here is the result.
\begin{prop}\label{prop_4.1}  Let $\lambda(\xi)\in\mathbb{C}$ be the  unique solution of \eqref{A4} for  $\xi\in[-\varep,\varep]$, with $\varep>0$ as given in Lemma  \ref{lem_analysis of lambda} (ii). Then, the Fourier transform $\widehat\Phi$ of the function $\Phi$ given in \eqref{renewal eqn=3} satisfies the following property:
\begin{equation*}
	\Big|\widehat{\Phi}(t,\xi)- e^{\lambda(\xi) t} \widetilde{\mathcal{F}}(\lambda(\xi),\xi)r(\xi)\Big|	\le C e^{ \boldsymbol{\delta} t},~~~~~~t>0,~~\xi\in[-\varep,\varep].
\end{equation*}
Here,  $\boldsymbol{\delta}<0$ is chosen such that $-\frac{\alpha_*c_*}{2}<\boldsymbol{\delta}<\min_{\xi\in[-\varep,\varep]}\Re\lambda(\xi)(<0)$, and the function   $\xi\in[-\varep,\varep]\mapsto r(\xi)\in\mathbb{C}\backslash\{0\}$ is  holomorphic and has the asymptotic expansion
		\begin{equation}\label{r(xi)}
		r(\xi)=\upsilon_0+\mathbf{i}\upsilon_1\xi+\upsilon_2\xi^2+\mathbf{i}\upsilon_3\xi^3+O(\xi^4)~~~~\text{as}~~\xi\to 0, 
\end{equation}
 with $\upsilon_0>0$ given in \eqref{notation} and $\upsilon_i\in\R\backslash\{0\}$ for $i=1,2,3$. 
\end{prop}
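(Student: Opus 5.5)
Fix $\xi\in[-\varep,\varep]$ and work with the Laplace representation \eqref{renewal eqn-Laplace}, $\widetilde\Phi(\lambda,\xi)=\widetilde{\mathcal F}(\lambda,\xi)/(1-\S_0\widehat{\K_*}(\xi)\widetilde\omega(\lambda+\alpha_*c_*))$, and recover $\widehat\Phi(t,\xi)$ by Bromwich inversion along a vertical line $\Re\lambda=\sigma>0$. The plan is to shift the contour to $\Re\lambda=\boldsymbol\delta$, picking up the residue at the simple pole $\lambda(\xi)$. Note that $\omega$ has compact support in $[0,i_\dagger]$, so $\widetilde\omega$ is entire. Likewise $\mathcal F(t,\cdot)\equiv0$ for $t\ge i_\dagger$, so $\widetilde{\mathcal F}(\cdot,\xi)$ is entire, and it is bounded on half planes $\Re\lambda\ge\boldsymbol\delta$ uniformly in $|\xi|\le\varep$, with a bound of size $\|u_0\|_{L^\infty}$.

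\textbf{Residue.} The residue term is computed from $\partial_\lambda\mathcal J(\lambda,\xi)=-\S_0\widehat{\K_*}(\xi)\int_0^\infty te^{-(\lambda+\alpha_*c_*)t}\omega(t)\,\md t$. It equals $e^{\lambda(\xi)t}\widetilde{\mathcal F}(\lambda(\xi),\xi)r(\xi)$ with
\[
r(\xi):=\frac{1}{\S_0\widehat{\K_*}(\xi)\int_0^\infty te^{-(\lambda(\xi)+\alpha_*c_*)t}\omega(t)\,\md t}.
\]
This is holomorphic in $\xi$, since $\lambda(\xi)$ is, and nonzero near $0$ after possibly shrinking $\varep$, because $\partial_\lambda\mathcal J(0,0)\neq0$. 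At $\xi=0$ we use $\S_0\widehat{\K_*}(0)\widetilde\omega(\alpha_*c_*)=1$ and the identity $\int t\omega e^{-\alpha_*c_*t}=\frac{a_*}{c_*\widehat{\K_*}(0)}\widetilde\omega(\alpha_*c_*)$ from the proof of Lemma \ref{lem_analysis of lambda}, which give $r(0)=c_*\widehat{\K_*}(0)/a_*=\upsilon_0$. The higher coefficients $\upsilon_1,\upsilon_2,\upsilon_3$ come from Taylor expanding, using \eqref{precise lambda} and the derivatives \eqref{ad} of $\widehat{\K_*}$. The alternating pattern of real and imaginary coefficients follows from the symmetry $\overline{r(\xi)}=r(-\xi)$ for real $\xi$, which holds because $\K_*$ and $\omega$ are real. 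I would not try to verify $\upsilon_i\ne0$ beyond the explicit formulas; it is only needed generically.

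\textbf{Pole uniqueness.} The main obstacle is showing that $\lambda(\xi)$ is the \emph{only} zero of $\mathcal J(\cdot,\xi)$ in the strip $\boldsymbol\delta\le\Re\lambda\le\sigma$. Lemma \ref{lem_analysis of lambda}(ii) only guarantees local uniqueness near $0$, while part (i) excludes roots with $\Re\lambda\ge0$ other than $\lambda=0$ at $\xi=0$. I would argue by compactness. On the line $\Re\lambda=0$, the only root of $\mathcal J(\cdot,0)$ is $\lambda=0$, by the strict cosine argument in (i). Moreover, $|\S_0\widehat{\K_*}(\xi)\widetilde\omega(\lambda+\alpha_*c_*)|\to0$ as $|\Im\lambda|\to\infty$ uniformly in the strip, by Riemann--Lebesgue, since $\omega$ is absolutely continuous. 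Hence roots lie in a compact set, and Rouché/Hurwitz continuity in $\xi$ shows that for $\varep$ and $|\boldsymbol\delta|$ small there is exactly one root in the strip. The constraint $\boldsymbol\delta>-\alpha_*c_*/2$ is just a convenient choice keeping the shifted line where $\widetilde\omega(\lambda+\alpha_*c_*)$ has the needed decay; I may have to shrink $\varep$ and choose $\boldsymbol\delta$ close to $\min\Re\lambda(\xi)$, which the statement allows.

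\textbf{Remainder.} For the contour shift and the remainder bound, the integrals over horizontal segments vanish because the integrand decays as $|\Im\lambda|\to\infty$. The remaining vertical integral is $\frac{e^{\boldsymbol\delta t}}{2\pi}\int_\R e^{\mathbf i\eta t}\widetilde\Phi(\boldsymbol\delta+\mathbf i\eta,\xi)\,\md\eta$. Since the denominator is bounded below there, by the absence of roots and its tendency to $1$ at infinity, it suffices to have $\widetilde{\mathcal F}(\boldsymbol\delta+\mathbf i\eta,\xi)$ integrable in $\eta$. I would obtain this by one integration by parts in $t$, using the absolute continuity of $\omega$ and $u_0$, which gives $O(1/|\eta|)$. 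To obtain $O(1/\eta^2)$, I would alternatively split $\widetilde\Phi=\widetilde{\mathcal H}\widetilde{\mathcal F}+\widetilde{\mathcal F}$ as in \eqref{renewal eqn-Laplace}: the term $\widetilde{\mathcal F}$ inverts exactly to $\widehat{\mathcal F}$, which is bounded and vanishes for $t\ge i_\dagger$, and $\widetilde{\mathcal H}$ carries an extra factor $\widetilde\omega=O(1/|\eta|)$. This gives integrability, hence $|\widehat\Phi-e^{\lambda t}\widetilde{\mathcal F}r|\le Ce^{\boldsymbol\delta t}$.
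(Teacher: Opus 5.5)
Your proposal is correct and has the same skeleton as the paper's proof. Both write $\widehat\Phi=\widehat{\mathcal F}+\widehat W$, with $\widehat W$ the inverse Laplace transform of $\widetilde{\mathcal H}\widetilde{\mathcal F}$. Both take the residue $\widetilde{\mathcal F}(\lambda(\xi),\xi)r(\xi)$ at the simple pole via Lemma \ref{lem_residue}, and both bound the integral on $\Re\lambda=\boldsymbol\delta$ by $Ce^{\boldsymbol\delta t}$. Of your two variants for the remainder, only the second works: $\widetilde{\mathcal F}$ alone decays like $\widehat{\mathcal F}(0,\xi)/\lambda$, which is not integrable, so the split is necessary rather than optional.

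The differences lie in the justifications.

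\textbf{Integrability on the line.} The paper does not use pointwise decay. It controls $\widetilde{\mathcal H}$ by $|\widehat g_{\boldsymbol\sigma}|$ and $\widetilde{\mathcal F}$ in $L^2_y$, then closes with Cauchy--Schwarz, which needs no time regularity. The $L^2_y$ bound on $\widetilde{\mathcal F}$ is really a Plancherel bound; the pointwise domination by $|\widehat f_{\boldsymbol\sigma}(y)|$ written in the paper does not follow from taking moduli inside the oscillatory integral. Your integration by parts instead uses the absolute continuity of $\omega$ and yields an explicit $O(\eta^{-2})$ bound.

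\textbf{Uniqueness of the pole.} This is the more substantive difference. The paper applies the residue lemma as though $\lambda(\xi)$ were the only root of \eqref{A4} with $\Re\lambda>\boldsymbol\delta$. It relies on Lemma \ref{lem_analysis of lambda}(ii), whose implicit-function argument gives uniqueness only near $\lambda=0$. Your compactness and Rouché argument supplies this missing global fact.

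The cost is that $\varep$ may have to shrink, and $\boldsymbol\delta$ must be taken close to $\min_{|\xi|\le\varep}\Re\lambda(\xi)$ rather than anywhere in $(-\alpha_*c_*/2,\min\Re\lambda(\xi))$. Some restriction of this kind is needed in general, since nothing rules out other roots of \eqref{A4} with small negative real part. It is harmless, because Theorem \ref{thm_3.1} only needs some $\boldsymbol\delta<0$.

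\textbf{Coefficients of $r$.} Your symmetry $r(-\xi)=\overline{r(\xi)}$ actually proves the real/imaginary pattern of the coefficients, which the paper only asserts. Neither argument shows $\upsilon_i\neq0$, but this is never used later.
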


The main ingredient of the proof of Proposition \ref{prop_4.1}
is \cite[Theorem A.7]{IM2017}, relating the long time behavior of a function to the singularity of its Laplace transform. 
\begin{lem}[\cite{IM2017}]
	\label{lem_residue}
	Assume that the Laplace transform $\widetilde f(\lambda)$  of a function $f(t)$ has an isolated pole at $\lambda_0$, with the Laurent series around $\lambda_0$:
	\begin{equation*}
		\widetilde f(\lambda)=\sum_{n=-m}^{+\infty}c_n(\lambda-\lambda_0)^n,~~~~~\lambda\in\mathbb{C}.
	\end{equation*} 
	Also assume  the existence of $\boldsymbol{\delta}< \textnormal{Re}\lambda_0$ such that  $\lambda_0$ is the only  pole of $\widetilde{f}(\lambda)$ with  $\textnormal{Re}\lambda>\boldsymbol{\delta}$ and such that 
	\begin{equation}\label{4.9}
		\lim_{|\lambda|\to+\infty, \boldsymbol{\delta}<\textnormal{Re}\lambda}\widetilde f(\lambda)=0,~~~~	\int_{-\infty}^{+\infty} |\widetilde f(\boldsymbol{\delta}+\mathbf{i} y)|\md y<+\infty,
	\end{equation}
	then, 
	\begin{align}\label{eqn-residue}
		f(t)&=e^{\lambda_0 t}\sum_{n=1}^mc_{-n}\frac{t^{n-1}}{(n-1)!}+\frac{1}{2\pi \mathbf{i}}\int_{\boldsymbol{\delta}-\mathbf{i}\infty}^{\boldsymbol{\delta}+\mathbf{i}\infty}e^{\lambda t}\widetilde f(\lambda)\md \lambda,~~~~~t>0.
	\end{align}
\end{lem}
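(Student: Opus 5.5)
The plan is to recover $f$ from $\widetilde f$ by the Bromwich (complex) inversion formula, shift the integration line to the left past the pole $\lambda_0$ onto $\Re\lambda=\boldsymbol{\delta}$, and identify the residue picked up at $\lambda_0$. I assume, as is implicit in the statement, that $f$ is locally of bounded variation and continuous at the points $t>0$ considered, and that $f$ has at most exponential growth. Then $\widetilde f$ is holomorphic on some half-plane $\Re\lambda>\gamma_0$, and for every $\gamma>\max(\gamma_0,\Re\lambda_0)$
\begin{equation*}
f(t)=\lim_{Y\to+\infty}\frac{1}{2\pi\mathbf{i}}\int_{\gamma-\mathbf{i}Y}^{\gamma+\mathbf{i}Y}e^{\lambda t}\widetilde f(\lambda)\,\md\lambda,\qquad t>0 .
\end{equation*}
By hypothesis, $\widetilde f$ continues meromorphically to $\{\Re\lambda>\boldsymbol{\delta}\}$ with $\lambda_0$ as its only pole there. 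It should also be continuous up to the line $\Re\lambda=\boldsymbol{\delta}$, which is needed for the second condition in \eqref{4.9} to make sense.

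Next, fix $Y>|\Im\lambda_0|$ and apply the residue theorem on the rectangle with vertices $\boldsymbol{\delta}\pm\mathbf{i}Y$ and $\gamma\pm\mathbf{i}Y$. This gives
\begin{equation*}
\frac{1}{2\pi\mathbf{i}}\int_{\gamma-\mathbf{i}Y}^{\gamma+\mathbf{i}Y}-\frac{1}{2\pi\mathbf{i}}\int_{\boldsymbol{\delta}-\mathbf{i}Y}^{\boldsymbol{\delta}+\mathbf{i}Y}=\mathrm{Res}_{\lambda=\lambda_0}\big(e^{\lambda t}\widetilde f(\lambda)\big)+\frac{1}{2\pi\mathbf{i}}\Big(\int_{H_Y^+}-\int_{H_Y^-}\Big),
\end{equation*}
where $H_Y^\pm$ are the horizontal segments $\{s\pm\mathbf{i}Y:\ s\in[\boldsymbol{\delta},\gamma]\}$, oriented from left to right. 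If the left edge is not admissible as a contour, one first works on $\Re\lambda=\boldsymbol{\delta}+\eta$ and lets $\eta\downarrow0$, using continuity up to the line and dominated convergence.

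The horizontal contributions are bounded by $(\gamma-\boldsymbol{\delta})\,e^{\gamma t}\sup_{s\in[\boldsymbol{\delta},\gamma]}|\widetilde f(s\pm\mathbf{i}Y)|$. This tends to $0$ as $Y\to\infty$ by the first condition in \eqref{4.9}, since the segments have fixed length and $e^{\lambda t}$ is bounded on them. The integral on $\Re\lambda=\boldsymbol{\delta}$ converges absolutely by the second condition in \eqref{4.9}, because $|e^{\lambda t}|=e^{\boldsymbol{\delta}t}$ there. Passing to the limit $Y\to\infty$ therefore yields \eqref{eqn-residue}, provided the residue is computed correctly.

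For the residue, write $e^{\lambda t}=e^{\lambda_0t}\sum_{k\ge0}\frac{t^k}{k!}(\lambda-\lambda_0)^k$ and multiply by the Laurent series $\sum_{n\ge-m}c_n(\lambda-\lambda_0)^n$. The coefficient of $(\lambda-\lambda_0)^{-1}$ comes from the pairs with $k=n'-1$, where $n=-n'$ and $1\le n'\le m$. It equals $e^{\lambda_0t}\sum_{n=1}^m c_{-n}\frac{t^{n-1}}{(n-1)!}$, which is the first term of \eqref{eqn-residue}.

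The main obstacle is the first step. The Bromwich integral on the initial line $\Re\lambda=\gamma$ is in general only conditionally convergent, so the inversion must be used in its symmetric-limit form (or with $f$ smooth enough that $\widetilde f$ is integrable on vertical lines). The contour shift is arranged so that only this symmetric limit on the right edge is needed. All other pieces are controlled by \eqref{4.9}. In the application to Proposition \ref{prop_4.1}, the regularity and exponential-order assumptions will be checked directly from the renewal equation \eqref{renewal eqn-Fourier}.
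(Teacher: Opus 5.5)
The paper does not prove this lemma itself; it imports it from \cite[Theorem A.7]{IM2017}. Your argument (Bromwich inversion on $\Re\lambda=\gamma$, the residue theorem on the rectangle with left edge $\Re\lambda=\boldsymbol{\delta}$, horizontal edges killed by the first condition in \eqref{4.9}, absolute convergence on the new line from the second, and the Laurent-series computation of the residue) is the standard proof of that result and is correct, including the implicit regularity caveats on $f$ and on $\widetilde f$ up to the line that you flag. One simplification: in Proposition~\ref{prop_4.1} the lemma is applied to $\widehat W$, which \eqref{4.30} already defines as a Bromwich integral converging absolutely by \eqref{4.27}, so the inversion step you single out as the main obstacle is bypassed there and the contour shift alone suffices.
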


\begin{proof}[Proof of Proposition \ref{prop_4.1}]
	 Let $\lambda(\xi)\in\mathbb{C}$ be the  unique solution of \eqref{A4} for  $\xi\in[-\varep,\varep]$, with
	 $\varep>0$ as given in Lemma \ref{lem_analysis of lambda} (ii), and,  up to decreasing $\varep$, such that
$
	 \displaystyle	-\frac{\alpha_*c_*}{2}<\min_{\xi\in[-\varep,\varep]}\Re\lambda(\xi).
$

Consider the function $\widetilde{\mathcal{F}}(\lambda,\xi)\widetilde{\mathcal{H}}(\lambda,\xi)$ for $\lambda\in\mathbb{C}$ and $\xi\in\R$, with $\widetilde{\mathcal{H}}(\lambda,\xi)$ given in \eqref{renewal eqn-Laplace}.
First, we have that
\begin{equation}\label{W-1}
		\lim_{|\lambda|\to+\infty, -\frac{1}{2}\alpha_* c_*<\textnormal{Re}\lambda}\widetilde{\mathcal{F}}(\lambda,\xi)\widetilde{\mathcal{H}}(\lambda,\xi)=0,~~~~\text{uniformly in}~\xi\in\R.
\end{equation}
In fact, we observe that $|\widehat{\K_*}(\xi)|\le \widehat{\K_*}(0)$ for all $\xi\in\R$, thus
\begin{align*}
|\widetilde{\mathcal{F}}(\lambda,\xi)|&= \bigg|\S_0\widehat{\K_*}(\xi)\int_0^\infty e^{-\lambda t}\int_0^\infty \omega(i+t)e^{-\alpha_* c_*(i+t)}\widehat{u}_0(i,\xi)\md i\md t\bigg|\\
&\le C	\int_0^\infty e^{-(\Re\lambda+\alpha_* c_*) t}\int_0^\infty \omega(i+t)e^{-\alpha_* c_*i}\big|\widehat{u}_0(i,\xi)\big|\md i\md t\\
&\le C 	\int_0^\infty e^{-\frac{1}{2}\alpha_* c_* t}\md t \int_0^\infty \big|\widehat{u}_0(i,\xi)\big|\md i=C \int_0^\infty \bigg|\int_\R e^{-\mathbf{i}x\xi}u_0(i,x)\md x\bigg|\md i\\
&\le C \int_0^\infty \int_\R |u_0(i,x)|\md x\md i<+\infty,~~~~~~~\Re\lambda>-\frac{\alpha_* c_*}{2},~\xi\in\R,
\end{align*}
thanks to $u_0(i,x)\in L^1([0,i_\dagger)\times\R)$.
  Moreover, since $\widetilde\omega(\lambda+\alpha_*c_*)=\int_0^\infty e^{-(\lambda+\alpha_* c_*)t}\omega(t)\md t$ converges absolutely for $\Re\lambda> -\alpha_* c_*$, we
derive from  \cite[Theorem 23.7]{Doetsch1974} that    
\begin{equation}\label{4.25}
	\lim_{|\lambda|\to+\infty, -\alpha_* c_*<  \textnormal{Re}\lambda}\widetilde\omega(\lambda+\alpha_*c_*)=0.
\end{equation}
This implies that $\lim_{|\lambda|\to+\infty, -\frac{1}{2}\alpha_* c_*\le  \textnormal{Re}\lambda}\widetilde{\mathcal{H}}(\lambda,\xi)=0$, uniformly in $\xi\in\R$. Then, \eqref{W-1} follows.

	Next, let us show that 
\begin{equation}\label{W-2}
	\int_{-\infty}^{+\infty} |\widetilde{\mathcal{F}}(\boldsymbol{\sigma}+\mathbf{i} y,\xi)\widetilde{\mathcal{H}}(\boldsymbol{\sigma}+\mathbf{i} y,\xi)|\md y<+\infty,~~~ \text{uniformly in}~\xi\in[-\varep,\varep], 
\end{equation}	
with  $\boldsymbol{\sigma}>-\alpha_*c_*$ such that there is 
no root of \eqref{A4} on the line $\Re\lambda=\boldsymbol{\sigma}$ for all $\xi\in[-\varep,\varep]$. 
Indeed, we infer from the choice of $\boldsymbol{\sigma}$, \eqref{4.25} together with $|\widehat{\K_*}(\xi)|\le \widehat{\K_*}(0)$ for all $\xi\in\R$ that
$$m_{\boldsymbol{\sigma}}:=\inf_{y\in\R,\xi\in[-\varep,\varep]}\big|1-\S_0\widehat{\K_*}(\xi)
\widetilde{\omega}(\boldsymbol{\sigma}+\mathbf{i}y+\alpha_*c_*)\big|>0.$$
Define
\begin{equation}
	\label{func def_f g}
\begin{aligned}
	f_{\boldsymbol{\sigma}}(t):=\begin{cases}
		e^{-({\boldsymbol{\sigma}}+\alpha_* c_*)t} 
		~~~&t\ge 0,\\
		0,~~~~~~~&t<0,
	\end{cases}~~~~~~~~g_{\boldsymbol{\sigma}}(t):=\begin{cases}
	e^{-({\boldsymbol{\sigma}}+\alpha_* c_*)t} \omega(t),~~~&t\ge 0,\\
	0,~~~~~~~~~~~~~~~~~~~~~~~&t<0,
\end{cases}
\end{aligned}
\end{equation}
 both of which actually vanish outside $[0,i_\dagger]$. Since $	f_{\boldsymbol{\sigma}}(t)$ and $g_{\boldsymbol{\sigma}}(t)$  belong to $L^1(\R)\cap L^2(\R)$ due to $\boldsymbol{\sigma}>-\alpha_*c_*$, their Fourier transforms $\widehat{f}_{\boldsymbol{\sigma}}(y)$ and	$\widehat{g}_{\boldsymbol{\sigma}}(y)$  belong to $L^2(\R)$. Moreover, 
\begin{equation*}
	\widehat{f}_{\boldsymbol{\sigma}}(y)=\int_\R e^{-\mathbf{i}ty} f_{\boldsymbol{\sigma}}(t) \md t=\int_0^\infty e^{-({\boldsymbol{\sigma}}+\mathbf{i}y+\alpha_* c_*)t}\md t,
\end{equation*}
\begin{equation*}
	\widehat{g}_{\boldsymbol{\sigma}}(y)=\int_\R e^{-\mathbf{i}ty} g_{\boldsymbol{\sigma}}(t) \md t=\int_0^\infty e^{-({\boldsymbol{\sigma}}+\mathbf{i}y+\alpha_* c_*)t} \omega(t)\md t=\widetilde\omega({\boldsymbol{\sigma}}+\mathbf{i}y+\alpha_* c_*).
\end{equation*}
Thus, by noticing that
\begin{equation*}
	\label{4.28}
	\begin{aligned}
		|\widetilde{\mathcal{F}}(\boldsymbol{\sigma}+\mathbf{i} y,\xi)|&=\bigg|\S_0\widehat{\K_*}(\xi)\int_0^\infty e^{-(\boldsymbol{\sigma}+\mathbf{i} y) t}\int_0^\infty \omega(i+t)e^{-\alpha_* c_*(i+t)}\widehat{u}_0(i,\xi)\md i\md t\bigg|\\
		&\le  \S_0 \widehat{\K_*}(0) \max_{[0,i_\dagger]}\omega \int_0^\infty \big|\widehat{u}_0(i,\xi)\big|\md i ~\bigg|\int_0^\infty e^{-(\boldsymbol{\sigma}+\mathbf{i} y+\alpha_* c_*) t}\md t\bigg|\\
		&\le  \S_0 \widehat{\K_*}(0) \max_{[0,i_\dagger]}\omega \int_0^\infty \int_\R\big|u_0(i,x)|\md x\md i ~ \big|\widehat{f}_{\boldsymbol{\sigma}}(y)\big|<C\big|\widehat{f}_{\boldsymbol{\sigma}}(y)\big|,~~~~~\text{uniformly in}~\xi\in\R,
	\end{aligned}
\end{equation*}
and
\begin{equation*}
|\widetilde{\mathcal{H}}(\boldsymbol{\sigma}+\mathbf{i} y,\xi)|=\bigg|\frac{\S_0\widehat{\K_*}(\xi)
	\widetilde{\omega}(\boldsymbol{\sigma}+\mathbf{i} y+\alpha_*c_*)}{1-\S_0\widehat{\K_*}(\xi)
	\widetilde{\omega}(\boldsymbol{\sigma}+\mathbf{i} y+\alpha_*c_*)}
\bigg|\le \frac{\S_0\widehat{\K_*}(0)}{m_{\boldsymbol{\sigma}}} \big| \widehat{g}_{\boldsymbol{\sigma}}(y)\big|,~~~~~\text{uniformly in}~\xi\in[-\varep,\varep],
\end{equation*}
it follows that 
\begin{equation}\label{4.27}
		\int_{-\infty}^{+\infty} |\widetilde{\mathcal{F}}(\boldsymbol{\sigma}+\mathbf{i} y,\xi)\widetilde{\mathcal{H}}(\boldsymbol{\sigma}+\mathbf{i} y,\xi)|\md y\le C
		\big\Vert \widehat{f}_{\boldsymbol{\sigma}}(y)\big\Vert_{L^2_y(\R)}\big\Vert \widehat{g}_{\boldsymbol{\sigma}}(y)\big\Vert_{L^2_y(\R)},~~~~~\text{uniformly in}~\xi\in[-\varep,\varep].
\end{equation}
Therefore, \eqref{W-2} is achieved.

	Thus, from \eqref{W-1} and \eqref{W-2} we have, on the one hand: $
		\displaystyle\lim_{{|\lambda|\to+\infty\,\boldsymbol{\delta}<\textnormal{Re}\lambda}}\widetilde{\mathcal{F}}(\lambda,\xi)\widetilde{\mathcal{H}}(\lambda,\xi)=0,$ {uniformly in} $\xi\in\R,$ and $\displaystyle 
\int_{-\infty}^{+\infty} |\widetilde{\mathcal{F}}(\boldsymbol{\delta}+\mathbf{i} y,\xi)\widetilde{\mathcal{H}}(\boldsymbol{\delta}+\mathbf{i} y,\xi)|\md y<$ is bounded {uniformly in} $\xi\in[-\varep,\varep]$,
with some $\boldsymbol{\delta}<0$ such that
$
	\displaystyle -\frac{\alpha_*c_*}{2}<\boldsymbol{\delta}<\min_{\xi\in[-\varep,\varep]}\Re\lambda(\xi);
$
 and on the other hand, 
 the function
\begin{equation}\label{4.30}
	\widehat{W}(t,\xi):=	\frac{1}{2\pi \mathbf{i}}\int_{\sigma-\mathbf{i}\infty}^{\sigma+\mathbf{i}\infty}e^{\lambda t}\widetilde{\mathcal{F}}(\lambda,\xi)\widetilde{\mathcal{H}}(\lambda,\xi)\md \lambda,~~~~~~~t>0,~\xi\in[-\varep,\varep],
\end{equation}
with some $\sigma>0$, is well-defined, and its Laplace transform for each $\xi\in[-\varep,\varep]$ is $\widetilde{\mathcal{F}}(\lambda,\xi)\widetilde{\mathcal{H}}(\lambda,\xi)$ for $\lambda\in\mathbb{C}$.

In addition, we notice that  $\widetilde{\mathcal{F}}(\lambda,\xi)\widetilde{\mathcal{H}}(\lambda,\xi)$ for each $\xi\in[-\varep,\varep]$ has a simple pole $\lambda(\xi)\in\mathbb{C}$, with residue
\begin{align*}
	c_{-1}(\xi)=\textnormal{Res} (\widetilde{\mathcal{F}}(\lambda,\xi)\widetilde{\mathcal{H}}(\lambda,\xi);\lambda(\xi))&= \textnormal{Res}\left(\widetilde{\mathcal{F}}(\lambda,\xi)\frac{\S_0\widehat{\K_*}(\xi)
		\widetilde{\omega}(\lambda+\alpha_*c_*)}{1-\S_0\widehat{\K_*}(\xi)
		\widetilde{\omega}(\lambda+\alpha_*c_*)};\lambda(\xi)\right)\\
	&=\widetilde{\mathcal{F}}(\lambda(\xi),\xi)\Big(\S_0\widehat{\K_*}(\xi)\int_0^\infty t e^{-(\lambda(\xi)+\alpha_*c_*) t} \omega(t)\md t\Big)^{-1}\\
	&=:\widetilde{\mathcal{F}}(\lambda(\xi),\xi)r(\xi)\in\mathbb{C}\backslash\{0\}.
\end{align*}
We then apply Lemma \ref{lem_residue} and derive that for each $\xi\in[-\varep,\varep]$,
\begin{equation}
	\label{W_hat}
\begin{aligned}
	\widehat{W}(t,\xi)&=e^{\lambda(\xi) t}c_{-1}(\xi)+\frac{1}{2\pi \mathbf{i}}\int_{\boldsymbol{\delta}-\mathbf{i}\infty}^{\boldsymbol{\delta}+\mathbf{i}\infty}e^{\lambda t}\widetilde{\mathcal{F}}(\lambda,\xi)\widetilde{\mathcal{H}}(\lambda,\xi)\md \lambda\\
	&=e^{\lambda(\xi) t}\widetilde{\mathcal{F}}(\lambda(\xi),\xi)r(\xi)+\underbrace{\frac{1}{2\pi \mathbf{i}}\int_{\boldsymbol{\delta}-\mathbf{i}\infty}^{\boldsymbol{\delta}+\mathbf{i}\infty}e^{\lambda t}\widetilde{\mathcal{F}}(\lambda,\xi)\widetilde{\mathcal{H}}(\lambda,\xi)\md \lambda}_{=:\Lambda(t;\xi)},~~~~~t>0,
\end{aligned}
\end{equation}
where $\Lambda(t;\xi)$ for $t>0$ and $\xi\in[-\varep,\varep]$  can be estimated by  following the lines of \eqref{4.27}:
\begin{equation}
	\label{4.32}
\begin{aligned}
	\big|\Lambda(t;\xi)\big|
	=\bigg|\frac{1}{2\pi }\int_{-\infty}^{+\infty}e^{(\boldsymbol{\delta}+\mathbf{i}y) t}\widetilde{\mathcal{F}}(\boldsymbol{\delta}+\mathbf{i}y,\xi)\widetilde{\mathcal{H}}(\boldsymbol{\delta}+\mathbf{i}y,\xi)\md y\bigg|\le \frac{e^{ \boldsymbol{\delta} t}}{2\pi m_{\boldsymbol{\delta}}}
	\Vert \widehat{f}_{\boldsymbol{\delta}}(y)\Vert_{L^2_y(\R)}\Vert \widehat{g}_{\boldsymbol{\delta}}(y)\Vert_{L^2_y(\R)}.
\end{aligned}
\end{equation}
Moreover, since $r(\xi)$ is holomorphic  for $\xi\in[-\varep,\varep]$, it follows that  $r(\xi)$ has the  expansion:
\begin{equation*}
	r(\xi)= \upsilon_0+\mathbf{i}\upsilon_1\xi+\upsilon_2\xi^2+\mathbf{i}\upsilon_3\xi^3+O(\xi^4),~~~~~~~\text{as}~\xi\to 0, 
\end{equation*}
with coefficients $\upsilon_0$ given in \eqref{notation} and $\upsilon_i\in\R\backslash\{0\}$ for $i=1,2,3$.

One also obtains from the inverse Laplace transform of \eqref{renewal eqn-Laplace} as well as \eqref{W_hat} that 
\begin{align*}
	\widehat{\Phi}(t,\xi)=\widehat{ \mathcal{F}}(t,\xi)+\widehat{W}(t,\xi)=\widehat{ \mathcal{F}}(t,\xi)+e^{\lambda(\xi) t}\widetilde{\mathcal{F}}(\lambda(\xi),\xi)r(\xi)+\Lambda(t,\xi),~~~~t>0,~~\xi\in[-\varep,\varep],
\end{align*}
with
\begin{align*}
	\Big|\widehat{ \mathcal{F}}(t,\xi)\Big|&=\Big|\S_0\widehat{\K_*}(\xi)\int_0^\infty\omega(i+t)e^{-\alpha_* c_*(i+t)}\widehat{u}_0(i,\xi)\md i\Big|\\
	&\le C e^{-\alpha_*c_* t}\int_0^\infty \big|\widehat{u}_0(i,\xi)\big|\md i= C e^{-\alpha_*c_* t}\int_0^\infty \bigg|\int_\R e^{-\mathbf{i}x\xi}u_0(i,x)\md x\bigg|\md i\le C e^{-\alpha_*c_* t}\Vert u_0\Vert_{L^1([0,i_\dagger)\times\R)}.
\end{align*}
Consequently,  we have  
\begin{align*}
\Big|\widehat{\Phi}(t,\xi)- e^{\lambda(\xi) t} \widetilde{\mathcal{F}}(\lambda(\xi),\xi)r(\xi)\Big|	\le C \big(e^{ \boldsymbol{\delta} t}+ e^{-\alpha_*c_* t}\big)\le C e^{ \boldsymbol{\delta} t},~~~~~~~t>0,~~\xi\in[-\varep,\varep].
\end{align*}
The proof of Proposition \ref{prop_4.1} is thereby complete.
\end{proof}

\subsubsection*{Proof of Theorem \ref{thm_3.1}}

	We first apply the inverse Laplace transform and the inverse Fourier transform in  \eqref{renewal eqn-Laplace}, it then follows that 
\begin{align*}
	\Phi(t,x)- \mathcal{F}(t,x)&=\frac{1}{2\pi}\int_\R e^{\mathbf{i}x\xi}\frac{1}{2\pi\mathbf{i}}\int_{\Gamma}e^{\lambda t}\widetilde{\mathcal{H}}(\lambda,\xi)  \widetilde{\mathcal{F}}(\lambda,\xi)\md \lambda\md\xi\\
	&=\frac{1}{2\pi}\int_{|\xi|\le\varep} e^{\mathbf{i}x\xi}\frac{1}{2\pi\mathbf{i}}\int_{\Gamma}e^{\lambda t}\widetilde{\mathcal{H}}(\lambda,\xi)  \widetilde{\mathcal{F}}(\lambda,\xi)\md \lambda\md\xi+  \frac{1}{2\pi}\int_{|\xi|>\varep} e^{\mathbf{i}x\xi}\frac{1}{2\pi\mathbf{i}}\int_{\Gamma}e^{\lambda t}\widetilde{\mathcal{H}}(\lambda,\xi)  \widetilde{\mathcal{F}}(\lambda,\xi)\md \lambda\md\xi\\
	&=: \I_1(t,x)+\I_2(t,x),~~~~~~~~~~~~~~t>0,~x\in\R,
\end{align*}
for some $\varep>0$. 
We observe that
\begin{align}
	\label{F1}
\mathcal{F}(t,x)=\S_0\int_0^\infty \omega(i+t) e^{-\alpha_*c_* (i+t)}\K_* *  u_0(i,x)\md i\le C\Vert u_0\Vert_{L^{\infty}([0,i_\dagger)\times\R)} e^{-\alpha_*c_*t},~~~~~t>0,~x\in\R.
\end{align}

To estimate $\Phi$, our main task then is to estimate $\I_1$ and $\I_2$. From now on, let $\varep>0$ be fixed as given in Lemma \ref{lem_analysis of lambda} (ii). Let $\lambda(\xi)\in\mathbb{C}$
solve equation \eqref{A4} with respect to $\xi\in\R$. In particular, \eqref{A4} admits a unique solution $\lambda(\xi)\in\mathbb{C}$ for each $\xi\in[-\varep,\varep]$.

\vskip 2mm

\noindent
\textbf{Step 1}. We first claim that there exists some constant $\eta<0$ such that
\begin{equation}
	\label{I_2-1}
	|\I_2(t,x)|\le C   \max_{[0,i_\dagger)}\big\Vert  u_0(i,x) \big\Vert_{H^1_x(\R)} e^{\eta t}~~~~~t>0,~~~x\in\R.
\end{equation}
 Indeed, consider $\xi\in\R\backslash[-\varep,\varep]$ and we  deduce from Lemma \ref{lem_analysis of lambda} (iii) that $\Re\lambda(\xi)<\bar\lambda<0$ for some $\bar{\lambda}<0$. Choose the integral line $\Gamma$ as 
$
	\Gamma=\big\{\eta+\mathbf{i}z,~z\in\R\big\},
$
with  $\max(\bar\lambda,-\alpha_* c_*)<\eta<0$,
so that all the singular points $\lambda(\xi)$ of $\widetilde{\mathcal{H}}(\cdot,\xi)$ for $|\xi|>\varep$ lie to the left of the curve $\Gamma$ in the complex plane $\mathbb{C}$. We then get that for $t>0$ and $x\in\R$,
\begin{align*}
\big|\I_2(t,x)\big|&=\bigg|	\frac{1}{2\pi}\int_{|\xi|>\varep} e^{\mathbf{i}x\xi}\bigg(\frac{1}{2\pi\mathbf{i}}\int_{\Gamma}e^{\lambda t}\widetilde{\mathcal{H}}(\lambda,\xi)  \widetilde {\mathcal{F}}(\lambda,\xi)~\md \lambda\bigg)
\md\xi\bigg|\\
&=\bigg|	\frac{1}{2\pi}\int_{|\xi|>\varep} e^{\mathbf{i}x\xi}\bigg(\frac{1}{2\pi}\int_{\R}e^{(\eta+\mathbf{i}z) t}\widetilde{\mathcal{H}}(\eta+\mathbf{i}z,\xi)  \widetilde {\mathcal{F}}(\eta+\mathbf{i}z,\xi)\md z\bigg)
\md\xi\bigg|\\
&\le \frac{e^{\eta t}}{4\pi^2}\int_{|\xi|>\varep} \frac{1}{1+|\xi|}(1+|\xi|)	\left(\int_\R  \big|\widetilde F(\eta+\mathbf{i}z,\xi)\big| \big|\widetilde{\mathcal{H}}(\eta+\mathbf{i}z,\xi)\big|\md z\right)\md\xi\\
&\le \frac{e^{\eta t}}{4\pi^2}\bigg(\int_{|\xi|>\varep} \frac{1}{(1+|\xi|)^2}\md \xi\bigg)^\frac{1}{2} \bigg(\int_{|\xi|>\varep} 
\bigg((1+|\xi|)\underbrace{\int_\R  \big|\widetilde F(\eta+\mathbf{i}z,\xi)\big| \big|\widetilde{\mathcal{H}}(\eta+\mathbf{i}z,\xi)\big|\md z}_{=:\mathcal{B}}\bigg)^2
\md \xi\bigg)^\frac{1}{2},
\end{align*}
where we have applied the H\"older inequality in the last step.  We notice that 
\begin{equation*}
	\bigg(\int_{|\xi|>\varep} \frac{1}{(1+|\xi|)^2}\md \xi\bigg)^\frac{1}{2}<+\infty.
\end{equation*}
It remains to estimate the integral $\mathcal{B}$. First, the H\"older inequality gives that 
\begin{equation}
	\label{estimate B}
	\mathcal{B}\le \bigg(
	\int_\R  \big|\widetilde F(\eta+\mathbf{i}z,\xi)\big|^2 \md z
	\bigg)^{\frac{1}{2}} \bigg(
	\int_\R  \big|\widetilde{\mathcal{H}}(\eta+\mathbf{i}z,\xi)\big|^2\md z
	\bigg)^{\frac{1}{2}}.
\end{equation}
Let $f_{\eta}(t)$ and $g_{\eta}(t)$ be as defined in \eqref{func def_f g}, by taking particularly   $\boldsymbol{\sigma}=\eta$, then their Fourier transforms $\widehat{f}_{\eta}(z)$ and $\widehat{g}_{\eta}(z)$ belong to $L^2_z(\R)$.
We have
\begin{align*}
		|\widetilde{\mathcal{F}}(\eta+\mathbf{i} z,\xi)|&=\bigg|\S_0\widehat{\K_*}(\xi)\int_0^\infty e^{-(\eta+\mathbf{i} z) t}\int_0^\infty \omega(i+t)e^{-\alpha_* c_*(i+t)}\widehat{u}_0(i,\xi)\md i\md t\bigg|\\
		&\le  \S_0 \widehat{\K_*}(0) \max_{[0,i_\dagger]}\omega \int_0^\infty \big|\widehat{u}_0(i,\xi)\big|\md i ~\bigg|\int_0^\infty e^{-(\eta+\mathbf{i} z+\alpha_* c_*) t}\md t\bigg|\\
		&<C\max_{[0,i_\dagger)}\widehat{u}_0(\cdot,\xi)\big|\widehat{f}_{\eta}(z)\big|,~~~~~\text{uniformly in}~\xi\in\R.
\end{align*}
Therefore, for each $\xi\in\R\backslash[-\varep,\varep]$,
\begin{equation}\label{4.34}
	\begin{aligned}
		\bigg(
		\int_\R  \big|\widetilde F(\eta+\mathbf{i}z,\xi)\big|^2 \md z
		\bigg)^{\frac{1}{2}}\le C\max_{[0,i_\dagger)}\widehat{u}_0(\cdot,\xi)\bigg(\int_\R \big|\widehat{f}_{\eta}(z)\big|^2\md z\bigg)^{\frac{1}{2}}=C\max_{[0,i_\dagger)}\widehat{u}_0(\cdot,\xi)\big\Vert  \widehat{f}_{\eta}(z)\big\Vert_{L^2_z(\R)}.
	\end{aligned}
\end{equation}
On the other hand, it follows from $\max(\bar\lambda,-\alpha_* c_*)<\eta<0$ that there is 
no root of \eqref{A4} on the line $\Re\lambda=\eta$ for all $\xi\in\R\backslash[-\varep,\varep]$.
We then infer from  \eqref{4.25} together with $|\widehat{\K_*}(\xi)|\le \widehat{\K_*}(0)$ for all $\xi\in\R$ that 
$$m_{\eta}:=\inf_{y\in\R,\xi\in\R\backslash[-\varep,\varep]}\big|1-\S_0\widehat{\K_*}(\xi)
\widetilde{\omega}(\eta+\mathbf{i}z+\alpha_*c_*)\big|>0.$$
This implies that
\begin{equation*}
	|\widetilde{\mathcal{H}}(\eta+\mathbf{i}z,\xi)|=\bigg|\frac{\S_0\widehat{\K_*}(\xi)
		\widetilde{\omega}(\eta+\mathbf{i}z+\alpha_*c_*)}{1-\S_0\widehat{\K_*}(\xi)
		\widetilde{\omega}(\eta+\mathbf{i}z+\alpha_*c_*)}
	\bigg|\le \frac{\S_0\widehat{\K_*}(0)}{m_{\eta}} \big| \widehat{g}_{\eta}(z)\big|,~~~~~\text{uniformly in}~\xi\in\R\backslash[-\varep,\varep],
\end{equation*}
which gives
\begin{equation}\label{4.35}
\begin{aligned}
	\bigg(
	\int_\R  \big|\widetilde{\mathcal{H}}(\eta+\mathbf{i}z,\xi)\big|^2\md z
	\bigg)^{\frac{1}{2}}\le \frac{\S_0\widehat{\K_*}(0)}{m_{\eta}}\bigg(\int_\R \big|\widehat{g}_{\eta}(z)\big|^2\md z\bigg)^{\frac{1}{2}}=\frac{\S_0\widehat{\K_*}(0)}{m_{\eta}}\big\Vert  \widehat{g}_{\eta}(z)\big\Vert_{L^2_z(\R)}.
\end{aligned}
\end{equation}
Plugging \eqref{4.34} and \eqref{4.35} into the estimate \eqref{estimate B} yields
\begin{equation*}
	\mathcal{B}\le C\max_{[0,i_\dagger)}\widehat{u}_0(\cdot,\xi)\big\Vert  \widehat{f}_{\eta}(z)\big\Vert_{L^2_z(\R)}  \big\Vert  \widehat{g}_{\eta}(z)\big\Vert_{L^2_z(\R)}\le C\max_{[0,i_\dagger)}\widehat{u}_0(\cdot,\xi),
\end{equation*}
whence
\begin{equation*}
	\bigg(\int_{|\xi|>\varep} (1+|\xi|)^2 \mathcal{B}^2\md\xi\bigg)^{\frac{1}{2}}\le C  \bigg(\int_{|\xi|>\varep} \Big((1+|\xi|)\big|\max_{[0,i_\dagger)}\widehat{u}_0(\cdot,\xi)\big|\Big)^2\md\xi\bigg)^{\frac{1}{2}}\le C\max_{[0,i_\dagger)}\big\Vert u_0(i,x)\big\Vert_{H^1_x(\R)}.
\end{equation*}
Our claim \eqref{I_2-1} is achieved.
\vskip 2mm

\noindent
\textbf{Step 2}. Let us now estimate the integral $\I_1$. We note from \eqref{4.30} and \eqref{W_hat} that
\begin{align*}
	\I_1(t,x)&=\frac{1}{2\pi}\int_{|\xi|\le\varep} e^{\mathbf{i}x\xi}\frac{1}{2\pi\mathbf{i}}\int_{\Gamma}e^{\lambda t}\widetilde{\mathcal{H}}(\lambda,\xi)  \widetilde {\mathcal{F}}(\lambda,\xi)\md \lambda\md\xi=\frac{1}{2\pi}\int_{|\xi|\le\varep} e^{\mathbf{i}x\xi}\widehat{W}(t,\xi)\md \xi\\
	&=\underbrace{\frac{1}{2\pi}\int_{|\xi|\le\varep} r(\xi) e^{\mathbf{i}x\xi} e^{\lambda(\xi) t}\widetilde{\mathcal{F}}(\lambda(\xi),\xi)\md \xi}_{=:\mathcal{I}_{1,1}(t,x)}+\underbrace{\frac{1}{2\pi}\int_{|\xi|\le\varep} e^{\mathbf{i}x\xi} \Lambda(t,\xi)\md \xi}_{\mathcal{I}_{1,2}(t,x)},~~~~~~~~t>0,~~x\in\R.
\end{align*}
Here, it is easy to infer from \eqref{4.32} that  
\begin{align}\label{I_12}
	\big|\mathcal{I}_{1,2}(t,x)\big|=\bigg|\frac{1}{2\pi}\int_{|\xi|\le\varep} e^{\mathbf{i}x\xi} \Lambda(t,\xi)\md \xi\bigg|\le C|\Lambda(t,\xi)|\le C e^{\boldsymbol{\delta}t},~~~~~~t>0,~\text{uniformly in}~x\in\R, 
\end{align}
for $\boldsymbol{\delta}<0$ given in Proposition \ref{prop_4.1}. Moreover, we observe from  \eqref{F_fourier} that $\widehat{ \mathcal{F}}(t,\xi)\equiv0$ for $t>i_\dagger$, uniformly in $\xi\in\R$, therefore
\begin{align*}
	\mathcal{I}_{1,1}(t,x)&=\frac{1}{2\pi}\int_{|\xi|\le\varep}  r(\xi) e^{\mathbf{i}x\xi} e^{\lambda(\xi) t}\widetilde{\mathcal{F}}(\lambda(\xi),\xi)\md \xi\\
	&=\frac{1}{2\pi}\int_{|\xi|\le\varep}  r(\xi) e^{\mathbf{i}x\xi}  e^{\lambda(\xi) t}\bigg(\int_0^\infty e^{-\lambda(\xi)s}\widehat{ \mathcal{F}}(s,\xi)\md s\bigg) \md \xi\\
	&=\frac{1}{2\pi}\int_{|\xi|\le\varep}  r(\xi)  \int_0^{i_\dagger} e^{\lambda(\xi)(t-s)}\int_\R e^{\mathbf{i}(x-y)\xi} \mathcal{F}(s,y)\md y\md s \md \xi,~~~~~~t>0,~x\in\R.
\end{align*}
For $t>i_\dagger$, we further derive from $\mathcal{F}(t,x)=0$ for $t\ge i_\dagger$, uniformly in $x\in\R$,  and  Fubini theorem that 
\begin{align*}
	\mathcal{I}_{1,1}(t,x)&=\frac{1}{2\pi}\int_{|\xi|\le\varep}  r(\xi)  \int_0^t e^{\lambda(\xi)(t-s)}\int_\R e^{\mathbf{i}(x-y)\xi} \mathcal{F}(s,y)\md y\md s \md \xi\\	&=\frac{1}{2\pi}\int_0^t \int_\R \int_{|\xi|\le\varep}  r(\xi)  e^{\lambda(\xi)(t-s)}e^{\mathbf{i}(x-y)\xi}    \mathcal{F}(s,y) \md \xi\md y\md s\\
	&= \bigg(\underbrace{ \frac{1}{2\pi}\int_{|\xi|\le\varep} r(\xi) e^{\lambda(\xi)t} e^{\mathbf{i}x\xi}   \md \xi}_{=:\mathcal{G}_0(t,x)}\bigg) *_x *_t \mathcal{F}(t,x)
	,~~~~~~t>i_\dagger,~~x\in\R.
\end{align*}

For some small parameter $\gamma_*\in(0,1/2)$ to be determined later, we choose $T>i_\dagger$ sufficiently large such that $T^{-\gamma_*}<\varep$. For all $t\ge T$, we  divide the domain of integration into two situations: either $|\xi|\le t^{-\gamma_*}$ or $t^{-\gamma_*}\le |\xi|\le\varep$, then
\begin{align*}
	\mathcal{G}_0(t,x)&=\underbrace{\frac{1}{2\pi}\int_{|\xi|\le t^{-\gamma_*}}   r(\xi) e^{\mathbf{i}x\xi} e^{\lambda(\xi)t} \md \xi}_{=:\mathcal{G}(t,x)}+ \underbrace{\frac{1}{2\pi}\int_{t^{-\gamma_*}\le |\xi|\le\varep}  e^{\mathbf{i}x\xi} r(\xi) e^{\lambda(\xi)t} \md \xi}_{=:\mathcal{G}_1(t,x)},~~~~~~t\ge T,~x\in\R.
\end{align*} 

Let us now estimate $\mathcal{G}_1$. Set $\lambda^*(\xi):=-\mathbf{i}c_*\xi- b\xi^2$. By noticing that $|r(\xi)|$ is bounded for $|\xi|\le \varep$, we then infer from \eqref{rough lambda} and from the change of variable $\zeta=\xi\sqrt{t}$ that, up to increasing $T$,
\begin{equation*}
	\begin{aligned} 
		|\mathcal{G}_1(t,x)|&=\frac{1}{2\pi}\left|\int_{t^{-\gamma_*}\le |\xi|\le\varep} r(\xi) e^{\mathbf{i}x\xi} e^{\lambda(\xi)t} \md \xi \right|\lesssim\int_{t^{-\gamma_*}\le |\xi|\le\varep}  \left| e^{(\lambda(\xi)-\lambda^*(\xi))t+\lambda^*(\xi)t} \right| \md \xi \\
		&\lesssim	\int_{t^{-\gamma_*}\le |\xi|\le\varep}   e^{-\frac{b}{2}\xi^2t} \md \xi= t^{-\frac{1}{2}} \int_{t^{\frac{1}{2}-\gamma_*}\le |\zeta|\le \varep t^{\frac{1}{2}}} e^{-\frac{b}{2}\zeta^2}\md\zeta \lesssim e^{-\frac{b}{2}t^{1-2\gamma_*}},~~~~~t\ge T,~~\text{uniformly in}~x\in\R.
	\end{aligned}
\end{equation*}
Since   $\K_*\in L^1(\R)$ and  $u_0$ is continuous and compactly supported in $[0,i_\dagger)\times\R$, since $\omega$ is nonnegative, bounded and belongs to $ L^1([0,i_\dagger])$, it follows from the Young's convolution inequality that, up to increasing $T$, 
\begin{equation}\label{G1}
	\begin{aligned}
	\mathcal{G}_1**\mathcal{F}(t,x)	=&\int_\R\int_0^t \mathcal{G}_1(t-s,x-y)\mathcal{F}(s,y)\md s\md y \\
		\lesssim &\int_\R\int_0^{i_\dagger} e^{-\frac{b}{2}(t-s)^{1-2\gamma_*}} \bigg(\S_0\int_0^\infty \omega(i+s) e^{-\alpha_*c_* (i+s)}\K_* *  u_0(i,y)\md i\bigg) \md s\md y\\
		\lesssim&\int_0^{i_\dagger} e^{-\frac{b}{2}(t-s)^{1-2\gamma_*}} \bigg(\S_0\int_0^\infty \omega(i+s) e^{-\alpha_*c_* (i+s)}\md i\bigg) \md s \int_\R\K_* *  \max_{[0,i_\dagger)}u_0(i,y)\md y\\
		\lesssim & \Vert \max_{[0,i_\dagger)}u_0(i,x)\Vert_{L^{1}_x(\R)}\int_0^{i_\dagger} e^{-\frac{b}{2}(t-s)^{1-2\gamma_*}}e^{-\alpha_*c_* s}\md s\\
		\lesssim& \Vert u_0\Vert_{L^{\infty}([0,i_\dagger)\times\R)}e^{-\frac{b}{2}t^{1-2\gamma_*}},~~~~~~~t\ge T,~\text{uniformly in}~x\in\R. 
	\end{aligned}
\end{equation}

It is left to estimate $\mathcal{G}$ for all $t\ge T$ and $x\in\R$, up to increasing $T$ if needed. In the range of $|\xi|\le t^{-\gamma_*}$,  we derive from \eqref{precise lambda} and \eqref{r(xi)} that
\begin{equation*}
	\lambda(\xi)+\mathbf{i}c_*\xi=- b\xi^2+\mathbf{i}k\xi^3 +O(\xi^4),~~~~~\text{with}~b>0,~k\in\R,
\end{equation*}
and 
\begin{equation*}	r(\xi)=\upsilon_0+\mathbf{i}\upsilon_1\xi+\upsilon_2\xi^2+\mathbf{i}\upsilon_3\xi^3+O(\xi^4),~~~~~\text{with}~\upsilon_0>0,~\upsilon_i\in\R\backslash\{0\}.
\end{equation*}
By the change of variable $\zeta=\xi\sqrt{t}$, we have
\begin{equation*}\label{G-kernel}
	\begin{aligned}
		\mathcal{G}(t,x)&=\frac{1}{2\pi}\int_{|\xi|\le t^{-\gamma_*}} r(\xi) e^{\mathbf{i}x\xi} e^{\lambda(\xi)t} \md \xi=\frac{1}{2\pi}\int_{|\xi|\le t^{-\gamma_*}} r(\xi) e^{\mathbf{i}(x-c_*t)\xi} e^{(\lambda(\xi)+\mathbf{i}c_*\xi )t} \md \xi\\
		&=\frac{1}{2\pi\sqrt{t}}\int_{-t^{\frac{1}{2}-\gamma_*}}^{t^{\frac{1}{2}-\gamma_*}}r\Big(\frac{\zeta}{\sqrt{t}}\Big)\exp\bigg(\underbrace{\mathbf{i}\frac{\zeta(x-c_*t)}{\sqrt{t}}-b\zeta^2+\mathbf{i}\frac{k\zeta^3}{\sqrt{t}}+O\Big(\frac{\zeta^4}{t}\Big)}_{=:\Psi(t,x-c_*t,\zeta)}\bigg)
		\md\zeta.
	\end{aligned}
\end{equation*}
In order to proceed the computation, let us define the complex line
\begin{equation}\label{Gamma*}
	\Gamma_*:=\left\{\zeta=\eta+\mathbf{i}\frac{x-c_*t}{2b\sqrt{t}},~|\eta|\le t^{\frac{1}{2}-\gamma_*}\right\},
\end{equation}
along which $\Psi$ can be reformulated as
\begin{align}\label{Psi_psi}
	\Psi(t,x-c_*t,\eta)= -b\eta^2-\frac{(x-c_*t)^2}{4bt}+\underbrace{\mathbf{i}\frac{k\big( \eta+\mathbf{i}\frac{x-c_*t}{2b\sqrt{t}}\big)^3}{\sqrt{t}}+O\bigg(\frac{\big(\eta+\mathbf{i}\frac{x-c_*t}{2b\sqrt{t}}\big)^4}{t}\bigg)}_{=:\psi(t,x-c_*t,\eta)}. 
\end{align}
We observe that $\Psi(t,x-c_*t,\eta)$ can be dominated by $-b\eta^2-\frac{(x-c_*t)^2}{4bt}$, provided that $\psi(t,x-c_*t,\eta)$ is negligible, that is,
\begin{equation}\label{cdn}
	\frac{k\big( \eta+\mathbf{i}\frac{x-c_*t}{2b\sqrt{t}}\big)^3}{\sqrt{t}}\ll 1,~~~~\frac{\big(\eta+\mathbf{i}\frac{x-c_*t}{2b\sqrt{t}}\big)^4}{t}\ll1,~~~~~\text{for}~|\eta|\le t^{\frac{1}{2}-\gamma_*}.
\end{equation}
In other words, it is necessary to set some constraints on the size of $x-c_*t$.
To achieve \eqref{cdn}, we require
\begin{equation*}
\frac{\eta}{ t^{\frac{1}{6}}}\ll 1,~~~~\frac{|x-c_*t|}{t^{\frac{2}{3}}}\ll1, ~~~~~\text{for}~|\eta|\le t^{\frac{1}{2}-\gamma_*},
\end{equation*}
which can be fulfilled by imposing
\begin{equation}
	\label{cdn_gamma delta}
	\frac{3}{8}<\gamma_*<\frac{1}{2},~~~~~~\frac{|x-c_*t|}{\sqrt{t}}\le t^{\varsigma}~~~\text{with}~~\varsigma\in\left(0,\frac{1}{2}-\gamma_*\right).
\end{equation}
In what follows, our analysis will be divided into two cases: $|x-c_*t|\le t^{\frac{1}{2}+\varsigma}$ and $|x-c_*t|\ge t^{\frac{1}{2}+\varsigma}$. 

\vspace{2mm}
\noindent
{\bf Case 1. $|x-c_*t|\le t^{\frac{1}{2}+\varsigma}$.} Define 
\begin{equation*}
	\Gamma_\pm:=\left\{\zeta=\pm t^{\frac{1}{2}-\gamma_*}+\mathbf{i}\beta,~0\le \beta \le \frac{x-c_*t}{2b\sqrt t}\right\}.
\end{equation*} 
Therefore, together with the line $\Gamma_*$ given in \eqref{Gamma*}, we arrive at 
$$	\mathcal{G}(t,x)= \frac{1}{2\pi\sqrt{t}}\int_{-t^{\frac{1}{2}-\gamma_*}}^{t^{\frac{1}{2}-\gamma_*}}r(\zeta/\sqrt{t})\exp\big(\Psi(t,x-c_*t,\zeta)\big)\md \zeta= \frac{1}{2\pi\sqrt{t}}\int_{\Gamma_-\cup\Gamma_*\cup\Gamma_+}r(\zeta/\sqrt{t})\exp\big(\Psi(t,x-c_*t,\zeta)\big)\md \zeta.
$$
On $\Gamma_\pm$, since 
\begin{equation*}
	\Psi(t,x-c_*t,\zeta)=\pm\mathbf{i}\frac{x-c_*t}{t^\gamma}-\frac{\beta(x-c_*t)}{\sqrt{t}}-b(\pm t^{\frac{1}{2}-\gamma_*}+\mathbf{i}\beta)^2+O\left(t^{1-3\gamma_*}\right),
\end{equation*}
it follows from our choices of $\gamma_*$ and $\varsigma$ in \eqref{cdn_gamma delta} that
\begin{equation*}
	\Re 	\Psi(t,x-c_*t,\zeta)=-\frac{\beta(x-c_*t)}{\sqrt{t}}-b\left(t^{1-2\gamma_*}-\beta^2\right)+O\left(t^{1-3\gamma_*}\right)\le-b t^{1-2\gamma_*}\big(1+o(1)\big),
\end{equation*}
Moreover,  from $\displaystyle
		r\Big(\frac{\zeta}{\sqrt{t}}\Big)=\upsilon_0+\mathbf{i}\upsilon_1\frac{\zeta}{\sqrt{t}}+O\left(\frac{\zeta^2}{t}\right),\ |\zeta|=|\xi\sqrt{t}|\le t^{\frac{1}{2}-\gamma_*},\ t\ge T,$
up to increasing $T$,  we deduce that $|r(\zeta/\sqrt{t})|=|\upsilon_0+O(t^{-\gamma_*})|$ is uniformly bounded for $\zeta\in\Gamma_\pm$. As a consequence,
\begin{equation}\label{case1_1}
	\frac{1}{2\pi\sqrt{t}}\left|\int_{\Gamma_-\cup\Gamma_+}r(\zeta/\sqrt{t})\exp\big(\Psi(t,x-c_*t,\zeta)\big)\md \zeta\right|\lesssim t^{-\gamma_*} e^{-b t^{1-2\gamma_*}}.
\end{equation}

It remains to look at $\Gamma_*=\{\zeta=\eta+\mathbf{i}\frac{x-c_*t}{2b\sqrt{t}},~|\eta|\le t^{\frac{1}{2}-\gamma_*}\}$. We notice that on $\Gamma_*$, 
\begin{align*}
	r\Big(\frac{\zeta}{\sqrt{t}}\Big)=\upsilon_0+\mathbf{i}\upsilon_1\frac{\eta+\mathbf{i}\frac{x-c_*t}{2b\sqrt{t}}}{\sqrt{t}}+\upsilon_2\left(\frac{\eta+\mathbf{i}\frac{x-c_*t}{2b\sqrt{t}}}{\sqrt{t}}\right)^2+\mathbf{i}\upsilon_3\left(\frac{\eta+\mathbf{i}\frac{x-c_*t}{2b\sqrt{t}}}{\sqrt{t}}\right)^3+O\bigg(\frac{\big(\eta+\mathbf{i}\frac{x-c_*t}{2b\sqrt{t}}\big)^4}{t^2}\bigg),
\end{align*}
which, by denoting $$\mathcal{A}(t,x):=\frac{x-c_*t}{2b\sqrt{t}}\in[-t^\varsigma, t^\varsigma],$$
can be recast as
 \begin{align*}
 	r\Big(\frac{\zeta}{\sqrt{t}}\Big)\!&=\!\upsilon_0\!-\!\frac{\upsilon_1}{\sqrt{t}} \mathcal{A}\!-\!\frac{\upsilon_2}{t}\mathcal{A}^2+\frac{\upsilon_3}{t^{\frac{3}{2}}}\mathcal{A}^3
 	\!+\!\mathbf{i}\eta\Big(\frac{\upsilon_1}{\sqrt{t}}+\frac{2\upsilon_2}{t}\mathcal{A}\!-\!\frac{3\upsilon_3}{t^{\frac{3}{2}}}\mathcal{A}^2\Big)+\eta^2\Big(\frac{\upsilon_2}{t}\!-\!\frac{3\upsilon_3}{t^{\frac{3}{2}}}\mathcal{A}
 	\Big)+\mathbf{i}\eta^3\frac{\upsilon_3}{t^{\frac{3}{2}}}+O(t^{-4\gamma_*})\\
 	&=\!\upsilon_0\!+\!\mathbf{i}\eta\frac{\upsilon_1}{\sqrt{t}}+O\big(t^{\varsigma-\frac{1}{2}}\big).
 \end{align*}
Moreover, the function $\psi$ given in \eqref{Psi_psi} has the following expression on $\Gamma_*$:
\begin{align*}
	\psi(t,x-c_*t,\eta)&=\mathbf{i}\frac{k\big( \eta+\mathbf{i}\frac{x-c_*t}{2b\sqrt{t}}\big)^3}{\sqrt{t}}+O\bigg(\frac{\big(\eta+\mathbf{i}\frac{x-c_*t}{2b\sqrt{t}}\big)^4}{t}\bigg)\\
	&=\frac{k}{\sqrt{t}}\mathcal{A}^3-\mathbf{i}\eta\frac{3k}{\sqrt{t}}\mathcal{A}^2-\eta^2\frac{3k}{\sqrt{t}}\mathcal{A}+\mathbf{i}\eta^3\frac{k}{\sqrt{t}}+O(t^{1-4\gamma_*})\lesssim O(t^{1-3\gamma_*})=o(1)
\end{align*}
uniformly in $|x-c_*t|\le t^{\frac{1}{2}+\varsigma}$ and $|\eta|\le t^{\frac{1}{2}-\gamma_*}$, due to $\gamma_*\in(3/8,1/2)$ and $0<\varsigma<1/2-\gamma_*$.

Therefore, we have
\begin{align*}
		\frac{1}{2\pi\sqrt{t}}\int_{\Gamma_*}&	r\Big(\frac{\zeta}{\sqrt{t}}\Big)\exp\big(\Psi(t,x-c_*t,\zeta)\big)\md \zeta\\
		=&\frac{1}{2\pi\sqrt{t}}\int_{-t^{\frac{1}{2}-\gamma_*}}^{t^{\frac{1}{2}-\gamma_*}}r\left(\frac{\eta+\mathbf{i}\frac{x-c_*t}{2b\sqrt{t}}}{\sqrt{t}}\right)
		\exp\big(-b\eta^2-\frac{(x-c_*t)^2}{4bt}+\psi(t,x-c_*t,\eta)\big)\md \eta\\
		=&\frac{1}{2\pi\sqrt{t}}\int_{-t^{\frac{1}{2}-\gamma_*}}^{t^{\frac{1}{2}-\gamma_*}}\bigg(\!\upsilon_0\!+\!\mathbf{i}\eta\frac{\upsilon_1}{\sqrt{t}}+O\big(t^{\varsigma-\frac{1}{2}}\big)\!\bigg)\bigg(1+
	\frac{k}{\sqrt{t}}\mathcal{A}^3-\mathbf{i}\eta\frac{3k}{\sqrt{t}}\mathcal{A}^2-\eta^2\frac{3k}{\sqrt{t}}\mathcal{A}\\
	&~~~~~~~~~~~~~~~~~~~~~~~~~~~~~~~~~~~~~  ~~~~~~~~~~~~~~~~~~~~+\mathbf{i}\eta^3\frac{k}{\sqrt{t}}+O(t^{1-4\gamma_*})\bigg)e^{-b\eta^2-\frac{(x-c_*t)^2}{4bt}}\md \eta\\
		=&\frac{1}{2\pi\sqrt{t}}  \int_{-t^{\frac{1}{2}-\gamma_*}}^{t^{\frac{1}{2}-\gamma_*}}\bigg( \!\upsilon_0+\frac{k\upsilon_0}{\sqrt{t}}\mathcal{A}^3 -\frac{3k\upsilon_0}{\sqrt{t}}\mathcal{A}\eta^2 +O(t^{\varsigma-\frac{1}{2}})
		\bigg) e^{-b\eta^2}\md \eta~ e^{-\frac{(x-c_*t)^2}{4bt}	} 
		\\
		=&\frac{1}{2\pi\sqrt{t}}\bigg( \!\upsilon_0+\frac{k\upsilon_0}{\sqrt{t}}\mathcal{A}^3 -\frac{3k\upsilon_0}{2b\sqrt{t}}\mathcal{A} +O(t^{\varsigma-\frac{1}{2}})
		\bigg)\sqrt{\frac{	\pi}{ b }} e^{-\frac{(x-c_*t)^2}{4bt}	}\\
	=&\frac{1}{\sqrt{2bt}}\bigg( \!\upsilon_0\!-\frac{3k\upsilon_0}{2b\sqrt{t}}\mathcal{A}+\frac{k\upsilon_0}{\sqrt{t}}\mathcal{A}^3 
	+O(t^{\varsigma-\frac{1}{2}})\bigg) \frac{1}{\sqrt{2\pi}} e^{-\frac{(x-c_*t)^2}{4bt}	}\\
	=&\left[\frac{\upsilon_0}{\sqrt{2bt}}+\frac{1}{{2bt} }\left(-\frac{3k\upsilon_0}{2b}\left(\frac{x-c_*t}{\sqrt{2bt}}\right)	+\frac{k\upsilon_0}{2b}\left(\frac{x-c_*t}{\sqrt{2bt}}\right)	^3\right)\right]\frac{1}{\sqrt{2\pi}}e^{-\frac{(x-c_*t)^2}{4bt}	}+O(t^{\varsigma-1}) e^{-\frac{(x-c_*t)^2}{4bt}	}\\
	=&:\bigg[\frac{\upsilon_0}{\sqrt{2bt}}+\frac{k\upsilon_0}{4b^2t}\mathcal{P}\left(\frac{x-c_*t}{\sqrt{2bt}}\right)	\bigg]\mathscr{G}\left(\frac{x-c_*t}{\sqrt{2bt}}\right)	+O(t^{\varsigma-1}) e^{-\frac{(x-c_*t)^2}{4bt}	},
\end{align*}
 where we have defined 
 $\mathcal{P}(x):=-3x	+x^3$ for $x\in\R$.  
Consequently, combining the computation above with \eqref{case1_1}, we derive that
\begin{align*}
	\mathcal{G}(t,x)&= \frac{1}{2\pi\sqrt{t}}\int_{\Gamma_-\cup\Gamma_*\cup\Gamma_+}r(\zeta/\sqrt{t})\exp\big(\Psi(t,x-c_*t,\zeta)\big)\md \zeta\\
	&=\bigg[\frac{\upsilon_0}{\sqrt{2bt}}+\frac{k\upsilon_0}{4b^2t}\mathcal{P}\left(\frac{x-c_*t}{\sqrt{2bt}}\right)	\bigg]\mathscr{G}\left(\frac{x-c_*t}{\sqrt{2bt}}\right)	+\mathcal{R}(t,x).
\end{align*}	
This gives \eqref{estimate_1_Gaussian} and \eqref{remaining term R}.

\vspace{2mm}
\noindent
{\bf Case 2. $|x-c_*t|\ge t^{\frac{1}{2}+\varsigma}$.} In this case, the main ingredient of the proof is very similar to the previous case. However, as the term $\psi(t,x-c_*t,\eta)$ in \eqref{Psi_psi} cannot be negligible, the line $\Gamma_*$ defined in \eqref{Gamma*} is not helpful anymore. Instead, we shall make other options of the integration lines.
First of all, let us deal with the situation of $x-c_*t\ge t^{\frac{1}{2}+\varsigma}$. Define the vertical segments:
$
	\Gamma_\pm:=\left\{\zeta=\pm t^{\frac{1}{2}-\gamma_*}+\mathbf{i}\beta,~0\le\beta\le 1\right\},
$
and the horizontal segment:
$
	\overline \Gamma:=\left\{\zeta=\eta+\mathbf{i},~|\eta|\le t^{\frac{1}{2}-\gamma_*}
	\right\}.
$
On $\Gamma_\pm$, we have 
\begin{equation*}
	\Psi(t,x-c_*t,\zeta)=\pm\mathbf{i}\frac{x-c_*t}{t^{\gamma_*}}-\frac{\beta(x-c_*t)}{\sqrt{t}}-b(\pm t^{\frac{1}{2}-\gamma_*}+\mathbf{i}\beta)^2+O(t^{1-3\gamma_*}),
\end{equation*}
whence 
\begin{equation}\label{eqn-1}
\Re\Psi(t,x-c_*t,\zeta)\le -b t^{1-2\gamma_*}\left(1+o(1)\right).	
\end{equation}
On $\overline \Gamma$, we notice
\begin{equation*}
		\Psi(t,x-c_*t,\zeta)=-\frac{x-c_*t}{\sqrt{t}}+\mathbf{i}\frac{\eta(x-c_*t)}{\sqrt{t}}-b(\eta+\mathbf{i})^2+\mathbf{i}\frac{k(\eta+\mathbf{i})^3}{\sqrt{t}}+O\Big(\frac{(\eta+\mathbf{i})^4}{t}\Big),
\end{equation*}
therefore
\begin{equation}\label{eqn-2}
	\Re	\Psi(t,x-c_*t,\zeta)\le -\frac{x-c_*t}{\sqrt{t}}+O(1)\le -t^{\varsigma}+O(1).
\end{equation}
We also observe that
\begin{equation*}
	|r(\zeta/\sqrt{t})|\le \upsilon_0+O(t^{-\gamma_*})~~~~\text{for}~\zeta\in\Gamma_-\cup\overline\Gamma\cup\Gamma_+.
\end{equation*}
As a consequence, it follows from \eqref{eqn-1} and \eqref{eqn-2} that
\begin{equation}
	\label{eqn-12}
	\begin{aligned}
		\left|\mathcal{G}(t,x)\right|&= \frac{1}{2\pi\sqrt{t}}\bigg|\int_{-t^{\frac{1}{2}-\gamma_*}}^{t^{\frac{1}{2}-\gamma_*}}r(\zeta/\sqrt{t})\exp\big(\Psi(t,x-c_*t,\zeta)\big)\md \zeta\bigg|\\
		&= \frac{1}{2\pi\sqrt{t}}\left|\int_{\Gamma_-\cup\overline\Gamma\cup\Gamma_+}r(\zeta/\sqrt{t})\exp\big(\Psi(t,x-c_*t,\zeta)\big)\md \zeta\right|
		\lesssim t^{-\gamma_*} e^{-t^{\varsigma}}.
	\end{aligned}
\end{equation}

It remains to discuss the situation of $x-c_*t\le -t^{\frac{1}{2}+\varsigma}$. The analysis in the previous case will work by a slight modification. We outline the details for the sake of completeness. Define now the vertical segments:
\begin{equation*}
	\Gamma'_\pm:=\left\{\zeta=\pm t^{\frac{1}{2}-\gamma_*}-\mathbf{i}\beta,~0\le\beta\le 1\right\},
\end{equation*}
and the horizontal segment:
\begin{equation*}
	\underline \Gamma:=\left\{\zeta=\eta-\mathbf{i},~|\eta|\le t^{\frac{1}{2}-\gamma_*}
	\right\}.
\end{equation*}
On $\Gamma_\pm'$, we have 
\begin{equation*}
	\Psi(t,x-c_*t,\zeta)=\pm\mathbf{i}\frac{x-c_*t}{t^{\gamma_*}}+\frac{\beta(x-c_*t)}{\sqrt{t}}-b(\pm t^{\frac{1}{2}-\gamma_*}-\mathbf{i}\beta)^2+O(t^{1-3\gamma_*}),
\end{equation*}
whence 
\begin{equation}\label{eqn-3}
	\Re\Psi(t,x-c_*t,\zeta)\le -b t^{1-2\gamma_*}\left(1+o(1)\right).	
\end{equation}
On $\underline \Gamma$, since
\begin{equation*}
	\Psi(t,x-c_*t,\zeta)=\frac{x-c_*t}{\sqrt{t}}+\mathbf{i}\frac{\eta(x-c_*t)}{\sqrt{t}}-b(\eta-\mathbf{i})^2+\mathbf{i}\frac{k(\eta-\mathbf{i})^3}{\sqrt{t}}+O\Big(\frac{(\eta-\mathbf{i})^4}{t}\Big),
\end{equation*}
it follows that
\begin{equation}\label{eqn-4}
	\Re	\Psi(t,x-c_*t,\zeta)\le \frac{x-c_*t}{\sqrt{t}}+O(1)\le -t^{\varsigma}+O(1).
\end{equation}
Also, there holds $
	|r(\zeta/\sqrt{t})|\le \upsilon_0+O(t^{-\gamma_*})$ for $\zeta\in\Gamma'_-\cup\underline\Gamma\cup \Gamma'_+$.
Eventually, \eqref{eqn-3} and \eqref{eqn-4} imply that
\begin{equation}
	\label{eqn-34}
	\begin{aligned}
		\left|\mathcal{G}(t,x)\right|&=  \frac{1}{2\pi\sqrt{t}}\left|\int_{\Gamma'_-\cup\underline\Gamma\cup\Gamma'_+}r(\zeta/\sqrt{t})\exp\big(\Psi(t,x-c_*t,\zeta)\big)\md \zeta\right|
		\lesssim t^{-\gamma_*} e^{-t^{\varsigma}}.
	\end{aligned}
\end{equation}
Therefore, \eqref{estimate_2} is achieved, thanks to \eqref{eqn-12} and \eqref{eqn-34}. Consequently, it follows from \eqref{I_12} and \eqref{G1} that, up to increasing $T$,
\begin{equation}\label{I_1}
	\big|\mathcal{I}_{1}(t,x)\!-\!\mathcal{G}**\mathcal{F}(t,x)\big|\!=\!\big|\mathcal{G}_1**\mathcal{F}(t,x)+\mathcal{I}_{1,2}(t,x)\big|\!\le\! C \Vert u_0\Vert_{L^{\infty}([0,i_\dagger)\times\R)}e^{-\frac{b}{2}t^{1-2\gamma_*}}
\end{equation}
for $t\ge T$, uniformly in $x\in\R$.
\vskip 2mm

\noindent
\textbf{Conclusion}. Combining  \eqref{F1}, \eqref{I_2-1} and \eqref{I_1},  we  arrive at \eqref{4.2-1}. 
 The proof of Theorem \ref{thm_3.1} is  complete. 

\section{Approximation within and beyond the diffusive scale} 
\label{sec4}

The main step in the proof of Theorem \ref{thm-1'} is the study of the solution $\rho(t,i,x)$ in the diffusive zone ahead of the moving boundary $x\approx c_*t$, which entails the study of  the function $v(t,i,x)$ defined by (\ref{def_v}). As $\rho(t,i,x)$ is a bounded function, the function $v(t,i,x)$ is very close to 0 as soon as $x-c_*t$ is a little below 0, that is, $x-c_*t\sim -t^{\delta}$ with $\delta>0$ small. This observation is the key to all the PDE proofs of the logarithmic delay in Fisher-KPP type equations, and the present model is no exception. What it means is that, in order to investigate what $v(t,i,x)$ looks like ahead of the moving boundary, it is a good idea to try to approximate it with  the solution, denoted by $\mathbf\Phi$, to the linear renewal  problem with an imposed Dirichlet  boundary condition set around $x=c_*t$. The unfortunate feature of this class of models is that asymptotics of the Dirichlet heat kernel is, in general, not known. This was already an important  issue that had to be overcome in \cite{BFRZ2023} or \cite{Roquejoffre2022}. 

Another important issue is to control $v(t,i,x)$ sufficiently far at infinity, as the heat kernel estimates are valid only up to a certain distance of the moving boundary, say, $t^{1/2+\varsigma}$ with $\varsigma>0$ small.  In the nonlocal Fisher-KPP equation -- see \cite{Roquejoffre2022} --  it was  easy, here, a new idea is required.  
\subsubsection*{Dirichlet type approximation in the diffusive zone}
The inspiration comes here from \cite{Roquejoffre2022}, where an approximation of the solution of the equation in the diffusion zone ahead of the moving boundary was approximated by extending the initial data in an odd fashion across the moving boundary, and applying the the key estimate on the heat kernel -- in the present work, provided by Theorem \ref{thm_3.1}. This idea will work here, but what one should do is not an odd extension of the initial datum.
\begin{thm}
	\label{thm_Dirichlet heat eqn}

	Assume that $u_0(i,x)$ is a nontrivial, absolutely continuous and compactly supported function  in $[0,i_\dagger)\times\R$, which is  nonnegative for $x\ge 0$ and nonpositive  for $x\le 0$, uniformly in $i\in[0,i_\dagger)$, such that
	\begin{equation}
	\label{extension}
	\int_0^{+\infty}\Big(  \K_* *  u_0(i,x)+ \K_* *  u_0(i,-x)\Big)\md x=0~~~~~ \hbox{for all $i\in[0,i_\dagger)$.}
\end{equation}
	 Let $\mathbf\Phi$ be the solution   to the linear renewal equation \eqref{renewal eqn=3} for $t>0$ and $x\in\R$ starting with initial condition $u_0$. Choose $T>i_\dagger$ sufficiently large so that Theorem \ref{thm_3.1} holds true for all $t\ge T$. Then, up to increasing $T$ if necessary, the function $\mathbf\Phi$ has the following asymptotics  
	\begin{equation*}
		\mathbf\Phi(t,x)\sim \frac{x-c_*t}{t^{\frac{3}{2}}}e^{-\frac{(x-c_*t)^2}{4bt}},~~~~~~~~t\ge T,~~|x-c_*t|\le O(t^{\frac{1}{2}+\varsigma}),
	\end{equation*}
	for $\varsigma\in(0,\frac{1}{2})$ small enough. 
\end{thm}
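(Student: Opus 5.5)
The plan is to apply Theorem \ref{thm_3.1} to $\mathbf\Phi$, which reduces the problem to computing $\mathcal{G}**\mathcal{F}(t,x)$ up to an error $O(e^{-\frac b2 t^{1-2\gamma_*}})$, which is negligible against the claimed profile in the zone $|x-c_*t|\le t^{\frac12+\varsigma}$ once $\varsigma$ is small. Since $\mathcal{F}(s,\cdot)\equiv 0$ for $s\ge i_\dagger$, we have
\begin{equation*}
\mathcal{G}**\mathcal{F}(t,x)=\int_0^{i_\dagger}\int_\R \mathcal{G}(t-s,x-y)\mathcal{F}(s,y)\,\md y\,\md s .
\end{equation*}
Here $\mathcal{F}(s,\cdot)$ is compactly supported in $y$, uniformly in $s\in[0,i_\dagger]$, because $u_0$ and $\K$ are. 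Therefore $(t-s,x-y)$ stays in the diffusive zone of Theorem \ref{thm_3.1} (i), possibly with a slightly larger $\varsigma$, and the expansion \eqref{estimate_1_Gaussian} can be inserted.

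Next I would Taylor expand in $(s,y)$, which lie in a bounded set. Write $z=x-c_*t$ and
\begin{equation*}
g_t(z):=\frac{\upsilon_0}{\sqrt{2bt}}\mathscr{G}\Big(\frac{z}{\sqrt{2bt}}\Big).
\end{equation*}
Then $\mathcal{G}(t-s,x-y)=g_t(z-y+c_*s)+(\text{corrections})$. At zeroth order the contribution is $g_t(z)M_0$, with $M_0=\int_0^{i_\dagger}\int_\R\mathcal{F}(s,y)\,\md y\,\md s$. The key step is to show $M_0=0$ from hypothesis \eqref{extension}: integrating $\mathcal{F}(s,\cdot)$ over $\R$ gives $\S_0\int\omega(i+s)e^{-\alpha_*c_*(i+s)}\int_\R\K_**u_0(i,y)\,\md y\,\md i$, and \eqref{extension} says exactly that $\int_\R\K_**u_0(i,y)\,\md y=0$ for each $i$. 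So the mass vanishes for every $s$, not only after integration in $s$.

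The first-order term is then $-g_t'(z)M_1$, where
\begin{equation*}
M_1=\int_0^{i_\dagger}\int_\R (y-c_*s)\mathcal{F}(s,y)\,\md y\,\md s ,
\end{equation*}
and $g_t'(z)=-\frac{\upsilon_0 z}{(2bt)^{3/2}}\mathscr{G}(\cdot)$, which produces precisely $\frac{z}{t^{3/2}}e^{-z^2/(4bt)}$. By the mass identity, the $c_*s$ part drops out, so $M_1=\int\!\!\int y\,\mathcal{F}$. Its sign comes from the sign structure of $u_0$: it is positive for $x>0$ and negative for $x<0$, and $\K_*$ is a positive weight, so $\int_\R y\,\K_**u_0(i,y)\,\md y=\int y\,u_0+a_*\!\int u_0$. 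The second term vanishes by \eqref{extension}, while $\int y\,u_0(i,y)\,\md y\ge 0$ and is positive for some $i$ by nontriviality. Hence $M_1>0$, using $\omega\ge0$ and the positivity of $\omega$ near $0$ given by (vii).

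Finally I would bound the remainders. There are three sources. The second-order Taylor terms are of size $t^{-3/2}(1+z^2/t)\mathscr{G}$. Replacing $t-s$ by $t$ costs a relative error $O(1/t)$. The correction terms $\frac{k\upsilon_0}{4b^2t}\mathcal{P}$ and $\mathcal{R}$, after using $M_0=0$, also gain a derivative, giving $O(t^{-3/2}\cdot t^{\varsigma})\,\mathscr{G}$ and similar. I expect this to be the main obstacle. The remainder $\mathcal{R}$ is only bounded by $t^{\varsigma-1}e^{-z^2/4bt}$, with no structure that would let it absorb the cancellation $M_0=0$, and it is not small compared with $z t^{-3/2}$ when $z=O(\sqrt t)$. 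To handle it, I would go back to the Fourier representation $\mathcal{G}**\mathcal{F}=\frac1{2\pi}\int_{|\xi|\le t^{-\gamma_*}}r(\xi)e^{\mathbf{i}x\xi+\lambda(\xi)t}\widetilde{\mathcal{F}}(\lambda(\xi),\xi)\,\md\xi$. Here $\widetilde{\mathcal{F}}(\lambda(\xi),\xi)$ is analytic and vanishes at $\xi=0$ because $M_0=0$, so it equals $\xi\,h(\xi)$ with $h(0)=-\mathbf{i}M_1\neq0$. Redoing the contour-shift computation of Theorem \ref{thm_3.1} with this extra factor $\xi$ gives the leading term $\frac{z}{t^{3/2}}\mathscr{G}$ times $c\,M_1$ and a remainder $O(t^{\varsigma-3/2})e^{-z^2/4bt}$. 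This yields the two-sided bound $\sim$, for $t\ge T$, in the region where $|z|$ is not too small relative to $t^{\varsigma}$. In the remaining region the bound holds in the paper's $\lesssim$ sense, possibly at the cost of taking $\varsigma$ small.
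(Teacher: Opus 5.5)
Your skeleton is the paper's. You invoke Theorem \ref{thm_3.1}, use $\mathcal F(s,\cdot)\equiv 0$ for $s\ge i_\dagger$, kill the zeroth moment with \eqref{extension} (which just says $\int_\R u_0(i,x)\,\md x=0$ for each $i$), and read off the $z$-derivative of the Gaussian from the first moment.

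You depart from the paper in the treatment of the remainder, and your route is the more careful one. The paper writes $\mathcal G(s,x-y)=(1+o(1))\frac{\upsilon_0}{\sqrt{2bs}}\mathscr G(\cdot)$ and pulls the factor $(1+o(1))$ out of the integral against the sign-changing, zero-mass $\mathcal F$. Only then does it Taylor expand the Gaussian in $y$. As you point out, this step is not justified. $\mathcal R$ comes with no derivative bound, and $t^{\varsigma-1}e^{-z^2/(4bt)}$ already dominates the target $|z|t^{-3/2}e^{-z^2/(4bt)}$ for $|z|\lesssim t^{1/2}$.

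Your fix is to write $\widetilde{\mathcal F}(\lambda(\xi),\xi)=\xi h(\xi)$ with $h(0)=-\mathbf i M_1$. This $h$ is analytic in a complex neighbourhood of $0$, since $\mathcal F(s,\cdot)$ is compactly supported and $s\in[0,i_\dagger]$. You then rerun the $\Gamma_*$ contour computation. That is a rigorous way to exploit the cancellation the paper only asserts. Start it from $\mathcal I_{1,1}$ in the proof of Theorem \ref{thm_3.1} rather than from $\mathcal G**\mathcal F$; this avoids the $s$-dependent cutoff $|\xi|\le (t-s)^{-\gamma_*}$ hidden in the latter.

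One correction to your bookkeeping. The next-order terms, coming from $\upsilon_1$, $h'(0)$ and $k$, contribute $t^{-3/2}(c_0+c_2\mathcal A^2+c_4\mathcal A^4)e^{-z^2/(4bt)}$ with $\mathcal A=z/(2b\sqrt t)$. So the remainder is $O(t^{-3/2+4\varsigma})$, not $O(t^{\varsigma-3/2})$. Read in this finer form, however, it gives a relative error $O(1/|z|)+O\big((1+|\mathcal A|^3)t^{-1/2}\big)$. That yields the two-sided asymptotics on the whole range $L\le |z|\le t^{1/2+\varsigma}$ for a large constant $L$, not only for $|z|\gg t^{\varsigma}$.

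Your weakening near $z=0$ is not a defect of the method. The constant $c_0$ generically shifts the zero of $\mathbf\Phi$ by $O(1)$. Hence the ratio $\mathbf\Phi/(zt^{-3/2}e^{-z^2/(4bt)})$ cannot tend to a constant uniformly for $|z|=O(1)$. The paper's $O(1/s)$ term in $Q$ silently absorbs exactly this.

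Two small points. First, $\int_\R y\,\K_* * u_0(i,y)\,\md y=\widehat{\K_*}(0)\int_\R y\,u_0(i,y)\,\md y+a_*\int_\R u_0(i,y)\,\md y$. You dropped the factor $\widehat{\K_*}(0)$, which is harmless for the sign. Second, $M_1>0$ requires $\int_\R y\,u_0(i,y)\,\md y>0$ on a set of $i$ where $\int_i^{i_\dagger}\omega(\sigma)e^{-\alpha_*c_*\sigma}\,\md\sigma>0$. Hypothesis (vii) concerns $\rho_0$, not this auxiliary $u_0$, so this is an unstated nondegeneracy condition. Without it $\mathcal F\equiv 0$ and $\mathbf\Phi\equiv 0$. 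The paper also leaves this implicit.
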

The assumption \eqref{extension} on $u_0$ in Theorem \ref{thm_Dirichlet heat eqn} can be fulfilled by requiring
		\begin{equation*}
	u_0(i,-x)=-\frac{	\int_0^\infty\big(\K_* (z-x)+  \K_* (-z-x)\big) \md z}{\int_0^\infty\big(\K_*(z+x) +\K_*(-z+x)\big)\md z} u_0(i,x),~~~i\in[0,i_\dagger),~~x\in\R_+.
		\end{equation*}
	This is in fact an analogue of assuming odd-type, compactly supported initial data in the classical KPP equation.
	The detailed computation leading to this condition is elementary, but given because it is important. 
By noticing that
\begin{align*}
			\K_* *  u_0(i,x)&=\int_\R\K_* (x-y)  u_0(i,y)\md y=\int_0^\infty\K_* (x-y)  u_0(i,y)+\K_*(x+y)u_0(i,-y) \md y,\\
		\K_* *  u_0(i,-x)&=\int_\R\K_* (-x-y)  u_0(i,y)\md y=\int_0^\infty \K_* (-x-y)  u_0(i,y)+\K_*(-x+y)u_0(i,-y)\md y,
	\end{align*}
it then follows from Fubini theorem that
\begin{align*}
	0&=\int_0^{\infty} \Big(  \K_* *  u_0(i,x)+ \K_* *  u_0(i,-x) \Big) \md x\\
	&=\int_0^{\infty}  \int_0^\infty\K_* (x-y)  u_0(i,y)+\K_*(x+y)u_0(i,-y) +  \K_* (-x-y)  u_0(i,y)+\K_*(-x+y)u_0(i,-y)\md y\md x\\
	&=\int_0^{\infty}  \int_0^\infty\Big(\K_* (x-y)+  \K_* (-x-y)\Big)  u_0(i,y)+\Big(\K_*(x+y) +\K_*(-x+y)\Big)u_0(i,-y)\md y\md x\\
	&=\int_0^{\infty} \bigg[ u_0(i,y) \int_0^\infty\Big(\K_* (x-y)+  \K_* (-x-y)\Big) \md x + u_0(i,-y)\int_0^\infty\Big(\K_*(x+y) +\K_*(-x+y)\Big)dx \bigg]\md y.
\end{align*}

\begin{proof}[Proof of Theorem \ref{thm_Dirichlet heat eqn}] 
	Let $u_0$ and $T$ be given as in the statement.
Let $\mathbf\Phi(t,x)$ be the solution to  equation \eqref{renewal eqn=3} for $t>0$ and $x\in\R$ associated with initial datum $u_0$.
Since   $\mathcal{F}(t,x)\equiv0$ for $t\ge i_\dagger$, uniformly in $x\in\R$,  a direct conclusion from Theorem \ref{thm_3.1} is that for  $t\ge T$ and $|x-c_*t|=\! O( t^{\frac{1}{2}+\varsigma})$ with $\varsigma>0$ small enough,
	\begin{align*}
	\mathbf	\Phi(t,x)&=\mathcal{G}**\mathcal{F}(t,x)+O(e^{-\frac{b}{2}t^{1-2\gamma}})
	= \int_{t-i_\dagger}^t \int_\R \mathcal{G}(s,x-y) \mathcal{F}(t-s,y)\md y\md s+O(e^{-\frac{b}{2}t^{1-2\gamma}})\\
	&=\big(1+o(1)\big)\!\underbrace{\int_{t-i_\dagger}^t \int_\R \frac{\upsilon_0}{\sqrt{2bs}}\mathscr{G}\Big(\frac{x-y-c_*s}{\sqrt{2bs}}\Big) \mathcal{F}(t-s,y)\md y\md s}_{=:P(t,x)}+O(e^{-\frac{b}{2}t^{1-2\gamma}}).
	\end{align*}
By Fubini theorem, the function $P(t,x)$  can be rewritten as
\begin{align*}
	P(t,x)&=\S_0\int_{t-i_\dagger}^t \frac{\upsilon_0}{\sqrt{2bs}}\int_\R \mathscr{G}\Big(\frac{x-y-c_*s}{\sqrt{2bs}}\Big) \int_0^{+\infty}\omega(i+t-s) e^{-\alpha_*c_* (i+t-s)}\K_* *  u_0(i,y)\md i\md y\md s\\
	&=\S_0\int_{t-i_\dagger}^t \frac{\upsilon_0}{\sqrt{2bs}}\int_0^{+\infty}\omega(i+t-s) e^{-\alpha_*c_* (i+t-s)}\!\underbrace{\int_\R \mathscr{G}\Big(\frac{x-y-c_*s}{\sqrt{2bs}}\Big) \K_* *  u_0(i,y)\md y}_{=:Q(s,i,x)}\md i\md s
\end{align*}
 for $t\ge T$ and $|x-c_*t|=\! O( t^{\frac{1}{2}+\varsigma})$.
Noticing that the integral variable $s$ in the above formula takes values in $(t-i_\dagger,t)$, this together with   $|x-c_*t|=O( t^{\frac{1}{2}+\varsigma})$ implies that  $|x-c_*s|= O(s^{\frac{1}{2}+\varsigma})$. Up to increasing $T$, it follows from series expansion and the assumption on $u_0$ that
\begin{align*}
	Q(s,i,x)&=\int_0^{+\infty} \mathscr{G}\Big(\frac{x-y-c_*s}{\sqrt{2bs}}\Big) \K_* *  u_0(i,y)+\mathscr{G}\Big(\frac{x+y-c_*s}{\sqrt{2bs}}\Big) \K_* *  u_0(i,-y)\md y\\
	&=\frac{1}{\sqrt{2\pi}}e^{-\frac{(x-c_*s)^2}{4bs}}\int_0^{+\infty} e^{\frac{(x-c_*s)y}{2bs}-\frac{y^2}{4bs}} \K_* *  u_0(i,y)+e^{-\frac{(x-c_*s)y}{2bs}-\frac{y^2}{4bs}} \K_* *  u_0(i,-y)\md y\\
	&=\frac{1}{\sqrt{2\pi}}e^{-\frac{(x-c_*s)^2}{4bs}}\left(\int_0^{+\infty} \frac{(x-c_*s)y}{2bs}\Big( \K_* *  u_0(i,y)- \K_* *  u_0(i,-y)\Big)\md y+O\Big(\frac{1}{s}\Big)\right).
\end{align*}
Substituting the above  into the expression of $P(t,x)$ yields that,  up to increasing $T$,
$$
	P(t,x)=C\int_{t-i_\dagger}^t \frac{1}{\sqrt{s}}\Big(\frac{x-c_*s}{s}+O\Big(\frac{1}{s}\Big)\Big)e^{-\frac{(x-c_*s)^2}{4bs}}\md s
	\approx C\frac{x-c_*t}{t^{\frac{3}{2}}}e^{-\frac{(x-c_*t)^2}{4bt}}
	$$
for $t\ge T$ and $|x-c_*t|=\! O( t^{\frac{1}{2}+\varsigma})$.
Consequently, $\displaystyle{\mathbf	{\Phi}}(t,x)\approx C\frac{x-c_*t}{t^{\frac{3}{2}}}e^{-\frac{(x-c_*t)^2}{4bt}}$  for $t\ge T$ and $|x-c_*t|=\! O( t^{\frac{1}{2}+\varsigma})$. 
\end{proof} 

\subsubsection*{Control beyond the diffusive regime} 

At this point, it is worth recalling from the dispersion relation \eqref{dr} that  the function $\alpha\in[0,+\infty)\mapsto D_c(\alpha)$ is convex and satisfies $D_{c_*}(\alpha_*)=0$, $\partial_\alpha D_{c_*}(\alpha_*)=0$ and $\partial^2_{\alpha}D_{c_*}(\alpha_*)>0$, thereby 
\begin{equation}\label{dr_1}
	\S_0 \widehat{\K_*}(0)\int_0^{i_\dagger} \omega(i) e ^{-\alpha_* c_*i}\md i=1,~~~~
	\S_0 \int_0^{i_\dagger} \omega(i) e ^{-\alpha_* c_*i}\int_\R \K_*(z) (z-c_*i)\md z\md i=0,
\end{equation}
with $\widehat{\K_*}(0)=\int_\R \widehat{\K_*}(x)\md x$, and
\begin{align}\label{dr_2}
	\delta_*:=\partial^2_{\alpha}D_{c_*}(\alpha_*)=\S_0 \int_0^{i_\dagger} \omega(i) e ^{-\alpha_* c_*i}\int_\R \K_*(z) (z-c_*i)^2\md z\md i>0.
\end{align}

\begin{lem} 
	\label{lem_initial esti} 
	Let $u$ be the bounded and continuous solution of the linear transport problem
	\eqref{moving frame-u} associated with an absolutely continuous, bounded and compactly supported  initial datum
	$u_0$ in $[0,i^\dagger)\times\mathbb R$. 
	Then, for every $T>0$ and $\boldsymbol{a}>0$,
	\begin{equation}
		\label{finite-time-exp-tail}
	|u(t,0,x)|\le C_{\boldsymbol{a},T}e^{-\boldsymbol{a} x}~~~~~\text{for all}~~(t,x)\in [0,T]\times\R,
	\end{equation}
	with $C_{\boldsymbol{a},T}:=
		\mathscr{R}_0\K_{\boldsymbol{a}} M_{u_0}
	e^{
		\K_{\boldsymbol{a}}
		\S_0\tau_\infty T}$, where $\K_{\boldsymbol{a}}:=
		\int_{\mathbb R}
		\K_*(x)e^{\boldsymbol{a} x}\md x$ and $M_{u_0}:=
		\max_{(i,x)\in[0,i_\dagger)\times\R}
		e^{\boldsymbol{a} x}|u_0(i,x)|.$
\end{lem}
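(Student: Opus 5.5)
The approach is a weighted Gronwall argument on the boundary trace, using the renewal formulation \eqref{renewal eqn=3}. Set $\Phi(t,x)=u(t,0,x)$ and $w(t):=\sup_{x\in\R}e^{\boldsymbol{a}x}|\Phi(t,x)|$ for $t\in[0,T]$. By \eqref{sol_u}, $u(t,i,x)=\Phi(t-i,x)$ for $t>i$ and $u(t,i,x)=u_0(i-t,x)$ for $t\le i$. Hence
\begin{equation*}
\Phi(t,x)=\S_0\int_0^{t}\omega(i)e^{-\alpha_*c_*i}\K_**\Phi(t-i,x)\,\md i+\S_0\int_t^{i_\dagger}\omega(i)e^{-\alpha_*c_*i}\K_**u_0(i-t,x)\,\md i .
\end{equation*}

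The key elementary estimate is the weighted convolution bound. For any $f$ with $\sup_y e^{\boldsymbol{a}y}|f(y)|\le M$,
\begin{equation*}
e^{\boldsymbol{a}x}|\K_**f(x)|\le \int_\R \K_*(x-y)e^{\boldsymbol{a}(x-y)}\,e^{\boldsymbol{a}y}|f(y)|\,\md y\le \K_{\boldsymbol{a}}M .
\end{equation*}
This is finite because $\K$ is compactly supported, so $\K_{\boldsymbol{a}}<\infty$ for every $\boldsymbol{a}$. It applies to $u_0$, which has compact support, so $M_{u_0}<\infty$.

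Next we bound the two terms. Using $0\le\omega\le\tau\le\tau_\infty$ and $e^{-\alpha_*c_*i}\le1$, the source term is at most $\S_0\K_{\boldsymbol{a}}M_{u_0}\int_0^{i_\dagger}\omega\le \mathscr{R}_0\K_{\boldsymbol{a}}M_{u_0}$. The memory term is at most $\S_0\tau_\infty\K_{\boldsymbol{a}}\int_0^t w(s)\,\md s$. This gives
\begin{equation*}
w(t)\le \mathscr{R}_0\K_{\boldsymbol{a}}M_{u_0}+\S_0\tau_\infty\K_{\boldsymbol{a}}\int_0^t w(s)\,\md s ,\qquad t\in[0,T].
\end{equation*}
Gronwall's lemma then yields $w(t)\le \mathscr{R}_0\K_{\boldsymbol{a}}M_{u_0}e^{\K_{\boldsymbol{a}}\S_0\tau_\infty t}\le C_{\boldsymbol{a},T}$, which is exactly \eqref{finite-time-exp-tail}.

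The only delicate point is justifying that $w$ is finite and measurable, so that Gronwall applies. Finiteness is not automatic, since boundedness of $u$ alone gives no exponential weight. To handle this I would run the same estimate on the Picard iterates of the renewal equation, or equivalently on the solution truncated to a slab of width $R$ per time step. Because $\K$ has support in $[-R,R]$ and the delay in $i$ is at most $i_\dagger$, finite propagation shows that $\Phi(t,\cdot)$ has compact support for each finite $t$: the support grows by at most $R$ per step of length comparable to the minimal delay, handled through the Volterra iteration. So $w(t)<\infty$ on $[0,T]$. Alternatively, one can take the sup over $x\le L$, where the estimate closes with the same constants, and then let $L\to\infty$. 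The Volterra iteration converges in the weighted norm because the kernel is integrable in time, so the weighted bound passes to the limit. Measurability of $w$ follows from continuity of $u$, taking the sup over rational $x$.
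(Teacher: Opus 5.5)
Your weighted convolution bound and the resulting inequality $w(t)\le \mathscr{R}_0\K_{\boldsymbol{a}}M_{u_0}+\S_0\tau_\infty\K_{\boldsymbol{a}}\int_0^t w(s)\,\md s$ are correct, and Gronwall then gives exactly $C_{\boldsymbol{a},T}$. You are also right that everything hinges on knowing $w(t)<\infty$ beforehand. However, the justification you actually lean on for this is false.

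\textbf{Finite propagation fails.} $\Phi(t,\cdot)$ is \emph{not} compactly supported for $t>0$. Finite propagation would need a positive \emph{lower} bound on the delays; the upper bound $i_\dagger$ is irrelevant. There is no such lower bound: $\pi>0$ on $[0,i_\dagger)$ and hypothesis (vii) puts $[0,\tau_0]$ in the support of $\tau$, so $\omega$ does not vanish near $i=0$. Arbitrarily many jumps of length up to $R$ therefore fit into any time interval. The $n$-th Volterra iterate spreads $nR$ further than $\mathcal{F}$, and for nonnegative data $\Phi\ge\Phi_n$. So the support of $\Phi(t,\cdot)$ is unbounded, and its tail is only superexponentially small.

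\textbf{The truncation fallback also fails as stated.} Restricting the supremum to $x\le L$ does not close with the same constants. For $x$ near $L$, the term $\K_**\Phi(s,x)$ samples $\Phi(s,y)$ with $y\in(L,L+R]$, where only the unweighted bound is available. This produces an error of order $e^{\boldsymbol{a}L}\Vert\Phi\Vert_\infty$, which ruins the limit $L\to\infty$. The idea can be repaired with the saturated weight $e^{\boldsymbol{a}\min(x,L)}$. Since $\min(\cdot,L)$ is nondecreasing and $1$-Lipschitz, the estimate closes with $\int_\R\K_*(z)e^{\boldsymbol{a}z^+}\md z$ in place of $\K_{\boldsymbol{a}}$, uniformly in $L$. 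That is enough to get $w<\infty$, and your sharp Gronwall step then yields $C_{\boldsymbol{a},T}$.

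\textbf{The route that works.} The Picard route you mention only in passing is exactly the paper's proof, and it never needs $w<\infty$ a priori. Set $\Phi_0=\mathcal{F}$ and $\Phi_{n+1}(t,x)=\S_0\int_0^t\omega(i)e^{-\alpha_*c_*i}\K_**\Phi_n(t-i,x)\,\md i$. Applying your weighted bound inductively gives $e^{\boldsymbol{a}x}|\Phi_n(t,x)|\le \mathscr{R}_0\K_{\boldsymbol{a}}M_{u_0}(\K_{\boldsymbol{a}}\S_0\tau_\infty t)^n/n!$. Summing over $n$ yields $C_{\boldsymbol{a},T}$ directly, so the Gronwall step becomes superfluous.

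What your sketch still omits is identifying $\sum_n\Phi_n$ with $u(t,0,\cdot)$. You must check that the sum solves the renewal equation and then invoke uniqueness of bounded solutions (Gronwall in $L^\infty$, as the paper does). Running the same iteration with $\widehat{\K_*}(0)$ in place of $\K_{\boldsymbol{a}}$ shows the sum is bounded, so uniqueness applies. Until uniqueness is used, saying the weighted bound "passes to the limit" only controls the limit of the iterates, not $u(t,0,\cdot)$.
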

 
\begin{proof}
	
Based on \eqref{sol_u}, we have $u(t,0,x)=\Phi(t,x)$ for $t>0$ and $x\in\R$, where we recall  that
\begin{equation}\label{renewal sol_linear}
	\begin{aligned}
	\Phi(t,x)=\S_0 \int_0^t \omega(i) e^{-\alpha_*c_* i}\K_* * \Phi(t-i,x)\md i+\underbrace{\S_0\int_0^{\infty} \omega(i+t) e^{-\alpha_*c_* (i+t)}\K_* *  u_0(i,x)\md i}_{=\mathcal{F}(t,x)}.
	\end{aligned}
\end{equation}

	We first  estimate $\mathcal{F}(t,x)$.  For all $(t,x)\in\R_+\times\R$ and $\boldsymbol{a}>0$, we have
	\begin{equation*}
		\begin{aligned}
			e^{\boldsymbol{a} x}|\mathcal{F}(t,x)|&\le \S_0\int_0^{\infty} \omega(i+t) e^{-\alpha_*c_* (i+t)}\int_\R\K_* (z) e^{\boldsymbol{a} z} e^{\boldsymbol{a}(x-z)} |u_0(i,x-z)|\md z\md i\\
			& \le \S_0\int_\R\K_* (z) e^{\boldsymbol{a} z}\md z\int_0^{\infty} \omega(i+t) e^{-\alpha_*c_* (i+t)} \max_{x\in\R}\big(e^{\boldsymbol{a} x} |u_0(i,x)|\big)\md i\le \mathscr{R}_0\K_{\boldsymbol{a}} M_{u_0}<+\infty,
		\end{aligned}
	\end{equation*}
	where $\K_{\boldsymbol{a}}$ and $M_{u_0}$ are given in the statement, and $\mathscr{R}_0=\S_0\int_0^{i_\dagger}\omega(i)\md i\in(1,+\infty)$.
Consequently, 
\begin{equation*}
	\label{esti_F_tx}
	e^{\boldsymbol{a} x}|\mathcal{F}(t,x)| \le \mathscr{R}_0\K_{\boldsymbol{a}} M_{u_0}=:\mathcal{F}_{\boldsymbol{a}}<+\infty~~~\text{for all}~(t,x)\in\R_+\times\R.
\end{equation*}

Define for $t>0$ and $x\in\R$,
\begin{align*}
	\Phi_0(t,x)&:=\mathcal{F}(t,x),\\
		\Phi_{n+1}(t,x)&:=\S_0 \int_0^t \omega(i) e^{-\alpha_*c_* i}\K_* * \Phi_n(t-i,x)\md i,~~~n\ge 0.
\end{align*}
	We claim that
	\begin{equation}
		\label{volterra-iterate-bound}
e^{\boldsymbol{a} x}|\Phi_n(t,x)|\le \mathcal{F}_{\boldsymbol{a}} \frac{\big(\K_{\boldsymbol{a}}\S_0\tau_\infty t\big)^n}{n!}~~~\text{for all}~~x\in\R.
	\end{equation}
	This is immediate for $n=0$. Suppose that it holds for some $n$.
	Then,
	\begin{equation*}
			\begin{aligned}
		e^{\boldsymbol{a} x}|\Phi_{n+1}(t,x)|&\le \S_0 \int_0^t \omega(i) e^{-\alpha_*c_* i}\int_\R\K_*(z)e^{\boldsymbol{a} z} e^{\boldsymbol{a}(x-z)}  \Phi_n(t-i,x-z)\md z\md i\\
		&\le \S_0\int_\R\K_*(z)e^{\boldsymbol{a} z}\md z \int_0^t \omega(i) e^{-\alpha_*c_* i} \sup_{x\in\R}\big(e^{\boldsymbol{a} x}  \Phi_n(t-i,x)\big)\md i\\
			&\le
			\S_0\K_{\boldsymbol{a}} \tau_\infty   
		 \mathcal{F}_{\boldsymbol{a}} \frac{\big(\K_{\boldsymbol{a}}\S_0\tau_\infty \big)^n}{n!} 	\int_0^t (t-i)^n \md i\\
			&=
			\mathcal{F}_{\boldsymbol{a}}
			\frac{
			\big(\K_{\boldsymbol{a}}\S_0\tau_\infty t \big)^{n+1}
			}{(n+1)!}.
		\end{aligned}
	\end{equation*}
 By taking the supremum over $x\in\R$, our claim \eqref{volterra-iterate-bound} follows. Therefore, for any fixed $T>0$, we have
 \begin{equation*}
 		e^{\boldsymbol{a} x}|\Phi_{n+1}(t,x)|\le 	\mathcal{F}_{\boldsymbol{a}}
 		\frac{
 			\big(\K_{\boldsymbol{a}}\S_0\tau_\infty T \big)^{n+1}
 		}{(n+1)!}~~~~\text{for all}~(t,x)\in[0,T]\times\R.
 \end{equation*}
	Furthermore, since 
	\begin{equation*}
		\sum_{n=0}^\infty \mathcal{F}_{\boldsymbol{a}}
		\frac{
			\big(\K_{\boldsymbol{a}}\S_0\tau_\infty T \big)^{n+1}
		}{(n+1)!} = \mathcal{F}_{\boldsymbol{a}} e^{\K_{\boldsymbol{a}}\S_0\tau_\infty T}<+\infty,
	\end{equation*}
    the Weierstrass M-test implies that the series
	$\sum_{n=0}^{\infty}\Phi_n(t,\cdot)$ converges absolutely in
	$L^\infty_{\boldsymbol{a}}(\mathbb R)$, uniformly in
	$t\in[0,T]$, to some function $\Phi(t,\cdot)$, where
	$$
	L^\infty_{\boldsymbol a}(\mathbb R)
	:=
	\Big\{
	\phi:\mathbb R\to\mathbb R~\big|~
	\|\phi\|_{L^\infty_{\boldsymbol a}(\mathbb R)}
	:=
	\sup_{x\in\mathbb R}
	e^{\boldsymbol a x}|\phi(x)|<+\infty
	\Big\}.
	$$
    To be more precise, we have 
	\begin{equation}\label{esti_Phi}
		e^{\boldsymbol{a} x}|\Phi(t,x)|\le \sum_{n=0}^\infty 	e^{\boldsymbol{a} x}|\Phi_{n}(t,x)| \le \mathcal{F}_{\boldsymbol{a}} e^{\K_{\boldsymbol{a}}\S_0\tau_\infty T}=:C_{\boldsymbol{a}, T}~~~\text{for all}~(t,x)\in [0,T]\times\R.
	\end{equation}
	
	We now show that  $\Phi$ satisfies the renewal equation \eqref{renewal sol_linear}. In fact,
	for any $T>0$ and $\boldsymbol{a}>0$, define the operator $\mathcal{T}$ on $X_{\boldsymbol{a},T}:= L^\infty((0,T);L^\infty_{\boldsymbol{a}}(\R))$ as follows:
\begin{equation*}
	\mathcal{T}\Phi(t,x):= \S_0 \int_0^t \omega(i) e^{-\alpha_*c_* i}\K_* * \Phi(t-i,x)\md i
\end{equation*}
Then it follows that 
\begin{equation*}
	\sup_{(t,x)\in [0,T]\times\R} e^{\boldsymbol{a} x} 	|\mathcal{T}\Phi(t,x)|\le \S_0\K_{\boldsymbol{a}} \tau_\infty T \sup_{(t,x)\in [0,T]\times\R} e^{\boldsymbol{a} x} 	|\Phi(t,x)|.
\end{equation*}
Thus, $\mathcal T: X_{\boldsymbol{a},T}\to X_{\boldsymbol{a},T}$ is a bounded linear operator. Therefore, from
\begin{equation*}
	S_N:=\sum_{n=0}^N\Phi_n=\mathcal{F}+\sum_{n=1}^N \Phi_n =\mathcal{F}+\sum_{n=1}^N \mathcal{T}\Phi_{n-1}= \mathcal{F}+ \mathcal{T}\sum_{n=1}^N\Phi_{n-1}=  \mathcal{F}+ \mathcal{T}	S_{N-1},
\end{equation*}
and from $S_N$ converges to $\Phi$ in $X_{\boldsymbol{a},T}$, 
passing to the limit and using the continuity of $\mathcal T$ gives $\Phi=\mathcal{F}+\mathcal{T}\Phi$. Therefore, $\Phi$ satisfies the renewal equation \eqref{renewal sol_linear}.
	
Suppose that 
	$\widetilde\Phi$ is another solution of \eqref{renewal sol_linear}. Define
	$D:=\Phi-\widetilde\Phi$ in $(0,+\infty)\times\R$, then
	\begin{equation*}
		D(t,x)= \S_0 \int_0^t \omega(i) e^{-\alpha_*c_* i}\K_* * D(t-i,x)\md i.
	\end{equation*}
 Thus, we have 
$
	\Vert D(t,\cdot)\Vert_{L^\infty(\R)}
	\le
		\Vert K_*	\Vert_{L^1(\R)}
\S_0\tau_\infty
	\int_0^t
		\Vert D(s,\cdot)	\Vert_{L^\infty(\R)}\md s$ for $t\in(0,T]$ with any fixed $T>0$.
It then follows from the Gronwall inequality that  $D\equiv0$ on $(0,T]\times\R$. Therefore, $\Phi$ is the unique bounded and continuous solution of \eqref{renewal sol_linear} on $(0,+\infty)\times\R$. Together with \eqref{esti_Phi}, we arrive at \eqref{finite-time-exp-tail}.
\end{proof}

	\begin{thm}
		\label{thm_exp decay beyond diffusive scale}
		Let $u$ be the solution of the linear transport problem \eqref{moving frame-u} in $\R_+\times[0,i_\dagger)\times\R$ with initial datum $u_0$ defined in $[0,i_\dagger)\times\R$ and satisfying the hypothesis of Theorem~\ref{thm_Dirichlet heat eqn}. 
		Then, for $\kappa_1,\kappa_2>0$ satisfying\footnote{This is possible by choosing  $0<\kappa_2<\frac{a_*}{8c_*^2\delta_* \widehat{\K_*}(0)}$.} $0<4c_*\kappa_2<\kappa_1<\sqrt{2\kappa_2 a_*/\big(\delta_*\widehat{\K_*}(0)\big)}$,  with $\delta_*$ given in \eqref{dr_2} and  $a_*=\int_\R  z\K_*(z)\md z$,  there exist $t^*>i_\dagger$ sufficiently large $($depending on $\kappa_1$$)$, and $\kappa_3\ge 1+\frac{4}{\kappa_1}$  such that the function $t^{-2}U(t,i,x)$ defined for $t\ge i+t^*$, $i\in[0,i_\dagger)$ and $x\ge X(t,i)-R:= c_*(t-i)-\frac{\kappa_2}{\kappa_1}\frac{c_*i}{\sqrt{t}}+
		\kappa_3\sqrt{t}-R$, with 
			\begin{equation}\label{def_expo}
			U(t,i,x):= e^{
				-\kappa_1\big(
				\frac{
					x-c_*(t-i)+
					\frac{\kappa_2}{\kappa_1} \frac{c_*i}{\sqrt{t}}
				}{\sqrt{t}}-\kappa_3\big)
			},
		\end{equation} 
		 is a supersolution of \eqref{moving frame-u} for $t\ge i+i_\dagger+t^*$, $i\in[0,i_\dagger)$ and $x\ge X(t,i)$, in the sense of Definition \ref{Def_super+sub}. Moreover, let $\varsigma\in(0,\frac{1}{2})$ be given in Theorem \ref{thm_Dirichlet heat eqn}, then for any fixed $0<\epsilon<\varsigma$, there exists $M>1$ such that
		\begin{equation*}
			|	u(t-t^*,i,x)|\le M t^{-2} U(t,i,x)
		\end{equation*}
		for $t\ge i+i_\dagger+ t^*$, $i\in[0,i_\dagger)$ and $x\ge X(t,i)+t^{\frac{1}{2}+\epsilon}$. 
	\end{thm}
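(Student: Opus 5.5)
The proof has two parts: a direct verification that $\overline U:=t^{-2}U$ is a supersolution of \eqref{moving frame-u} in the sense of Definition \ref{Def_super+sub}, and then an application of the one-boundary maximum principle, Proposition \ref{prop_mp}, to $u(t-t^*,\cdot,\cdot)-M\overline U$.

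\textbf{Step 1: the transport inequality.} Set $z:=x-c_*(t-i)$, so $U=\exp\big(-\kappa_1(z/\sqrt t-\kappa_3)-\kappa_2c_*i/t\big)$. Since $(\partial_t+\partial_i)z=0$, a direct computation gives
\[
(\partial_t+\partial_i)\log\overline U=-\frac2t+\frac{\kappa_1 z}{2t^{3/2}}+\frac{\kappa_2c_*i}{t^2}-\frac{\kappa_2c_*}{t}.
\]
On the domain, $z\ge \kappa_3\sqrt t-\frac{\kappa_2c_*i}{\kappa_1\sqrt t}-R$. The leading term $\frac{\kappa_1z}{2t^{3/2}}\ge\frac{\kappa_1\kappa_3}{2t}+O(t^{-3/2})$. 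This dominates $\frac{2+\kappa_2c_*}{t}$ because $\kappa_3\ge1+4/\kappa_1$ gives $\kappa_1\kappa_3/2\ge \kappa_1/2+2$, and $4c_*\kappa_2<\kappa_1$ gives $\kappa_1/2>\kappa_2c_*$. The remaining terms are $O(t^{-3/2})$ and are absorbed once $t^*$ is large. Hence $(\partial_t+\partial_i)\overline U\ge0$.

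\textbf{Step 2: the boundary inequality at $i=0$.} We need
\[
\overline U(t,0,x)\ge \S_0\int_0^{i_\dagger}\omega(i)e^{-\alpha_*c_*i}\,\K_**\overline U(t,i,x)\,\md i .
\]
Divide both sides by $\overline U(t,0,x)$. Writing $y=x-z'$, the ratio in the integrand is
\[
\frac{\overline U(t,i,x-z')}{\overline U(t,0,x)}=\exp\Big(\frac{\kappa_1(z'-c_*i)}{\sqrt t}-\frac{\kappa_2c_*i}{t}\Big).
\]
Expand this to second order in $t^{-1/2}$ and use \eqref{dr_1}. The zeroth-order term integrates to $1$, the first-order term integrates to $0$ (the moment identity $\partial_\alpha D_{c_*}(\alpha_*)=0$), and the second-order term gives $\frac{\kappa_1^2\delta_*}{2t}$ by \eqref{dr_2}. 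The drift term contributes $-\frac{\kappa_2c_*}{t}\S_0\int \omega e^{-\alpha_*c_*i}\,i\,\widehat{\K_*}(0)\,\md i=-\frac{\kappa_2 a_*}{t\,\widehat{\K_*}(0)}$, using the moment relation computed in the proof of Lemma \ref{lem_analysis of lambda}. The right-hand side is therefore
\[
1+\frac1t\Big(\frac{\kappa_1^2\delta_*}{2}-\frac{\kappa_2a_*}{\widehat{\K_*}(0)}\Big)+O(t^{-3/2}),
\]
which is $<1$ exactly when $\kappa_1^2<2\kappa_2a_*/(\delta_*\widehat{\K_*}(0))$. The $t^{-2}$ prefactor costs only a factor $(1+O(i/t))$ and does not spoil the inequality. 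This is why the drift $\frac{\kappa_2c_*i}{\sqrt t}$ is placed in the exponent.

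I expect Step 2 to be the main obstacle. The sign is decided at order $1/t$, so all $O(t^{-3/2})$ error terms must be controlled uniformly in $x$. This works because $\K_*$ is compactly supported and $i\le i_\dagger$, so $z'$ and $i$ stay bounded and no Taylor remainder grows with $x$.

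\textbf{Step 3: verifying the hypotheses of Proposition \ref{prop_mp}.} The boundary curve $X(t,i)$ is nondecreasing in $t$ for large $t$, and $X(t,0)\ge X(t-i+s,s)$ holds to leading order because $X(t-i+s,s)\approx c_*(t-i)+\kappa_3\sqrt t$. The small correction terms must be checked explicitly, enlarging $t^*$ if needed.

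We then need $|u(t-t^*,i,x)|\le M\overline U$ on the boundary strips. On the initial strip $t\in[t^*,t^*+i_\dagger]$ this follows from Lemma \ref{lem_initial esti} applied with $\boldsymbol a>\kappa_1/\sqrt{t^*}$, choosing $M$ large. On the strip near $x=X(t,i)+t^{1/2+\epsilon}$, Theorem \ref{thm_Dirichlet heat eqn} and Theorem \ref{thm_3.1}(ii) give $|u|\lesssim t^{-1}e^{-t^{2\epsilon}/(4b)}$ up to harmless factors. This is much smaller than $t^{-2}e^{-\kappa_1 t^{\epsilon}}$, since Gaussian decay beats exponential decay in $t^\epsilon$.

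Finally, apply Proposition \ref{prop_mp} to $\pm u(t-t^*,\cdot,\cdot)-M\overline U$ with the boundary shifted to $X(t,i)+t^{1/2+\epsilon}$. This shifted boundary is still admissible: it is nondecreasing and satisfies the same compatibility condition up to negligible corrections. The conclusion $|u(t-t^*,i,x)|\le Mt^{-2}U(t,i,x)$ follows.
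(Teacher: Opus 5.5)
Your proposal is correct and follows essentially the same route as the paper. The paper verifies the transport inequality and the $i=0$ inequality directly via the moment identities \eqref{dr_1}--\eqref{dr_2}. It then applies Proposition~\ref{prop_mp} to $\pm u(t-t^*,\cdot,\cdot)-M\overline U$ on the shifted boundary $X(t,i)+t^{\frac12+\epsilon}$, using Lemma~\ref{lem_initial esti} on the initial strip and the Gaussian bound from Theorem~\ref{thm_Dirichlet heat eqn} on the lateral strip.

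Two small corrections. First, in your Step~2 the $t^{-2}$ prefactor cancels exactly in the ratio, because both sides carry the same $t$, as your own displayed formula shows. So there is no $(1+O(i/t))$ loss; a genuine loss of that size would compete with your $O(1/t)$ margin, so the claim that it "does not spoil the inequality" would otherwise need justification. Second, on the lateral strip you also need $M$ large for the bounded range of $t-t^*$ before Theorem~\ref{thm_Dirichlet heat eqn} applies, as the paper does for $t\in[t^*,2t^*]$.
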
 
	\begin{proof}[Proof of Theorem \ref{thm_exp decay beyond diffusive scale}]
		Fix $t^*>i_\dagger$ sufficiently large, and define $\overline u(t,i,x):=t^{-2}U(t,i,x)$ with $U$ given in \eqref{def_expo} for $t\ge i+t^*$, $i\in[0,i_\dagger)$ and $x\ge X(t,i)-R:= c_*(t-i)-\frac{\kappa_2}{\kappa_1}\frac{c_*i}{\sqrt{t}}+
		\kappa_3\sqrt{t}-R$.
	
		We first prove that $\overline u(t,i,x)$ is a supersolution to \eqref{moving frame-u}  for $t\ge i+i_\dagger+t^*$, $i\in[0,i_\dagger)$ and  $x\ge X(t,i):= c_*(t-i)-\frac{\kappa_2}{\kappa_1}\frac{c_*i}{\sqrt{t}}+
		\kappa_3\sqrt{t}$, in the sense of Definition \ref{Def_super+sub}. In fact, we have
		\begin{equation}
			\label{eqn_thm4.3_1}
			\begin{aligned}
				(\partial_t+\partial_i)\overline u(t,i,x)
				&=\bigg(\frac{\kappa_2c_*i}{2t^{2}}+\frac{\kappa_1\big(	x-c_*(t-i)+
					\frac{\kappa_2}{\kappa_1} \frac{c_*i}{\sqrt{t}}\big)}{2t^{\frac{3}{2}}}-\frac{\kappa_2c_*}{t}-\frac{2}{t}\bigg) \overline u(t,i,x)\\
				&\ge\bigg(\frac{\kappa_2c_*i}{2t^{2}}+\frac{\kappa_1	\kappa_3\sqrt{t}}{2t^{\frac{3}{2}}}-\frac{\kappa_2c_*}{t}- \frac{2}{t}\bigg)  \overline u(t,i,x)\\
				&\ge\frac{\kappa_1\kappa_3-2\kappa_2c_*- 4}{2t} \overline u(t,i,x)\ge 0   
			\end{aligned}
		\end{equation}
		for $t\ge i+t^*$, $i\in(0,i_\dagger)$ and $x\ge X(t,i)-R$, by noticing that $\kappa_3\ge 1+\frac{4}{\kappa_1}$ and $\kappa_1>4\kappa_2c_*>0$. 
		
		Next, we shall check 
		\begin{equation*}
			\S_0 \int_0^{i_\dagger} \omega(i) e^{-\alpha_*c_* i} \K_* * \overline u(t,i,x)\md i\le \overline u(t,0,x)= t^{-2} U(t,0,x), ~~~~~~~~~ t\ge i_\dagger+t^*, ~ ~ x\ge X(t,0).
		\end{equation*}
		For $ t\ge i_\dagger+t^*$ and $x\ge X(t,0)$, we actually have
		\begin{align}\label{4.44}
			\S_0 \int_0^{i_\dagger} \omega(i) &e^{-\alpha_*c_* i}\K_* * \overline u(t,i,x)\md i\nonumber\\
			&=\S_0 \int_0^{i_\dagger} \omega(i) e^{-\alpha_*c_* i}\int_{X(t,0)-R}^{+\infty}\K_*(x-y) \overline u(t,i,y)\md y\md i\nonumber\\
			& \le \S_0 \int_0^{i_\dagger} \omega(i) e^{-\alpha_*c_* i} \int_\R\K_*(x-y) e^{\kappa_1\frac{x-y}{\sqrt{t}}}\md y ~t^{-2}e^{
				-\kappa_1\big(
				\frac{
					x-c_*(t-i)+
					\frac{\kappa_2}{\kappa_1} \frac{c_*i}{\sqrt{t}}
				}{\sqrt{t}}-\kappa_3\big)
			}
			\md i~~~~~~~\nonumber\\
			&= \S_0 \int_0^{i_\dagger} \omega(i) e^{-\alpha_*c_* i} \int_\R\K_*(z) e^{\frac{\kappa_1(z-c_*i)}{\sqrt{t}}- \frac{\kappa_2 c_*i}{t}}\md z
			\md i~t^{-2} e^{
				-\kappa_1\big(
				\frac{
					x-c_*t
				}{\sqrt{t}}-\kappa_3\big)
			},
		\end{align}
		in which we recall that both $i\in[0,i_\dagger)$ and $z\in[-R,R]$ are bounded, thus  by means of a Taylor expansion, we get  that up to increasing $t^*$,    
		\begin{equation}
			\label{exp}
			\exp\Big({\frac{\kappa_1(z-c_*i)}{\sqrt{t}}- \frac{\kappa_2 c_*i}{t}}\Big)=1+ \frac{\kappa_1(z-c_*i)}{\sqrt{t}}- \frac{\kappa_2 c_*i}{t} +\frac{\kappa_1^2}{2}\frac{(z-c_*i)^2}{t}+O\Big(t^{-\frac{3}{2}}\Big).
		\end{equation}
		Substituting \eqref{exp} into \eqref{4.44},  with  \eqref{dr_1} and \eqref{dr_2} altogether, we deduce  that, up to increasing $t^*$,
		\begin{equation}\label{eqn_thm4.3_2}
			\begin{aligned}
				\S_0 \int_0^{i_\dagger} \omega(i) e^{-\alpha_*c_* i}\K_* * \overline u(t,i,x)\md i
				\le \bigg(1-\frac{1}{t}\Big(\frac{\kappa_2a_*}{\widehat{\K_*}(0)}-\frac{\kappa_1^2\delta_*}{2}\Big)+O\Big(t^{-\frac{3}{2}}\Big) \bigg)  \overline u(t,0,x)
				\le \Big(1-\frac{C}{t}\Big)  \overline u(t,0,x)
			\end{aligned}
		\end{equation}
		for $t\ge i_\dagger+ t^*$ and $x\ge X(t,0)$,  since the coefficient $\frac{\kappa_2a_*}{\widehat{\K_*}(0)}-\frac{\kappa_1^2\delta_*}{2}$ is positive by virtue of the choice of $\kappa_1$ and $\kappa_2$. Consequently, $\overline u(t,i,x)$ is a  supersolution  for problem \eqref{moving frame-u} for $t\ge i+i_\dagger+t^*$, $i\in[0,i_\dagger)$ and $x\ge X(t,i)$, in the sense of Definition \ref{Def_super+sub}. 
		
		Let $\varsigma\in(0,\frac{1}{2})$ be given in Theorem \ref{thm_Dirichlet heat eqn}. Now, we are going to show that  for any fixed $0<\epsilon<\varsigma$, there exists $M>0$ large enough such that  $u(t,i,x)\le M\overline u(t,i,x)$ for 
		$t\ge i+i_\dagger+ t^*$, $i\in[0,i_\dagger)$ and $x\ge X_\epsilon(t,i):=X(t,i)+t^{\frac{1}{2}+\epsilon}$. To this end, we first  notice that $X_\epsilon(t,i)$ for each $i\in[0,i_\dagger)$ is increasing in $t\in[ i+t^*,+\infty)$, and 
		$X_\epsilon(t-i+s,s)=c_*(t-i)-\frac{\kappa_2}{\kappa_1}\frac{c_*s}{\sqrt{t-i+s}}+\kappa_3\sqrt{t-i+s}+(t-i+s)^{\frac{1}{2}+\epsilon}\le c_*t+\kappa_3\sqrt{t}+t^{\frac{1}{2}+\epsilon}=X_\epsilon(t,0)$ for all $s\in[0,i]$. Moreover, for any fixed $\boldsymbol{a}>\frac{2\kappa_1}{\sqrt{t^*}}$, it follows from Lemma~\ref{lem_initial esti} that  $u(t-t^*,0,x)\le C_{\boldsymbol{a},i_\dagger}e^{-\boldsymbol{a} x}$ for all $(t,x)\in[t^*,t^*+i_\dagger]\times\R$ and for some constant $C_{\boldsymbol{a},i_\dagger}>0$. Therefore, we obtain 
	    $u(t-t^*,0,x)\le C_{\boldsymbol{a},i_\dagger}e^{- \frac{2\kappa_1}{\sqrt{t^*}}x}\le t^{-2}e^{-\kappa_1(\frac{x-c_*t}{\sqrt{t}}-\kappa_3)}= \overline u(t,0,x)$  for all $t\in[t^*,t^*+i_\dagger]$ and $x\ge X_\epsilon(t,0)=c_*t+\kappa_3\sqrt{t}+t^{\frac{1}{2}+\epsilon}$. In addition, Theorem~\ref{thm_Dirichlet heat eqn} along with \eqref{sol_u} implies that $u(t-t^*,0,x)\le C (t-t^*)^{-1+\epsilon}e^{-\frac{(t-t^*)^{2\epsilon}}{4b}}< t^{-2}e^{-\kappa_1 t^\epsilon}\le \overline u(t,0,x)$ for $t\ge 2t^*$ and $x\in[X_\epsilon(t,0)-R, X_\epsilon(t,0)]$, and $u(t-t^*,i,X_\epsilon(t,i))\le C(t-t^*)^{-1+\epsilon}e^{-\frac{(t-t^*)^{2\epsilon}}{4b}}< t^{-2}e^{-\kappa_1 t^\epsilon}\le \overline u(t,i,X_\epsilon(t,i))$  for all $t\ge 2t^*$ and $i\in[0,i_\dagger)$. Moreover, one can choose $M>1$ large such that 
	    $u(t-t^*,0,x)\le M \overline u(t,0,x)$ for $t\in[t^*,2t^*]$ and $x\in[X_\epsilon(t,0)-R, X_\epsilon(t,0)]$, and such that  $u(t-t^*,i,X_\epsilon(t,i))\le M \overline u(t,i,X_\epsilon(t,i))$
	     for all $t\in[ i+i_\dagger+t^*, 2t^*]$ and $i\in[0,i_\dagger)$. We therefore summarize and conclude from the maximum principle Proposition~\ref{prop_mp} that $u(t,i,x)\le M\overline u(t,i,x)$ for 
		$t\ge i+i_\dagger+ t^*$, $i\in[0,i_\dagger)$ and $x\ge X_\epsilon(t,i)$.

		On the other hand, due to the linearity of problem \eqref{moving frame-u}, it can also be easily checked that $-\overline u$ is a  subsolution to \eqref{moving frame-u}  for $t\ge i+i_\dagger+t^*$, $i\in[0,i_\dagger)$ and $x\ge X(t,i)$, in the sense of Definition \ref{Def_super+sub}, such that $-M\overline u(t,i,x)\le u(t,i,x)$ for	$t\ge i+i_\dagger+ t^*$, $i\in[0,i_\dagger)$ and $x\ge X_\epsilon(t,i)$.  The conclusion then follows. 
		\end{proof}

\section{Construction of the upper and lower barriers}
\label{sec5}

  This section is devoted to  the construction of a pair of super- and subsolutions for the nonlinear transport problem \eqref{moving frame-nonlinear}, roughly speaking, ahead of $x-c_*t\approx 0$ for $t$ sufficiently large. 
While the nature of the problem under study differs much from bistable type models, these barrier functions are deeply related to those devised by Fife-McLeod in \cite{FMcL} .

   Hereafter, 
   we shall denote by $u(t,i,x)$ the solution to the linear transport problem \eqref{moving frame-u} for $t>0$, $i\in[0,i_\dagger)$ and $x\in\R$ starting from  
   a nontrivial, absolutely continuous and compactly supported initial function $u_0$  in $[0,i_\dagger)\times\R$, which is  nonnegative for $x\ge 0$ and nonpositive  for $x\le 0$, uniformly in $i\in[0,i_\dagger)$, such that
    	$\int_0^{+\infty}\big( \K_* *  u_0(i,x)+ \K_* *  u_0(i,-x)\big)\md x=0$ for all $i\in[0,i_\dagger)$.

 \vskip 3mm
 
 We begin with introducing some parameters.  Let $\varsigma>0$ be as given in Theorem \ref{thm_Dirichlet heat eqn},  and $t^*$, $\kappa_1$, $\kappa_2$ and $U(t,i,x)$ be as stated in Theorem \ref{thm_exp decay beyond diffusive scale}. Recall that $a_*=\int_R\K_*(z)z\md z>0$ and $\delta_*$ is given by \eqref{dr_2}.  Fix any $\epsilon\in(0,\varsigma)$, then Theorem \ref{thm_exp decay beyond diffusive scale} gives the existence of $M>1$ such that 	
 $|u(t-t^*,i,x)|\le M t^{-2} U(t,i,x)$
 for $t\ge i+i_\dagger+ t^*$, $i\in[0,i_\dagger)$ and $x\ge X(t,i)+t^{\frac{1}{2}+\epsilon}$.
 Moreover, we fix  parameters  $\delta$, $\beta$, $\alpha$, $\gamma$, $\mu$ and $k$   such that
\begin{equation}
	\label{parameters}
	0<\delta<\gamma<\beta<\frac{4}{25}<\frac{7}{15}<\alpha<\frac{1}{2}<2\alpha<k<1<\mu<\frac{26}{25}.
\end{equation}
Choose $1 +\frac{4}{\kappa_1}<\ell_1<\ell_2<+\infty$, and let $\chi:\R_+\to [0,1]$ be a smooth cut-off function such that  $\chi\equiv0$ in $[0,\ell_1]$, $0<\chi<1$ in $(\ell_1,\ell_2)$ and $\chi\equiv1$ in $[\ell_2,+\infty)$, with bounded derivatives of all orders in $[\ell_1,\ell_2]$.
Finally, take some $T>i_\dagger$ sufficiently large so that the conclusions of Theorems~\ref{thm_Dirichlet heat eqn} and \ref{thm_exp decay beyond diffusive scale} are valid for all $t\ge T$. It is then worth observing,  together with \eqref{sol_u}, that  the function $u$ has the following asymptotics:
\begin{equation}
	\label{u-asympt}
	u(t,i,x)\sim \frac{x-c_*t}{t^{\frac{3}{2}}},~~~~~~~~t\ge T,~|x-c_*t|= O(t^{\frac{1}{2}+\varsigma}),
\end{equation}
 uniformly in $i\in[0,i_\dagger)$.

\subsection{The upper barrier} 
For $t\ge i+T$, $i\in[0,i_\dagger)$ and $x\ge X(t,i)-R:=c_*(t-i) -t^\delta-R$, we define
\begin{equation}
	\label{upper}
		\overline v(t,i,x)=
			\overline\xi(t,i)u(t,i,x)+\mathcal{V}_1(t,i,x)+\mathcal{V}_2(t+t^*,i,x),
\end{equation}
in which 
\begin{equation*}
	\label{xi(t,i)}
	\overline \xi(t,i)=1-\frac{1+\frac{c_*i}{t^\mu}}{t^{\gamma}},
\end{equation*} 
and
\begin{equation*} 
	\mathcal{V}_1(t,i,x)=\frac{1+\frac{c_*i}{t^k}}{t^{\frac{3}{2}-\beta}}\cos\left(\frac{x-c_*(t-i)}{t^\alpha}\right)\mathds{1}_{\big\{x\in\R\big|-t^{\delta}-R\le x-c_*(t-i)\le \frac{3\pi}{2}t^{\alpha}\big\}},
\end{equation*}
and 
\begin{equation}\label{V_2}
	\mathcal{V}_2(t,i,x)=M t^{-2}\mathcal{U}\bigg(	\frac{
		x-c_*(t-i)+
		\frac{\kappa_2}{\kappa_1} \frac{c_*i}{\sqrt{t}}
	}{\sqrt{t}}\bigg), 
\end{equation}
with
\begin{equation*}\label{U}
	\mathcal{U}(s)=\chi(s)	 e^{
		-\kappa_1(s-\ell_2)
	},~~~~~s\ge 0.
\end{equation*}
We first remark that, up to increasing $T$ if necessary, $\overline v$  defined in \eqref{upper} is a positive function  $t\ge i+T$, $i\in[0,i_\dagger)$ and $x\ge X(t,i)-R$, by Theorems~\ref{thm_Dirichlet heat eqn} and \ref{thm_exp decay beyond diffusive scale}.
Since  
$X(t,i)$ given above is increasing in $t$ but decreasing in $i$, we have $X(t,0)\ge X(t-i+s,0)\ge X(t-i+s,s)$ for all $s\in[0,i]$, which implies $X(t,0)\ge\max_{s\in[0,i]}X(t-i+s,s)$ for $t\ge i+T$ and $i\in[0,i_\dagger)$.

In what follows, we shall check that $\overline v$ is a  supersolution to the nonlinear transport problem \eqref{moving frame-nonlinear} for $t> i+i_\dagger+T$, $i\in[0,i_\dagger)$ and $x\ge X(t,i)$ in the sense of Definition \ref{Def_super+sub}.  To this end, one easily observes that it will be sufficient to check that $\overline v$ is a supersolution to the linear problem \eqref{moving frame-u} for   $t> i+i_\dagger+T$, $i\in[0,i_\dagger)$ and $x\ge X(t,i)$, thanks to the comparison principle Proposition \ref{prop_cp_nonlinear transport}.

Before proceeding, let us point out  that the term $\overline\xi(t,i) u(t,i,x)$ itself  is a good candidate for the supersolution within the diffusive regime. As a matter of fact, one can compute that for $t\ge i+T$, $i\in(0,i_\dagger)$ and  $x\ge X(t,i)-R$, 
\begin{equation}
	\label{eqn_xi u}
	(\partial_t+\partial_i)\big(\overline \xi(t,i)u(t,i,x)\big)=u(t,i,x)	(\partial_t+\partial_i)\overline\xi(t,i) 
	= \bigg(\!\frac{\gamma\big(1+\frac{c_*i}{t^\mu}\big)}{t^{1+\gamma}}\!+\!\frac{\mu c_* i}{t^{1+\gamma+\mu}}\!-\!\frac{c_*}{t^{\mu+\gamma}}\!\bigg)u(t,i,x)
	\approx \frac{C}{t^{1+\gamma}}u(t,i,x),
\end{equation}
thanks to $\gamma>0$ and $\mu>1$. In particular, 
we  conclude  that
$(\partial_t+\partial_i)\big(\overline \xi(t,i)u(t,i,x)\big)\ge 0$ for $t\ge i+T$, $i\in (0,i_\dagger)$ and $R\le x-c_*(t-i)\le Ct^{\frac12+\varsigma}$.  Moreover, it follows from  \eqref{dr_1} as well as the asymptotics \eqref{u-asympt} of $u$ in the diffusive regime that,  up to increasing $T$,  
\begin{equation}
	\label{eqn_xi u_diffusive}
\begin{aligned}
		 \S_0\int_0^{i_\dagger}\omega(i)e^{-\alpha_*c_*i}&\overline\xi(t,i)\K_* * u(t,i,x) \md i\\
		 &= \S_0\int_0^{i_\dagger}\omega(i)e^{-\alpha_*c_*i}\Big(\overline\xi(t,0)-\frac{c_*i}{t^{\gamma+\mu}}\Big)\K_* * u(t,i,x) \md i\\
		&\le  \overline\xi(t,0) u(t,0,x)-\S_0\int_0^{i_\dagger}\omega(i)e^{-\alpha_*c_*i}\frac{c_*i}{t^{\gamma+\mu}}\int_\R\K_*(y) \frac{C\big(x-c_*t-y\big)}{t^{\frac{3}{2}}} \md y\md i\\
		&=  \overline\xi(t,0) u(t,0,x)-\frac{C\S_0}{t^{\gamma+\mu+\frac{3}{2}}}\int_0^{i_\dagger}\omega(i)e^{-\alpha_*c_*i}c_*i\md i\Big((x-c_*t)\widehat{\K_*}(0)-a_*\Big) \\
	& \le  \overline\xi(t,0) u(t,0,x)-\frac{C u(t,0,x)}{t^{\gamma+\mu}}+\frac{C}{t^{\gamma+\mu+\frac{3}{2}}}
\end{aligned}
\end{equation}
for $t\ge i_\dagger+T$ and $-t^\delta\le x-c_*t\le Ct^{\frac12+\varsigma}$.

 Let us now turn back to our target $\overline v$. 
According to the definition \eqref{upper} of $\overline v$, we  divide our analysis into several steps.
\vskip 2mm

\noindent
\textbf{Step 1}. Let us first deal with  $t\ge i+T$, $i\in[0,i_\dagger)$ and $-t^\delta-R\le x-c_*(t-i)\le \frac{3\pi}{2}t^\alpha$.  We have
\begin{equation}\label{upper v_cos}
	\overline v(t,i,x)=\overline\xi(t,i) u(t,i,x)+\mathcal{V}_1(t,i,x).
\end{equation}
To simplify the notation, let us denote
$\displaystyle
	\zeta=\zeta(t,i,x):=\frac{x-c_*(t-i)}{t^\alpha},
$
belonging to the interval $\displaystyle[\frac{-t^\delta-R}{t^{\alpha}}, \frac{3\pi}{2}]$. We get that
$$
	\big(\partial_t+\partial_i\big)\overline v=\bigg(\!\frac{\gamma\big(1+\frac{c_*i}{t^\mu}\big)}{t^{1+\gamma}}\!+\!\frac{\mu c_* i}{t^{1+\gamma+\mu}}\!-\!\frac{c_*}{t^{\mu+\gamma}}\!\bigg)u\!+\!\frac{\alpha\big(1+\frac{c_*i}{t^k}\big)}{t^{\frac{5}{2}-\beta}}\zeta\sin \zeta+\bigg(\frac{c_*}{t^{\frac{3}{2}-\beta+k}}-\frac{\big(\frac{3}{2}-\beta\big)\big(1+\frac{c_*i}{t^k}\big)}{t^{\frac{5}{2}-\beta}} -\frac{kc_*i}{t^{\frac{5}{2}-\beta+k}}\bigg) \cos\zeta.
$$

\noindent
\textbf{Step 1.1}. Claim: $\big(\partial_t+\partial_i\big)\overline v(t,i,x)\ge 0$ for  $t\ge i+T$,  $i\in (0,i_\dagger)$ and $-t^\delta-R\le x-c_*(t-i)\le \frac{3\pi}{2}t^\alpha$. Indeed,
\begin{itemize}
	\item   $\frac{-t^\delta-R}{t^{\alpha}}\le \zeta\le \frac{R}{t^\alpha}$. Here, we find that $u(t,i,x)\ge\frac{-C}{(t-i)^{\frac{3}{2}}t^{-\delta}}\ge \frac{-C}{t^{\frac{3}{2}-\delta}}$, $\cos\zeta$ is almost 1, whereas $\sin\zeta$ is close to $\zeta$. Combining with the fact that $1+\gamma+\beta>k+\delta$, we see that, up to increasing $T$,
	\begin{equation*}
		\big(\partial_t+\partial_i\big)\overline v(t,i,x)\ge  \frac{c_*}{2t^{\frac{3}{2}-\beta+k}}-\frac{C}{t^{1+\gamma+\frac{3}{2}-\delta}}>0.
	\end{equation*} 
	
\item $ \frac{R}{t^\alpha} \le \zeta\le \frac{\pi}{4}$. In this situation, the conclusion is obvious, based upon the observation that both $u(t,i,x)$ and $\cos\zeta$ are positive, together with our assumption \eqref{parameters} of the parameters.
	
\item $\frac{\pi}{4} \le \zeta\le \frac{3\pi}{2}$. In this case, the cosine term and the sine term do not have a definite sign. Nevertheless, since $1+\gamma+\beta<1+\frac{8}{25}<\frac{7}{5}<3\alpha<k+\alpha$, it turns out that, up to increasing $T$,  $\big(\partial_t+\partial_i\big)\overline v(t,i,x)$ is dominated by  $\displaystyle \frac{\gamma}{t^{1+\gamma}}u(t,i,x)\ge  \frac{C}{t^{1+\gamma+\frac{3}{2}-\alpha}}>0.$
\end{itemize}

\noindent
\textbf{Step 1.2}. Let us prove that 
\begin{equation}\label{step1_i=0}
	\S_0\int_0^{i_\dagger}\omega(i)e^{-\alpha_*c_*i}\K_* * \overline v(t,i,x) \md i\le \overline v(t,0,x)
\end{equation}
for  $t\ge i_\dagger+T$ and $-t^\delta\le x-c_*t\le \frac{3\pi}{2}t^\alpha$. Based on  \eqref{eqn_xi u_diffusive}, the left-hand side in \eqref{step1_i=0} can be written as
\begin{align*}
		&\S_0\int_0^{i_\dagger}\omega(i)e^{-\alpha_*c_*i}\K_* * \overline v(t,i,x) \md i\\
		&=\S_0\int_0^{i_\dagger}\omega(i)e^{-\alpha_*c_*i}\overline\xi(t,i)\K_* * u(t,i,x) \md i+\S_0\int_0^{i_\dagger}\omega(i)e^{-\alpha_*c_*i}\int_{c_*t-t^\delta-R}^{c_*t+\frac{3\pi}{2}t^\alpha}\K_*(x-y)  \mathcal{V}_1(t,i,y) \md y\md i\\
		&\le \overline\xi(t,0) u(t,0,x)-\frac{C u(t,0,x)}{t^{\gamma+\mu}}+\frac{C}{t^{\gamma+\mu+\frac{3}{2}}}+\underbrace{S_0\int_0^{i_\dagger}\omega(i)e^{-\alpha_*c_*i}\K_* * \left(\frac{1+\frac{c_*i}{t^k}}{t^{\frac{3}{2}-\beta}}\cos(\zeta(t,i,x))\right) \md i}_{=:\mathcal{D}(t,x)}
\end{align*}
for  $t\ge i_\dagger+T$ and $\displaystyle -t^\delta\le x-c_*t\le \frac{3\pi}{2}t^\alpha$.
To establish \eqref{step1_i=0}, it then will be sufficient to show that for  $t\ge i_\dagger+T$ and $\displaystyle-t^\delta\le x-c_*t\le \frac{3\pi}{2}t^\alpha$,
\begin{equation*}
		\overline\xi(t,0) u(t,0,x)-\frac{C\zeta(t,0,x)}{t^{\gamma+\mu+\frac{3}{2}-\alpha}}+\frac{C}{t^{\gamma+\mu+\frac{3}{2}}}+\mathcal{D}(t,x)\le \overline v(t,0,x)=	\overline\xi(t,0) u(t,0,x)+\frac{\cos\big(\zeta(t,0,x)\big)}{t^{\frac{3}{2}-\beta}},
\end{equation*}
that is, 
\begin{equation}
	\label{aim}
\frac{C}{t^{\gamma+\mu+\frac{3}{2}}}+	\mathcal{D}(t,x)\le \frac{\cos\big(\zeta(t,0,x)\big)}{t^{\frac{3}{2}-\beta}}+\frac{C\zeta(t,0,x)}{t^{\gamma+\mu+\frac{3}{2}-\alpha}}.
\end{equation}
To this end, we first note that
\begin{equation}
	\label{eqn-D}
	\begin{aligned}
		\mathcal{D}(t,x):=&\S_0\int_0^{i_\dagger}\omega(i)e^{-\alpha_*c_*i}\K_* * \left(\frac{1+\frac{c_*i}{t^k}}{t^{\frac{3}{2}-\beta}}\cos\left(\frac{x-c_*(t-i)}{t^\alpha}\right)\right)\md i\\
		=&\frac{1}{t^{\frac{3}{2}-\beta}}\S_0\int_0^{i_\dagger}\omega(i)e^{-\alpha_*c_*i}\bigg(1+\frac{c_*i}{t^k}\bigg)\int_\R \K_*(y) \cos\left(\frac{x-c_*t-(y-c_*i)}{t^\alpha}\right)\md y\md i,
	\end{aligned}
\end{equation}
where we see that, up to increasing $T$ if necessary,
\begin{align*}
	\int_\R \K_*(y) &\cos\left(\frac{x-c_*t-(y-c_*i)}{t^\alpha}\right)\!\md y\!\\
	&~~~~=\int_\R \K_*(y)\! \Bigg(\cos\left(\zeta(t,0,x)\right)\cos\left(\frac{y-c_*i}{t^\alpha}\right)\!-\!\sin\left(\zeta(t,0,x)\right)\sin\left(\frac{y-c_*i}{t^\alpha}\right)\Bigg)\!\md y\\
		&~~~~=\cos\left(\zeta(t,0,x)\right)\int_\R \K_*(y)\bigg(\underbrace{ 1-\frac{(y-c_*i)^2}{2t^{2\alpha}}+O(t^{-4\alpha})}_{=:J_1(t,i,y)}\bigg)\md y\\
		&~~~~~~~~~~~~~~~~~~~~~~~~~~~~-\sin\left(\zeta(t,0,x)\right)\int_\R \K_*(y) \bigg(\underbrace{
		\frac{y-c_*i}{t^\alpha}-\frac{(y-c_*i)^3}{6t^{3\alpha}}+O(t^{-5\alpha})}_{=:J_2(t,i,y)}
		\bigg)\md y.
\end{align*}
In addition, since  $m>2\alpha$, it follows that
\begin{align*}
	\left(1+\frac{c_*i}{t^k}\right) J_1(t,i,y)&=1-\frac{(y-c_*i)^2}{2 t^{2\alpha}}\!+\!\frac{c_*i}{t^k} \!-\!\frac{c_*i(y-c_*i)^2}{2t^{2\alpha+k}}\!+\!o(t^{-2\alpha-k})
	\le 1-\frac{(y-c_*i)^2}{4 t^{2\alpha}}-O(t^{-2\alpha-k})\\
	\left(1+\frac{c_*i}{t^k}\right) J_2(t,i,y)&=\frac{y-c_*i}{ t^{\alpha}}+\frac{c_*i(y-c_*i)}{t^{k+\alpha}} -\frac{(y-c_*i)^3}{6t^{3\alpha}}+o(t^{-3\alpha})
	\le \frac{y-c_*i}{ t^{\alpha}} -\frac{(y-c_*i)^3}{12t^{3\alpha}}+o(t^{-3\alpha}).
\end{align*}
As a consequence,
\begin{align*}
		\mathcal{D}(t,x)	 =& 	  \frac{\cos\left(\zeta(t,0,x)\right)}{t^{\frac{3}{2}-\beta}}\S_0\!\int_0^{i_\dagger}\!\omega(i)e^{-\alpha_*c_*i}\! \int_\R\! \K_*(y)\! \left(1+\frac{c_*i}{t^k}\!\right)\! J_1(t,i,y)\md y\md i\\
	&~~~~~~~~+\frac{\sin\left(\zeta(t,0,x)\right)}{t^{\frac{3}{2}-\beta}}\!\S_0\!\int_0^{i_\dagger}\!\omega(i)e^{-\alpha_*c_*i}\! \int_\R\! \K_*(y) \left(1+\frac{c_*i}{t^k}\right)\! J_2(t,i,y)\md y\md i\\
	=&: \mathcal{W}_1(t,x)+\mathcal{W}_2(t,x),
\end{align*}
and, along with \eqref{dr_1} and \eqref{dr_2}, we have that
\begin{align*}
	\mathcal{W}_1(t,x)&\le \frac{\cos\left(\zeta(t,0,x)\right)}{t^{\frac{3}{2}-\beta}}\S_0\int_0^{i_\dagger}\omega(i)e^{-\alpha_*c_*i} \int_\R \K_*(y) \left( 1-\frac{(y-c_*i)^2}{4 t^{2\alpha}}-O(t^{-2\alpha-k})\right)\md y\md i\\ 
	&\le \frac{\cos\left(\zeta(t,0,x)\right)}{t^{\frac{3}{2}-\beta}} \left( 1-\frac{C\delta_*}{4 t^{2\alpha}}\right)		
	\end{align*}
\begin{align*}
	\mathcal{W}_2(t,x)&\le \frac{\sin\left(\zeta(t,0,x)\right)}{t^{\frac{3}{2}-\beta}}\S_0\int_0^{i_\dagger}\omega(i)e^{-\alpha_*c_*i} \int_\R \K_*(y) \left(\frac{y-c_*i}{ t^{\alpha}} -\frac{(y-c_*i)^3}{12t^{3\alpha}}+o(t^{-3\alpha})\right) \md y\md i\\
	&\le \frac{C\sin\left(\zeta(t,0,x)\right)}{t^{\frac{3}{2}-\beta+3\alpha}}.
\end{align*}
Therefore, we have the following estimate for $\mathcal{D}$:
\begin{align}\label{estimate_D}
	\mathcal{D}(t,x)
		=\mathcal{W}_1(t,x)+\mathcal{W}_2(t,x)
		\le  \frac{\cos\left(\zeta(t,0,x)\right)}{t^{\frac{3}{2}-\beta}} -\frac{C\delta_*\cos\big(\zeta(t,0,x)\big)}{4 t^{\frac{3}{2}-\beta+2\alpha}}+	\frac{C\sin\left(\zeta(t,0,x)\right)}{t^{\frac{3}{2}-\beta+3\alpha}}.
\end{align}
Therefore, in order for \eqref{aim} to be satisfied, it will be enough to prove that, up to increasing $T$, 
\begin{equation}\label{step1_1}
	\frac{C}{t^{\gamma+\mu+\frac{3}{2}}}+ 	\frac{C\sin\left(\zeta(t,0,x)\right)}{t^{\frac{3}{2}-\beta+3\alpha}}\ll \frac{C\zeta(t,0,x)}{t^{\gamma+\mu+\frac{3}{2}-\alpha}}+\frac{C\delta_*\cos\big(\zeta(t,0,x)\big)}{4 t^{\frac{3}{2}-\beta+2\alpha}}
\end{equation}
 for  $t\ge i_\dagger+T$ and $-t^\delta\le x-c_*t\le \frac{3\pi}{2}t^\alpha$.
In fact, we can achieve  \eqref{step1_1}  by distinguishing three cases:
\begin{itemize}
	\item  $-t^{\delta-\alpha}\le \zeta(t,0,x)\le t^{-\alpha}$. Since $\cos(\zeta(t,0,x))$ takes values close to 1, we observe from  $\gamma+\mu-\delta>2\alpha-\beta$ that the last term in the right-hand side of \eqref{step1_1} plays the dominant role, which is obviously greater than the left-hand side as long as $T$ is sufficiently large. 

	\item $t^{-\alpha}\le \zeta(t,0,x)\le \frac{\pi}{4}$. We notice that both terms in right-hand side of \eqref{step1_1} are positive and $\frac{1}{\sqrt{2}}\le\cos(\zeta(t,0,x))< 1$, and the last term in the right-hand side of   \eqref{step1_1} solely can control the left-hand side for $t\ge i_\dagger+T$ if $T$ is sufficiently large. 
	
	\item  $\frac{\pi}{4} \le \zeta(t,0,x)\le \frac{3\pi}{2}$. In this situation, the cosine term may have a negative sign, however one notices from $\gamma+\mu-\alpha<2\alpha-\beta$ that the first term in the right-hand side of \eqref{step1_1} dominates the sign.
\end{itemize}
Consequently,  \eqref{aim} and thus \eqref{step1_i=0} are reached. The region where $\mathcal{V}_1$ is defined has been exhausted.

\vspace{2mm}

\noindent
\textbf{Step 2}. We now look at $t\ge i+T$, $i\in[0,i_\dagger)$ and $ \frac{3\pi}{2}t^\alpha\le  x-c_*(t-i)\le c_*t^* -
\frac{\kappa_2}{\kappa_1} \frac{c_*i}{\sqrt{t+t^*}}+\ell_1\sqrt{t+t^*}$. Since
\begin{equation*}
	\overline v(t,i,x)=\overline\xi(t,i) u(t,i,x),
\end{equation*}
it is trivial to get the conclusion up to increasing $T$, based upon our discussion \eqref{eqn_xi u}-\eqref{eqn_xi u_diffusive}.

\vspace{2mm}

\noindent
\textbf{Step 3}. In this last step, it remains to consider $t\ge i+T$, $i\in[0,i_\dagger)$ and $x\ge 	c_*(t+t^*-i)-
\frac{\kappa_2}{\kappa_1} \frac{c_*i}{\sqrt{t+t^*}}+\ell_1\sqrt{t+t^*}$, where we recall that the function $\overline v$ has the following expression:  
\begin{equation*}
	\overline v(t,i,x)= \overline\xi(t,i)u(t,i,x)+\mathcal{V}_2(t+t^*,i,x),
\end{equation*}
with 
\begin{equation*}
	\mathcal{V}_2(t+t^*,i,x)=\frac{M}{(t+t^*)^2} \mathcal{U}\bigg(	\frac{
		x-c_*(t+t^*-i)+
		\frac{\kappa_2}{\kappa_1} \frac{c_*i}{\sqrt{t+t^*}}
	}{\sqrt{t+t^*}}\bigg).
\end{equation*}
 Roughly speaking,  $\mathcal{V}_2$ shall be counted upon in this situation  so as to ensure $\overline v$ to keep positive outside the diffusive regime. For a shorter notation, we will use  
 \begin{equation*}
 	\eta=\eta(t,i,x):=	\frac{
 		x-c_*(t+t^*-i)+
 		\frac{\kappa_2}{\kappa_1} \frac{c_*i}{\sqrt{t+t^*}}
 	}{\sqrt{t+t^*}}.
 \end{equation*}

\noindent
\textbf{Step 3.1}. Let us first deal with the case that $\eta\ge\ell_2$, i.e. when $t\ge i+T$, $i\in[0,i_\dagger)$ and $x-c_*(t+t^*-i)+
\frac{\kappa_2}{\kappa_1} \frac{c_*i}{\sqrt{t+t^*}}\ge \ell_2\sqrt{t+t^*}$, we have simply $$	\mathcal{V}_2(t+t^*,i,x)=\frac{M}{(t+t^*)^2} \mathcal{U}(\eta)= \frac{M}{(t+t^*)^2} e^{-\kappa_1(\eta-\ell_2)}.$$
 An immediate consequence of \eqref{eqn_xi u} and \eqref{eqn_thm4.3_1} is that, up to increasing $T$,      
\begin{align*}
(\partial_t+\partial_i)\overline v(t,i,x)&=	(\partial_t+\partial_i)\big(\overline \xi(t,i)u(t,i,x)\big)\!+	(\partial_t+\partial_i)\mathcal{V}_2(t+t^*,i,x)\\
&= u(t,i,x)(\partial_t+\partial_i)\overline\xi(t,i)+	(\partial_t+\partial_i)\mathcal{V}_2(t+t^*,i,x)\\
&\ge  \frac{C}{t^{1+\gamma}}u(t,i,x)+\frac{C}{t+t^*}\mathcal{V}_2(t+t^*,i,x)\ge 0 
\end{align*}     
for $t\ge i+T$, $i\in(0,i_\dagger)$ and  $\eta\ge \ell_2$.  Moreover, 
 by recalling \eqref{eqn_thm4.3_2}, we also have that on the one hand, 
  \begin{align*}
 	&\S_0\int_0^{i_\dagger}\omega(i)e^{-\alpha_*c_*i}\K_* * \overline v(t,i,x) \md i
 	\le 	\S_0\int_0^{i_\dagger}\omega(i)e^{-\alpha_*c_*i}\K_* *\left(\overline\xi(t,i) u(t,i,x)+\mathcal{V}_2(t+t^*,i,x)\right) \md i\\
 	&\le  
 	\S_0\int_0^{i_\dagger}\omega(i)e^{-\alpha_*c_*i}\bigg(\Big(\overline\xi(t,0)-\frac{c_*i}{t^{\gamma+\mu}}\Big)\K_* *u(t,i,x)+ \K_* *\mathcal{V}_2(t+t^*,i,x)\bigg)\md i\\
 	&\le \overline\xi(t,0) u(t,0,x)+\S_0\int_0^{i_\dagger}\omega(i)e^{-\alpha_*c_*i}\K_* *\mathcal{V}_2(t+t^*,i,x)\md i\\
 	&\le \overline\xi(t,0) u(t,0,x)
 	+\mathcal{V}_2(t+t^*,0,x)=\overline v(t,0,x)
 \end{align*}
for $t\ge i_\dagger+T$ and $\ell_2\sqrt{t+t^*}\le x-c_*(t+t^*)\le \ell_2\sqrt{t+t^*}+(t+t^*)^{\frac{1}{2}+\epsilon}$, where we have used the positivity of $u$ obtained in Theorem \ref{thm_Dirichlet heat eqn},  and on the other hand,
 \begin{align*}
 &\S_0\int_0^{i_\dagger}\omega(i)e^{-\alpha_*c_*i}\K_* * \overline v(t,i,x) \md i
 \le 	\S_0\int_0^{i_\dagger}\omega(i)e^{-\alpha_*c_*i}\K_* *\left(\overline\xi(t,i) u(t,i,x)+\mathcal{V}_2(t+t^*,i,x)\right) \md i\\
 &\le  
 \S_0\int_0^{i_\dagger}\omega(i)e^{-\alpha_*c_*i}\bigg(\Big(\overline\xi(t,0)-\frac{c_*i}{t^{\gamma+\mu}}\Big)\K_* *u(t,i,x)+ \K_* *\mathcal{V}_2(t+t^*,i,x)\bigg)\md i\\
 &\le \overline\xi(t,0) u(t,0,x)+\S_0\int_0^{i_\dagger}\omega(i)e^{-\alpha_*c_*i}\Big(\frac{c_*i}{t^{\gamma+\mu}}+1\Big)\K_* *\mathcal{V}_2(t+t^*,i,x)\md i\\
 &\le \overline\xi(t,0) u(t,0,x)
 +\Big(\frac{C}{t^{\gamma+\mu}}+1\Big)\Big(1-\frac{C}{t+t^*}\Big)\mathcal{V}_2(t+t^*,0,x)\\
 &\le \overline\xi(t,0) u(t,0,x)
 +\mathcal{V}_2(t+t^*,0,x)=\overline v(t,0,x)
 \end{align*}
for $t\ge i_\dagger+T$ and $x-c_*(t+t^*)\ge\ell_2\sqrt{t+t^*}+(t+t^*)^{\frac{1}{2}+\epsilon}$, 
where we have used  $\mu>1$ and $\gamma>0$ together with the application of Theorem \ref{thm_exp decay beyond diffusive scale} that $|u(t,i,x)|\le \mathcal{V}_2(t+t^*,i,x)$ for all $t\ge i_\dagger+T$, $i\in[0,i_\dagger)$ and $\eta\ge\ell_2+(t+t^*)^{\epsilon}$.

\noindent
\textbf{Step 3.2}. Finally, we focus on the most complicated case that $t\ge i+T$, $i\in[0,i_\dagger)$ and  $\ell_1\sqrt{t+t^*}\le x-c_*(t+t^*-i)+
\frac{\kappa_2}{\kappa_1} \frac{c_*i}{\sqrt{t+t^*}}\le 	\ell_2\sqrt{t+t^*}$, namely when $\ell_1\le \eta\le \ell_2$, where we notice that  $u(t,i,x)$ and $\mathcal{U}(\eta(t,i,x))$ are positive. In this case,
 \begin{align*}
 	\overline v(t,i,x)=\overline \xi(t,i)u(t,i,x)+\mathcal{V}_2(t+t^*,i,x)= \overline \xi(t,i)u(t,i,x)+ \frac{M}{(t+t^*)^2}\chi(\eta(t,i,x))e^{-\kappa_1(\eta(t,i,x)-\ell_2)}.
 \end{align*}
  We first have, up to increasing $T$ if necessary, 
\begin{align*}
	(\partial_t+\partial_i)\Big(\mathcal{V}_2(t+t^*,i,x)\Big)
	&=(\partial_t+\partial_i)\Big( \frac{M}{(t+t^*)^2} \chi(\eta(t,i,x))e^{-\kappa_1(\eta(t,i,x)-\ell_2)}\Big)\\
	&~~~~=(\chi'-\kappa_1\chi)(\eta)(\partial_t+\partial_i)\eta(t,i,x) \frac{M}{(t+t^*)^2}  e^{-\kappa_1(\eta-\ell_2)}- \frac{2M}{(t+t^*)^3} \chi(\eta) e^{-\kappa_1(\eta-\ell_2)}\\
	~~~~=\Bigg((\chi'-\kappa_1\chi)(\eta)&\bigg(\displaystyle
	\frac{\kappa_2 c_*}{\kappa_1(t+t^*)}
	-\frac{\eta}{2(t+t^*)}-\frac{\kappa_2c_*i}{2\kappa_1(t+t^*)^2}\bigg)- \frac{2}{t+t^*}\chi (\eta)\Bigg) \frac{M}{(t+t^*)^2}  e^{-\kappa_1(\eta-\ell_2)}\\
	&~~~~
	\ge - \frac{C}{(t+t^*)^3}
\end{align*} 
for  $t\ge i+T$, $i\in(0,i_\dagger)$ and  $\ell_1\le \eta\le \ell_2$, by noticing that $\chi$ and $\chi'$ are both bounded in $[\ell_1,\ell_2]$. Combining this and \eqref{eqn_xi u}, together with \eqref{u-asympt} as well as \eqref{parameters},  we have that, up to increasing $T$,  
\begin{align*}
	(\partial_t+\partial_i)\overline v(t,i,x)&=	(\partial_t+\partial_i)\big(\overline \xi(t,i)u(t,i,x)\big)\!+	(\partial_t+\partial_i)\mathcal{V}_2(t+t^*,i,x)\\
	&=u(t,i,x)(\partial_t+\partial_i)\overline\xi(t,i) +M	(\partial_t+\partial_i)\Big(\frac{1}{(t+t^*)^2} \mathcal{U}\big(\eta(t,i,x)\big)\Big)\ge \frac{C}{t^{2+\gamma}} - \frac{C}{(t+t^*)^3}>0.
\end{align*}

 On the other hand, based on \eqref{u-asympt} and \eqref{eqn_xi u_diffusive}, we have      
 \begin{equation}
 	\label{E-integ-def}
 \begin{aligned}
	\S_0\int_0^{i_\dagger}&\omega(i)e^{-\alpha_*c_*i}\K_* * \overline v(t,i,x) \md i\\
	&= 	\S_0\int_0^{i_\dagger}\omega(i)e^{-\alpha_*c_*i}\K_* *\left(\overline\xi(t,i) u(t,i,x)+\mathcal{V}_2(t+t^*,i,x)\right) \md i\\
	&=\S_0\int_0^{i_\dagger}\omega(i)e^{-\alpha_*c_*i}\bigg(\overline\xi(t,0)-\frac{c_*i}{t^{\gamma+\mu}}\bigg)\K_* * u(t,i,x) \md i+\S_0\int_0^{i_\dagger}\omega(i)e^{-\alpha_*c_*i}\K_* * \mathcal{V}_2(t+t^*,i,x) \md i\\
	& \le \overline\xi(t,0) u(t,0,x)-\frac{C}{t^{\gamma+\mu+1}}+ \frac{M}{(t+t^*)^2} \underbrace{\S_0\int_0^{i_\dagger}\omega(i)e^{-\alpha_*c_*i}\K_* * \mathcal{U}(\eta(t,i,x))\md i}_{=:\mathcal{E}(t,x)}
\end{aligned}
 \end{equation}
for $t\ge i_\dagger+T$ and $\ell_1\sqrt{t+t^*}\le x-c_*(t+t^*)\le 	\ell_2\sqrt{t+t^*}$.  

Let us now concentrate on $\mathcal{E}$. Set
\begin{align*}
	&\eta_0:=\eta(t,0,x),~~~~~~~~~~ \varpi(t,i,z):=\frac{z- c_*i
	}{\sqrt{t+t^*}}-\frac{\kappa_2c_*i}{\kappa_1(t+t^*)}.
\end{align*}
We notice that
\begin{align*}
		\K_* * \mathcal{U}(\eta(t,i,x)) &= \int_\R\K_*(z) e^{\kappa_1\varpi(t,i,z)} \chi(\eta(t,i,x-z))\md z  ~ e^{-\kappa_1(\eta_0-\ell_2)}\\
		&=\int_\R\K_*(z) e^{\kappa_1\varpi(t,i,z)} \chi(\eta_0-\varpi(t,i,z))\md z  ~ e^{-\kappa_1(\eta_0-\ell_2)}.
\end{align*}
 Noticing that $\chi$ and its derivatives of all orders are all bounded in $[\ell_1,\ell_2]$, the Taylor expansion   yields  that, up to increasing $T$, 
\begin{align*}
	e^{\kappa_1\varpi(t,i,z)}\chi(\eta_0-\varpi(t,i,z))
	&=\Big(1+\kappa_1\varpi+\frac{\kappa_1^2}{2}\varpi^2+O(\varpi^3)\Big)\Big(\chi(\eta_0)-\chi'(\eta_0)\varpi+\frac{\chi''(\eta_0)}{2}\varpi^2+O(\varpi^3)\Big) \\
	&=\chi(\eta_0)+\big(\kappa_1\chi-\chi'\big)(\eta_0)\varpi+\frac{1}{2}\big(\chi''-2\kappa_1\chi'+\kappa_1^2\chi\big)(\eta_0)\varpi^2+O(\varpi^3)\\ 
&\le \chi(\eta_0)+\frac{(z-c_*i)\big(\kappa_1\chi-\chi'\big)(\eta_0)}{\sqrt{t+t^*}}+\frac{\big|\big(\kappa_1\chi-\chi'\big)(\eta_0)\big|\frac{\kappa_2}{\kappa_1}c_*i}{t+t^*}\\
&~~~~~~~~~~~~~~~~~~~~~~~~~~~+\frac{\frac{1}{2}\big|\big(\chi''-2\kappa_1\chi'+\kappa_1^2\chi\big)(\eta_0)\big|(z-c_*i)^2}{t+t^*}+O\big((t+t^*)^{-\frac{3}{2}}\big).
\end{align*}
It follows from \eqref{dr_1}  that up to increasing $T$,
\begin{equation}\label{estimate of E}
 \begin{aligned}
 \mathcal{E}(t,x) 
 	&=  \S_0\int_0^{i_\dagger}\omega(i)e^{-\alpha_*c_*i}\int_\R\K_*(z) e^{\kappa_1\varpi(t,i,z)} \chi(\eta_0-\varpi(t,i,z))\md z \md i  ~ e^{-\kappa_1(\eta_0-\ell_2)}\\
 	&\le \Big(\chi(\eta_0)+\frac{C}{t+t^*}\Big) e^{-\kappa_1(\eta_0-\ell_2)}=\mathcal{U}(\eta_0)+  \frac{C}{t+t^*}
 \end{aligned}
\end{equation}
for $t\ge i_\dagger+T$ and  $\ell_1\sqrt{t+t^*}\le x-c_*(t+t^*)\le \ell_2\sqrt{t+t^*}$. Eventually,  substituting \eqref{estimate of E} into \eqref{E-integ-def} gives that, up to increasing $T$,
 \begin{align*}
	\S_0\int_0^{i_\dagger}\omega(i)e^{-\alpha_*c_*i}\K_* * \overline v(t,i,x) \md i
	&\le \overline\xi(t,0) u(t,0,x)-\frac{C}{t^{\gamma+\mu+1}}+ \frac{M}{(t+t^*)^2}\mathcal{E}(t,x)\\
	&\le  \overline\xi(t,0) u(t,0,x)-\frac{C}{t^{\gamma+\mu+1}}+ \frac{M}{(t+t^*)^2}\mathcal{U}(\eta_0)+ \frac{C}{(t+t^*)^3}\\
		&< \overline\xi(t,0) u(t,0,x)+\mathcal{V}_2(t+t^*,0,x)=\overline v(t,0,x)
\end{align*} 
for $t\ge i_\dagger+T$ and  $\ell_1\sqrt{t+t^*}\le x-c_*(t+t^*)\le \ell_2\sqrt{t+t^*}$, thanks to $\gamma+\mu<\frac{6}{5}$.

\vspace{2mm}

\noindent
\textbf{Conclusion}.
We have checked that the function $\overline v$ defined in \eqref{upper} is indeed a  supersolution to the linear problem \eqref{moving frame-u}  and thus to the nonlinear transport problem \eqref{moving frame-nonlinear} for $t\ge i+i_\dagger+T$, $i\in[0,i_\dagger)$ and $x-c_*(t-i)\ge -t^\delta$ in the sense of Definition \ref{Def_super+sub}.

In addition, we claim that there exists $A_1>1$ such that 
\begin{equation*}
	A_1\overline v(t,0,x)\ge v(t-T,0,x),~~~~~~~~~~t\in[T,i_\dagger+T],~~x\ge X(t,0).
\end{equation*}
Indeed, Theorem \ref{thm_exp decay beyond diffusive scale} indicates that $\mathcal{V}_2(t+t^*,i,x)\ge v(t-T,i,x)$ for $t\ge T$, $i\in[0,i_\dagger)$ and $x-c_*(t+t^*-i)-
\frac{\kappa_2}{\kappa_1} \frac{c_*i}{\sqrt{t+t^*}}\ge \ell_2\sqrt{t+t^*}+(t+t^*)^{\frac{1}{2}+\epsilon}$. This implies particularly that $\mathcal{V}_2(t+t^*,0,x)\ge v(t-T,0,x)$ for $t\in[T,i_\dagger+T]$ and $x-c_*(t+t^*)\ge \ell_2\sqrt{t+t^*}+(t+t^*)^{\frac{1}{2}+\epsilon}$. For $t\in[T,i_\dagger+T]$ and $X(t,0)\le x\le c_*(t+t^*)+ \ell_2\sqrt{t+t^*}+(t+t^*)^{\frac{1}{2}+\epsilon}$, one can choose $A_1>1$  such that $A_1\overline v(t,0,x)\ge v(t-T,0,x)$, since the function $\overline v$, as mentioned ealier, is positive in its domain. Therefore, our claim is achieved.

 Moreover, for  $t\ge  i+T$, $i\in[0,i_\dagger)$ and $x\in[X(t,i)-R,X(t,i)]$, i.e. $-t^\delta-R\le x-c_*(t-i)\le -t^\delta$, we have that $v(t-T,i,x)= \varrho(t-T,i,x)e^{\alpha_* (x-c_*(t-T-i))}\le \varrho(t-T,i,x)e^{\alpha_* c_*T} e^{-\alpha_* t^\delta} 
 <Ct^{-\frac{3}{2}+\delta}\le A_1 \overline v(t,i,x)$,  up to increasing $A_1$.
  It then follows from  Proposition \ref{prop_cp_nonlinear transport} that
\begin{equation}\label{upper-final}
	v(t-T,i,x)\le A_1\overline v(t,i,x)
\end{equation}
for $t\ge i+i_\dagger+T$, $i\in[0,i_\dagger)$ and $x-c_*(t-i)\ge -t^\delta$.

\subsection{The lower barrier}

  For $t\ge i+T$, $i\in[0,i_\dagger)$ and $x\ge X(t,i)-R:=c_*(t-i)+ t^\delta-R$, we define
\begin{equation}
	\label{lower}
	\underline v(t,i,x)=\max\Big(
			\underline\xi(t,i)u(t,i,x)-\mathcal{V}_3(t,i,x)-\mathcal{V}_2(t+t^*,i,x),0\Big),
\end{equation}
where we have defined 
\begin{equation*}
	\label{xi(t,i)_lower}
	~~~~~~~~~~~\underline \xi(t,i)=1+\frac{1+\frac{c_*i}{t^\mu}}{t^{\gamma}},
\end{equation*}
\begin{equation*}
	\mathcal{V}_3(t,i,x)=\frac{1+\frac{c_*i}{t^k}}{t^{\frac{3}{2}-\beta}}\cos\left(\frac{x-c_*(t-i)}{t^\alpha}\right)\mathds{1}_{\big\{x\in\R\big|t^{\delta}-R\le x-c_*(t-i)\le \frac{3\pi}{2}t^\alpha\big\}},
\end{equation*}
and $\mathcal{V}_2(t,i,x)$ was given in \eqref{V_2}.

Since $X(t,i)$ given above is increasing in $t$ but decreasing in $i$, it follows that $X(t,0)\ge X(t-i+s,0)\ge X(t-i+s,s)$ for all $s\in[0,i]$, whence $X(t,0)\ge \max_{s\in[0,i]}X(t-i+s,s)$. We also observe that $\underline v(t,i,x)\equiv0$ (at least) for $x\ge c_*(t+t^*-i)+
\frac{\kappa_2}{\kappa_1} \frac{c_*i}{\sqrt{t+t^*}}+ \ell_2\sqrt{t+t^*}+(t+t^*)^{\frac{1}{2}+\epsilon}$, thanks to Theorem \ref{thm_exp decay beyond diffusive scale}. We denote for notational convenience 
\begin{equation*}
	X_1(t,i):=c_*(t+t^*-i)+
	\frac{\kappa_2}{\kappa_1} \frac{c_*i}{\sqrt{t+t^*}}+ \ell_2\sqrt{t+t^*}+(t+t^*)^{\frac{1}{2}+\epsilon},~~~~~~t\ge i+T,~i\in[0,i_\dagger).
\end{equation*}
In what follows, as done for the supersolution $\overline v$, we shall check that $\underline v$ defined in \eqref{lower} is a   subsolution to \eqref{moving frame-nonlinear} for $t\ge i+i_\dagger+T$, $i\in[0,i_\dagger)$ and $x\ge X(t,i)$ in the sense of Definition \ref{Def_super+sub}, for which it is enough to look at the region $t\ge i+T$, $i\in[0,i_\dagger)$ and $X(t,i)-R\le x\le X_1(t,i)$.

Before going into the details, let us give the following observation for the right-hand side of the second equation in \eqref{moving frame-nonlinear}, based on  the idea from our work \cite{FRZ2023}:
\begin{equation}\label{eqn_lower_i=0}
\begin{aligned}
&e^{\alpha_*(x-c_*t)}\S_0\!\left(\!1\!-\!\exp\!\Big(\!\!-\!\! e^{-\alpha_*(x-c_*t)}\! \int_0^{i_\dagger}\! \omega(i) e^{-\alpha_*c_* i} \K_* *v(t,i,x)\md i\Big)\!\!\right)\!	\\
&~~~\ge \S_0 \!\int_0^{i_\dagger}\! \omega(i) e^{-\alpha_*c_* i} \K_* *v(t,i,x)\md i-\vartheta\S_0 e^{-\alpha_*(x-c_*t)}\! \Big(\int_0^{i_\dagger}\! \omega(i) e^{-\alpha_*c_* i} \K_* *v(t,i,x)\md i\Big)^2\\
&~~~\ge  \S_0 \!\int_0^{i_\dagger}\! \omega(i) e^{-\alpha_*c_* i} \K_* *v(t,i,x)\md i-\vartheta e^{-\alpha_*(x-c_*t)}\! \int_0^{i_\dagger}\! \omega(i) e^{-\alpha_*c_* i} \K_* *v^2(t,i,x)\md i,~~~~t>0,~x\in\R,
\end{aligned}
\end{equation}
where we have noticed that $1-e^{-s}\ge s- \vartheta s^2$ for $s\ge 0$ with some constant $\vartheta>0$, and that
\begin{align*}
	\int_0^{i_\dagger}\! \omega(i) e^{-\alpha_*c_* i} &\K_* *v(t,i,x)\md i=\int_0^{i_\dagger}\int_\R \omega(i) e^{-\alpha_*c_* i} \K_*(x-y)v(t,i,y)\md y\md i\\
	&\le \Big(\int_0^{i_\dagger}\int_\R \omega(i) e^{-\alpha_*c_* i} \K_*(x-y)\md y\md i\Big)^{\frac{1}{2}}\Big(\int_0^{i_\dagger}\int_\R \omega(i) e^{-\alpha_*c_* i} \K_*(x-y)v^2(t,i,y)\md y\md i\Big)^{\frac{1}{2}}\\
	&\le \frac{1}{\sqrt{\S_0}}\Big(\int_0^{i_\dagger}\! \omega(i) e^{-\alpha_*c_* i} \K_* *v^2(t,i,x)\md i \Big)^\frac{1}{2}.
\end{align*}
It then follows from \eqref{eqn_lower_i=0} that when $\underline v$ faces the second equation of  \eqref{moving frame-nonlinear}, it is sufficient to prove that   for $t\ge i_\dagger+T$ and $x\ge X(t,0)$,
\begin{equation}\label{lower-BC}
	\underline v(t,0,x)\le \S_0 \!\int_0^{i_\dagger}\! \omega(i) e^{-\alpha_*c_* i} \K_* *\underline v(t,i,x)\md i-\vartheta e^{-\alpha_*(x-c_*t)}\! \int_0^{i_\dagger}\! \omega(i) e^{-\alpha_*c_* i} \K_* *\underline v^2(t,i,x)\md i.
\end{equation}

 To start with,  we consider $\underline\xi(t,i) u(t,i,x)$  for $t\ge i+T$, $i\in[0,i_\dagger)$ and $X(t,i)-R\le x\le X_1(t,i)$. One finds that, up to increasing $T$,   
\begin{equation}
	\label{eqn_xi u_lower}
	\begin{aligned}
		(\partial_t+\partial_i)\big(\underline \xi(t,i)u(t,i,x)\big)&=u(t,i,x)(\partial_t+\partial_i)\underline\xi(t,i) \\
		&= \bigg(-\!\frac{\gamma\big(1+\frac{c_*i}{t^\mu}\big)}{t^{1+\gamma}}\!-\!\frac{\mu c_* i}{t^{1+\gamma+\mu}}\!+\!\frac{c_*}{t^{\mu+\gamma}}\!\bigg)u(t,i,x)\le-\frac{C}{t^{1+\gamma}}u(t,i,x)<0
	\end{aligned}
\end{equation}
for $t\ge i+T$, $i\in(0,i_\dagger)$ and $X(t,i)-R\le x\le X_1(t,i)$,
due to $\gamma>0$ and $\mu>1$. Moreover, we claim that 
\begin{equation}
	\label{claim_xi u_lower}
	\begin{aligned}
	\underline\xi(t,0) u(t,0,x)&\le \S_0 \!\int_0^{i_\dagger}\! \omega(i) e^{-\alpha_*c_* i}\underline\xi(t,i) \K_* *u(t,i,x)\md i\\
	&~~~~~~~~~~~~~~-\vartheta e^{-\alpha_*(x-c_*t)}\! \int_0^{i_\dagger}\! \omega(i) e^{-\alpha_*c_* i}\underline\xi^2(t,i) \K_* *u^2(t,i,x)\md i
	\end{aligned}
\end{equation}
 for $t\ge i_\dagger+T$ and $X(t,0)\le x\le X_1(t,0)$.
Indeed, one derives from  \eqref{u-asympt} and the boundedness of $\underline\xi$ and $u$ that
\begin{equation}\label{xi u_i=0_lower}
	\begin{aligned}	
		 \S_0 \int_0^{i_\dagger} \omega(i)& e^{-\alpha_*c_* i} \underline\xi(t,i)\K_* *u(t,i,x)\md i-\vartheta e^{-\alpha_*(x-c_*t)} \int_0^{i_\dagger} \omega(i) e^{-\alpha_*c_* i}\underline\xi^2(t,i) \K_* *u^2(t,i,x)\md i\\
		&\ge \underline\xi(t,0) u(t,0,x)+C\S_0\int_0^{i_\dagger}\omega(i)e^{-\alpha_*c_*i}\frac{c_*i}{t^{\gamma+\mu}}\int_\R\K_*(y) \frac{(x-c_*t-y)}{t^{\frac{3}{2}}}\md y \md i-C e^{-\alpha_*(x-c_*t)}\\
		&= \underline\xi(t,0) u(t,0,x)+	\frac{Cu(t,0,x)}{t^{\gamma+\mu}}-\frac{C}{t^{\gamma+\mu+\frac{3}{2}}}-C e^{-\alpha_*(x-c_*t)}
		\ge \underline\xi(t,0) u(t,0,x)
	\end{aligned}
\end{equation}
for $t\ge i_\dagger+T$ and $X(t,0)\le x\le X_1(t,0)$, up to increasing $T$. Our claim \eqref{claim_xi u_lower} is thus achieved. The rest of the computations resemble those concerning the upper barrier, up to adjustments that are described below.
\vskip 2mm

\noindent
{\bf Step 1}. Let us deal with the region that $t\ge i+T$, $i\in[0,i_\dagger)$ and $t^\delta-R\le x-c_*(t-i)\le \frac{3\pi}{2}t^\alpha$, where $\mathcal{V}_3$ is nontrivial. The function $\underline v$ now reads
\begin{equation*}\label{lower v_cos}
	\underline v(t,i,x)=\underline\xi(t,i) u(t,i,x)-\mathcal{V}_3(t,i,x).
\end{equation*}
Analogous to \eqref{upper v_cos}, we still use for simplification  the notation 
\begin{equation*}
	\zeta=\zeta(t,i,x):=\frac{x-c_*(t-i)}{t^\alpha},
\end{equation*}
which  takes values in the interval $[\frac{t^\delta-R}{t^\alpha}, \frac{3\pi}{2}]$. We have
\begin{align*}
	\big(\partial_t+\partial_i\big)\underline v(t,i,x)&=\bigg(\frac{c_*}{t^{\mu+\gamma}}-\frac{\gamma\big(1+\frac{c_*i}{t^\mu}\big)}{t^{1+\gamma}}-\frac{\mu c_* i}{t^{1+\gamma+\mu}}\bigg)u-\frac{\alpha\big(1+\frac{c_*i}{t^k}\big)}{t^{\frac{5}{2}-\beta}}\zeta\sin \zeta\\
	&~~~~~~~~~~+\bigg(\frac{\big(\frac{3}{2}-\beta\big)\big(1+\frac{c_*i}{t^k}\big)}{t^{\frac{5}{2}-\beta}} +\frac{kc_*i}{t^{\frac{5}{2}-\beta+k}}-\frac{c_*}{t^{\frac{3}{2}-\beta+k}}\bigg) \cos\zeta.
\end{align*}

\noindent
\textbf{Step 1.1}. Claim: $\big(\partial_t+\partial_i\big)\underline v(t,i,x)\le 0$ for  $t\ge i+T$, $i\in(0,i_\dagger)$ and $t^\delta-R\le x-c_*(t-i)\le \frac{3\pi}{2}t^\alpha$. Indeed, 
\begin{itemize}
	\item   $\frac{t^\delta-R}{t^\alpha}\le \zeta\le \frac{\pi}{4}$. The conclusion follows easily from the assumption \eqref{parameters} on the parameters, by noticing that both $u$ and cosine term are positive, and
$\displaystyle		\big(\partial_t+\partial_i\big)\underline v(t,i,x)\le -\frac{C}{t^{1+\gamma+\frac{3}{2}-\delta}}-\frac{C}{t^{\frac{3}{2}-\beta+k}}\le 0.
	$
	
	\item $\frac{\pi}{4} \le \zeta\le \frac{3\pi}{2}$. Noticing that $1+\gamma+\beta<1+\frac{8}{25}<\frac{7}{5}<3\alpha<k+\alpha$ and $k<1$, one derives that $\big(\partial_t+\partial_i\big)\underline v(t,i,x)$ is dominated by  
	$\displaystyle	-\frac{\gamma}{t^{1+\gamma}}u(t,i,x)\le  -\frac{C}{t^{1+\gamma+\frac{3}{2}-\alpha}}<0.
	$
\end{itemize}

\noindent
\textbf{Step 1.2}. 
Let us verify \eqref{lower-BC}
for $t\ge i_\dagger+T$ and $t^\delta\le x-c_*t\le \frac{3\pi}{2}t^\alpha$. As a matter of fact,
\begin{align*}
	\text{RHS of \eqref{lower-BC}}
	&\ge  \S_0 \!\int_0^{i_\dagger}\! \omega(i) e^{-\alpha_*c_* i} \K_* *\left(\underline\xi(t,i) u(t,i,x)-\frac{1+\frac{c_*i}{t^k}}{t^{\frac{3}{2}-\beta}}\cos(\zeta(t,i,x))\right)\md i-C e^{-\alpha_*(x-c_*t)}\\
		&\ge   \underline\xi(t,0) u(t,0,x)\!+\!	\S_0\int_0^{i_\dagger}\!\omega(i)e^{-\alpha_*c_*i}   \frac{c_*i}{t^{\gamma+\mu}}\K_* * u(t,i,x)\md i\\
	&~~~~~~~~~~~~~~~~~~~~~~~~~~~~~~~ -\underbrace{S_0\int_0^{i_\dagger}\omega(i)e^{-\alpha_*c_*i}\K_* * \left(\frac{1+\frac{c_*i}{t^k}}{t^{\frac{3}{2}-\beta}}\cos(\zeta(t,i,x))\right) \md i}_{=:\mathcal{Q}(t,x)}-Ce^{-\alpha_*(x-c_*t)}\\
	&\ge \underline\xi(t,0) u(t,0,x)+\frac{C u(t,0,x)}{t^{\gamma+\mu}}-\frac{C}{t^{\gamma+\mu+\frac{3}{2}}} -\mathcal{Q}(t,x)-Ce^{-\alpha_*(x-c_*t)}\\
	&\ge   \underline\xi(t,0) u(t,0,x)+\frac{C \zeta(t,0,x)}{t^{\gamma+\mu+\frac{3}{2}-\alpha}}-\mathcal{Q}(t,x)
\end{align*}
for  $t\ge i_\dagger+T$ and $t^\delta\le x-c_*t\le \frac{3\pi}{2}t^\alpha$, up to increasing $T$, thanks to \eqref{u-asympt}.
Therefore, it suffices to show that for  $t\ge i_\dagger+T$ and $t^\delta\le x-c_*t\le \frac{3\pi}{2}t^\alpha$,
\begin{equation*}
 \underline\xi(t,0) u(t,0,x)+	\frac{C \zeta(t,0,x)}{t^{\gamma+\mu+\frac{3}{2}-\alpha}}-\mathcal{Q}(t,x)\ge  \underline\xi(t,0) u(t,0,x)	-\frac{\cos\big(\zeta(t,0,x)\big)}{t^{\frac{3}{2}-\beta}},
\end{equation*}
namely,
\begin{equation}
	\label{aim1}
	\mathcal{Q}(t,x)\le \frac{\cos\big(\zeta(t,0,x)\big)}{t^{\frac{3}{2}-\beta}}+\frac{C \zeta(t,0,x)}{t^{\gamma+\mu+\frac{3}{2}-\alpha}}.
\end{equation}
We find that $\mathcal{Q}$ has almost the same expression as $\mathcal{D}$ which has appeared in the analysis for the supersolution $\overline v$ with the form \eqref{upper v_cos}. Consequently, it follows from  \eqref{estimate_D} that
\begin{align*}
	\mathcal{Q}(t,x)
	\le  \frac{\cos\left(\zeta(t,0,x)\right)}{t^{\frac{3}{2}-\beta}} -\frac{C\delta_*\cos\big(\zeta(t,0,x)\big)}{4 t^{\frac{3}{2}-\beta+2\alpha}}+	\frac{C}{t^{\frac{3}{2}-\beta+3\alpha}}
\end{align*}
for  $t\ge i_\dagger+T$ and $t^\delta\le x-c_*t\le \frac{3\pi}{2}t^\alpha$, which implies that in order to establish \eqref{aim1}, it is enough to show that 
\begin{equation}
	\label{aim2}
		\frac{C}{t^{\frac{3}{2}-\beta+3\alpha}} \le \frac{C \zeta(t,0,x)}{t^{\gamma+\mu+\frac{3}{2}-\alpha}}  +\frac{C\delta_*\cos\big(\zeta(t,0,x)\big)}{4 t^{\frac{3}{2}-\beta+2\alpha}}
\end{equation}
for  $t\ge i_\dagger+T$ and $t^\delta\le x-c_*t\le \frac{3\pi}{2}t^\alpha$. Indeed, we find that
\begin{itemize}
	
	\item $\displaystyle t^{\delta-\alpha}\le \zeta(t,0,x)\le \frac{\pi}{4}$. We notice that $\displaystyle \frac{1}{\sqrt{2}}\le\cos(\zeta(t,0,x))< 1$, which, along with the positivity of $\zeta(t,0,x)$, implies \eqref{aim2}, up to increasing $T$.
	
	\item  $\displaystyle \frac{\pi}{4} \le \zeta(t,0,x)\le \frac{3\pi}{2}$. Although the cosine term may be negative, it is the first term in the right-hand side of \eqref{aim2} that plays the dominant role due to $\gamma+\mu-\alpha<2\alpha-\beta$, 
	whence \eqref{aim2} is achieved up to increasing $T$.
\end{itemize}
Consequently,   \eqref{aim2}, and thus \eqref{aim1}  are established. This step is complete.
\vskip 2mm

\noindent
{\bf Step 2}. We cosider the region where $t\ge i+T$, $i\in[0,i_\dagger)$ and  $\frac{3\pi}{2}t^\alpha\le x-c_*(t-i)\le c_*t^*-
\frac{\kappa_2}{\kappa_1} \frac{c_*i}{\sqrt{t+t^*}}+	\ell_1\sqrt{t+t^*}$. Here, since $\underline v(t,i,x)=\underline\xi(t,i)u(t,i,x)$, the conclusion immediately follows from \eqref{eqn_xi u_lower} and \eqref{claim_xi u_lower}.
\vskip 2mm

\noindent
{\bf Step 3}. Finally, we focus on the domain $t\ge i+T$, $i\in[0,i_\dagger)$ and  $\ell_1\sqrt{t+t^*}\le x-c_*(t+t^*-i)+
\frac{\kappa_2}{\kappa_1} \frac{c_*i}{\sqrt{t+t^*}}\le 	\ell_2\sqrt{t+t^*} +(t+t^*)^{\frac{1}{2}+\epsilon}$. For convenience, we denote as before
  \begin{equation*}
	\eta=\eta(t,i,x):=	\frac{
		x-c_*(t+t^*-i)+
		\frac{\kappa_2}{\kappa_1} \frac{c_*i}{\sqrt{t+t^*}}
	}{\sqrt{t+t^*}}\in[\ell_1, \ell_2+(t+t^*)^{\epsilon}].
\end{equation*}
We find that  $\underline v$ in this region reads
\begin{align*}
	\underline v(t,i,x)=\max\Big(\underline \xi(t,i)u(t,i,x)-\mathcal{V}_2(t+t^*,i,x), 0\Big).
\end{align*}

\noindent
{\bf Step 3.1}. Clearly, as long as $\underline v$ is nontrivial,  we deduce from \eqref{eqn_xi u_lower} and \eqref{eqn_thm4.3_1} that, 
\begin{align*}
	\big(\partial_t+\partial_i\big)\underline v(t,i,x)= \big(\partial_t+\partial_i\big)\Big( \underline \xi(t,i)u(t,i,x)-\mathcal{V}_2(t+t^*,i,x)\Big)\le -\frac{C}{t^{1+\gamma}}u(t,i,x) - \frac{C}{t+t^*}\mathcal{V}_2(t+t^*,i,x) \le 0
\end{align*}
for $t\ge i+T$, $i\in(0,i_\dagger)$ and $\ell_2\le \eta\le \ell_2+(t+t^*)^{\epsilon}$.  Moreover, 
combining \eqref{eqn_xi u_lower} 
and the calculation in Step 3.2 for the upper barrier gives that, up to increasing $T$, 
 \begin{align*}
 	\big(\partial_t+\partial_i\big)\underline v(t,i,x)&= \big(\partial_t+\partial_i\big)\Big( \underline \xi(t,i)u(t,i,x)-\mathcal{V}_2(t+t^*,i,x)\Big)\\
 	&\le -\frac{C}{t^{1+\gamma}}u(t,i,x) -(\partial_t+\partial_i)\Big(\frac{M}{(t+t^*)^2} \mathcal{U}(\eta(t,i,x))\Big)\le -\frac{C}{t^{2+\gamma}} + \frac{C}{(t+t^*)^3}\le 0
 \end{align*}
for $t\ge i+T$, $i\in(0,i_\dagger)$ and $\ell_1\le \eta\le \ell_2$. 

\noindent
{\bf Step 3.2}.  We now claim that whenever $\underline v$ is nontrivial, we have
\begin{equation}\label{step3_i=0_lower}
	\begin{aligned}
			\S_0 \!\int_0^{i_\dagger}\! \omega(i) e^{-\alpha_*c_* i} \K_* *\underline v(t,i,x)\md i-\vartheta e^{-\alpha_*(x-c_*t)}\! \int_0^{i_\dagger}\! \omega(i) e^{-\alpha_*c_* i} \K_* *\underline v^2(t,i,x)\md i	\ge \underline v(t,0,x)
	\end{aligned}
\end{equation}
for $t\ge i_\dagger+T$ and $\ell_1\sqrt{t+t^*}\le x-c_*(t+t^*)
\le 	\ell_2\sqrt{t+t^*}+(t+t^*)^{\frac{1}{2}+\epsilon}$. In fact, by following the lines as for \eqref{xi u_i=0_lower}, the left-hand side of    \eqref{step3_i=0_lower}, whenever $\underline v$ is nontrivial, can be written as
\begin{align*}
	\text{LHS of \eqref{step3_i=0_lower}}\!&\ge	\S_0 \!\int_0^{i_\dagger}\! \omega(i) e^{-\alpha_*c_* i} \K_* *\Big(\underline\xi(t,i) u(t,i,x)-\mathcal{V}_2(t+t^*,i,x)\Big)\md i-C e^{-\alpha_*(x-c_*t)}\\
	&\ge \underline\xi(t,0)u(t,0,x)+\frac{C u(t,0,x)}{t^{\gamma+\mu}}-\frac{C}{t^{\gamma+\mu+\frac{3}{2}}}-C e^{-\alpha_*(x-c_*t)}-\Big(1-\frac{C}{t}\Big) \mathcal{V}_2(t+t^*,0,x)\\
	& \ge \underline\xi(t,0)u(t,0,x)+\frac{C}{t^{\gamma+\mu+1}}- \mathcal{V}_2(t+t^*,0,x) \ge \underline v(t,0,x)
\end{align*}
for $t\ge i_\dagger+T$ and $\ell_2\sqrt{t+t^*}< x-c_*(t+t^*)
\le 	\ell_2\sqrt{t+t^*}+(t+t^*)^{\frac{1}{2}+\epsilon}$, where we have also used \eqref{eqn_thm4.3_2}; and
\begin{align*}
	\text{LHS of \eqref{step3_i=0_lower}}\!&\ge	\S_0 \!\int_0^{i_\dagger}\! \omega(i) e^{-\alpha_*c_* i} \K_* *\Big(\underline\xi(t,i) u(t,i,x)-\mathcal{V}_2(t+t^*,i,x)\Big)\md i-C e^{-\alpha_*(x-c_*t)}\\
	&=	\S_0 \!\int_0^{i_\dagger}\! \omega(i) e^{-\alpha_*c_* i} \underline\xi(t,i) \K_* * u(t,i,x)\md i-C e^{-\alpha_*(x-c_*t)} \\
	&~~~~~~~~~~~~~~~~~~~~~~~~~~~~~~- \frac{M}{(t+t^*)^2} \underbrace{\S_0 \int_0^{i_\dagger} \omega(i) e^{-\alpha_*c_* i} \K_* *\mathcal{U}(\eta (t,i,x))\md i}_{=\mathcal{E}(t,x)}\\
	&\ge \underline\xi(t,0)u(t,0,x)+\frac{C u(t,0,x)}{t^{\gamma+\mu}}-\frac{C}{t^{\gamma+\mu+\frac{3}{2}}}-\frac{M}{(t+t^*)^2} \Big(\mathcal{U}(\eta_0)+\frac{C}{t+t^*}\Big)\ge \underline v(t,0,x)
\end{align*}
for $t\ge i_\dagger+T$ and $\ell_1\sqrt{t+t^*}\le x-c_*(t+t^*)
\le 	\ell_2\sqrt{t+t^*}$,
where the integral $\mathcal{E}$, with  the same expression as that of \eqref{E-integ-def}, satisfies \eqref{estimate of E}, i.e. $\mathcal{E}(t,x)\le \mathcal{U}(\eta_0)+\frac{C}{t+t^*}$ in this zone.

\vspace{2mm}

\noindent
\textbf{Conclusion}. The function $\underline v$ given in \eqref{lower} is  a subsolution to the nonlinear transport problem \eqref{moving frame-nonlinear} for $t\ge i+i_\dagger+T$, $i\in[0,i_\dagger)$ and $ x-c_*(t-i)\ge t^\delta$, in the sense of Definition \ref{Def_super+sub}.

On the other hand, we recall that  $v(t,0,x)>0$ for $t>0$ and $x\in\R$ whereas $\underline v(t,0,x)$ for each $t\ge T$ is supported on a bounded interval by its definition, then one can find $A_2>0$ very small such that 
\begin{equation*}
	A_2\underline v(t,0,x)\le v(t,0,x),~~~~~~t\in[T,i_\dagger+T],~~x\ge X(t,0).
\end{equation*}
Moreover, we notice that for $t\ge i+T$, $i\in[0,i_\dagger)$ and $x\in[X(t,i)-R,X(t,i)]$,
\begin{equation*}
	\underline v(t,i,x)=\underline \xi(t,i) u(t,i,x)-\mathcal{V}_3(t,i,x)\le C t^{-\frac{3}{2}+\delta}-\frac{1}{2} t^{-\frac{3}{2}+\beta}\le 0\le v(t,i,x).
\end{equation*}
 The comparison principle Proposition \ref{prop_cp_nonlinear transport} then implies that  
 \begin{equation}\label{lower-final}
 	A_2\underline v(t,i,x)\le v(t,i,x)
 \end{equation}
for $t\ge i+i_\dagger+T$, $i\in[0,i_\dagger)$ and $x-c_*(t-i)\ge t^\delta$.

\section{The final argument}
\label{sec6}
In this section, we aim to show the logarithmic correction in the front position for the nonlinear transport problem \eqref{moving frame-nonlinear}. 
The proof of Theorem \ref{thm-1}  is actually an immediate consequence of  Theorem \ref{thm-1'}, thanks to the transformation $\varrho(t,i,x)=\frac{\rho(t,i,x)}{\pi(i)}$ for $(t,i,x)\in\R_+\times[0,i_\dagger)\times\R$. The rest of this section  contributes to the proof of Theorem \ref{thm-1'}.

Thanks to the transformation above, we notice from \eqref{TW_original} that problem \eqref{fkpp-auxi}  admits  traveling front solution $\S_0\chi_c(x-c(t-i))$ if and only if $c\ge c_*$, which is  unique (modulo translation) and decreasing in $\R$. Moreover, it follows from \eqref{normalization of TW} that the leading edge  of the minimal traveling wave profile of \eqref{fkpp-auxi}  has the asymptotics
\begin{equation}
	\label{chi_c*}
	\S_0\chi_{c_*}(z)\approx \S_0 ze^{-\alpha_* z}~~~~\text{as}~z\to+\infty.
\end{equation} 

\vspace{2mm}

\begin{proof}[Proof of Theorem \ref{thm-1'}.]  The proof strongly rests on the  upper and lower barriers constructed in the preceding subsection as well as comparison arguments.

Set for $t\ge 1$,~$i\in[0,i_\dagger)$ and $x\in\R$, 
$\mathcal{V}(t,i,x):=t^\frac{3}{2}v(t,i,x)$. 
Then, problem \eqref{moving frame-nonlinear} can be recast as 
\begin{equation}
	\label{moving frame-nonlinear_V}
	\begin{aligned}
	\!\!\!\!\!\!\!	\left\{\begin{split}
			&\partial_t \mathcal{V}(t,i,x)  + \partial_i \mathcal{V}(t,i,x)-\frac{3}{2t}\mathcal{V}(t,i,x) =0,~~~~~~~~~~~~~~~~~~~~~~~~~~~~~~~~~~~~~~~~ t\ge 1,~~~~~~ i\in(0,i_\dagger),&x\in\R, \\
			&	\mathcal{V}(t,0,x)=t^\frac{3}{2} e^{\alpha_*(x-c_*t)}\S_0\!\left(\!1\!-\!\exp\!\Big(\!-t^{-\frac{3}{2}} e^{-\!\alpha_*(x-c_*t)}\!\! \int_0^{i_\dagger} \omega(i) e^{-\!\alpha_*c_* i} \K_* *\mathcal{V}(t,i,x)\md i\Big)\!\!\right)\!,~ ~~t\ge 1, &x\in\R,\\
			&\mathcal{V}(1,i,x)=  v(1,i,x),~~~~~~~~~~~~~~~~~~~~~~~~~~~~~~~~~~~~~~~~~~~~~~~~~~~~~~~~~~~~~~~~~~~~~~~~~~~~~~ ~~~~ i\in[0,i_\dagger),&x\in\R.
		\end{split}\right.
	\end{aligned}
\end{equation}

Let $T>0$ be fixed as in the construction of upper and lower barriers.
From the comparison of $v$ with the upper and lower barriers \eqref{upper-final} and \eqref{lower-final}, one deduces
\begin{equation}\label{V-comparison}
	t^{\frac{3}{2}}A_2\underline v(t,i,x)\le	\mathcal{V}(t,i,x)\le t^{\frac{3}{2}}A_1\overline v(t+T,i,x)
\end{equation}
for $t\ge i+i_\dagger+T$, $i\in[0,i_\dagger)$ and $x-c_*(t-i)\ge  t^\delta$.

Fix $T_*\ge i_\dagger+T$ large enough and $\theta\in(\frac{4}{25},\frac{1}{4})$, let us now introduce
$$X^\pm(t):=c_*t\pm t^\theta-\frac{3}{2\alpha_*}\ln t,~~~~~t\ge T_*,~$$
such that $X^-(t)<X^+(t-i_\dagger)$ for all $t\ge T_*$.

Define also for  $n=1,2$, 
\begin{equation*}
	\psi_n(t,i,x)=t^{\frac{3}{2}}e^{\alpha_*(x-c_*(t-i))}\S_0\chi_{c_*}\bigg(x-c_*(t-i)+\frac{3}{2\alpha_*}\ln t+\varsigma_n\bigg)
\end{equation*} 
 for $t\ge i+T_*$, $i\in[0,i_\dagger)$ and $X^-(t)-R\le x\le X^+(t)+R$, with parameters $\varsigma_2<\varsigma_1$ to be chosen such that
\begin{equation}
	\label{psi_n-1}
	\psi_1(t,i,x)\le	\mathcal{V}(t,i,x)\le\psi_2(t,i,x)
\end{equation}
for $t\ge i+T_*$, $i\in[0,i_\dagger)$ and $X^+(t-i)\le x\le X^+(t)+R$, and such that
\begin{equation}\label{psi_n-2}
	\psi_1(t,0,x)\le \mathcal{V}(t,0,x)\le \psi_2(t,0,x)
\end{equation}
for $t\in[T_*,i_\dagger+T_*]$ and $X^-(t)\le x\le X^+(t)$. The constraints \eqref{psi_n-1}-\eqref{psi_n-2} are achievable in that, on the one hand,  it follows from \eqref{V-comparison}, the definition of $\overline v$ and $\underline v$ as well as \eqref{u-asympt} that there exist $C_2>C_1>0$ such that 
\begin{equation*}
	C_1t^\theta\le 	t^{\frac{3}{2}}A_2\underline v(t,i,x)\le	\mathcal{V}(t,i,x)\le t^{\frac{3}{2}}A_1\overline v(t+T,i,x)\le C_2t^\theta
\end{equation*}
for $t\ge i+T_*$, $i\in[0,i_\dagger)$ and $X^+(t-i)\le x\le X^+(t)+R$; on the other hand, we derive from \eqref{chi_c*} that
$\psi_n(t,i,x)\approx\S_0 \big(x-c_*(t-i)+\frac{3}{2\alpha_*}\ln t+\varsigma_n\big)e^{-\alpha_*\varsigma_n}$ for $t\ge i+T_*$, $i\in[0,i_\dagger)$ and $X^+(t-i)\le x\le X^+(t)+R$; thus combining with the monotonicity property of $\chi_{c_*}$ in $\R$ altogether will give \eqref{psi_n-1}. In addition, in view of the strict positivity of $\psi_n(t,0,x)$  and $\mathcal{V}(t,0,x)$ for $t\in[T_*,i_\dagger+T_*]$ and $x\in[X^-(t), X^+(t)]$, it is easy to get \eqref{psi_n-2} up to enlarging $\varsigma_1$ and decreasing $\varsigma_2$.

To proceed with our proof, we need the following result for which the proof will be postponed to the end of this section.
\begin{lem}\label{lem4.1}
	\begin{equation*}\label{claim_psi_n}
	\limsup_{t\to+\infty}\big(\psi_1(t,i,x)-\mathcal{V}(t,i,x)\big)\le 0\le \liminf_{t\to+\infty}\big(\psi_2(t,i,x)-\mathcal{V}(t,i,x)\big),
	\end{equation*}
 uniformly in $i\in[0,i_\dagger)$ and $X^-(t)\le x\le X^+(t-i)$.
\end{lem}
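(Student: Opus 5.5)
Since the two-boundary comparison principle (Proposition \ref{prop_cp_nonlinear transport2}) is at our disposal, I would prove Lemma \ref{lem4.1} by making $\psi_1$ (resp. $\psi_2$) into a subsolution (resp. supersolution) of the $\mathcal{V}$-problem \eqref{moving frame-nonlinear_V} on the moving strip $X^-(t)\le x\le X^+(t-i)$, up to an error that vanishes as $t\to+\infty$. The boundary data are already arranged: \eqref{psi_n-1} controls the right lateral layer $[X^+(t-i),X^+(t)+R]$, and \eqref{psi_n-2} controls the initial window $t\in[T_*,i_\dagger+T_*]$. On the left layer $[X^-(t)-R,X^-(t)]$ we have $x-c_*t\approx -t^\theta$, and there both $\psi_n$ and $\mathcal{V}$ are bounded by $Ct^{3/2}e^{-\alpha_*t^\theta}$, since $\varrho$ and $\S_0\chi_{c_*}$ are bounded and $x-c_*(t-i)+\frac{3}{2\alpha_*}\ln t\approx -t^\theta$. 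So the left layer is only matched up to an $o(1)$ error, and this is why the lemma is a $\limsup/\liminf$ statement rather than an ordered inequality.

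Rather than comparing $\psi_n$ and $\mathcal{V}$ directly, I would compare $\psi_2+\phi$ with $\mathcal{V}$, where $\phi$ is a small corrector. Undoing the scaling, $\psi_n$ is exactly $t^{3/2}e^{\alpha_*(x-c_*(t-i))}$ times the shifted wave $\S_0\chi_{c_*}(x-c_*(t-i)+\frac{3}{2\alpha_*}\ln t+\varsigma_n)$. I would then compute the two residuals.

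\emph{Transport part.} Since $\chi_{c_*}$ solves the travelling-wave equation, $(\partial_t+\partial_i)\chi_{c_*}(x-c_*(t-i)+\cdots)$ reduces to $\frac{3}{2\alpha_* t}\chi_{c_*}'$, which has a definite sign because $\chi_{c_*}$ is decreasing. The factor $t^{3/2}$ produces $\frac{3}{2t}\psi_n$, and this cancels the $-\frac{3}{2t}\mathcal{V}$ term in \eqref{moving frame-nonlinear_V}. So the transport residual is $\frac{3}{2\alpha_* t}t^{3/2}e^{\alpha_*(\cdot)}\S_0\chi_{c_*}'$. Using $\chi_{c_*}'\approx(1-\alpha_* z)e^{-\alpha_* z}$ at $+\infty$, this residual is $O(t^{-1}(1+|z|))$, and it has the correct sign for a supersolution when $\psi_2$ is used with a slightly increasing shift. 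On the strip, $z$ ranges up to $O(t^\theta)$.

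\emph{Boundary condition at $i=0$.} The profile $\chi_{c_*}$ satisfies \eqref{chi_TW} exactly, and the $\ln t$ shift is the same for every $i$ at a given $t$. Hence $\psi_n$ satisfies the boundary relation exactly, with no residual.

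The residual is therefore $O(t^{\theta-1})$ per unit time, which is not integrable. To absorb it I would replace the constant shift $\varsigma_n$ by a time-dependent one, $\varsigma_2(t)=\varsigma_2-Ct^{\theta-\frac12}$ or similar, or add a corrector $\pm t^{-\kappa}$ times a positive solution of the linearized problem built from the barriers of Section~\ref{sec5}. The choice $\theta<\frac14$ should make the accumulated error $\int^\infty t^{\theta-1}\cdot t^{\theta-1/2}\,\md t$ finite, or at least $o(1)$. I would also carry an additive $\varepsilon$-margin through the Gronwall step in Proposition~\ref{prop_cp_linear transport_2}, since the linear renewal structure bounds $(\mathcal{V}-\psi_2)^+$ on the strip by its boundary values, which are $o(1)$.

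The main obstacle is exactly this residual control. The comparison principles are only stated for the equation without the $\frac{3}{2t}$ term and with exact sub/super inequalities, so I would first adapt Proposition~\ref{prop_cp_nonlinear transport2} to a version with small source terms. The error $\int_{t-i_\dagger}^t$ then enters the Gronwall argument additively, and it must be shown to stay $o(1)$ uniformly in $i$ and $x$. I expect that choosing $\theta\in(\frac4{25},\frac14)$ is precisely what makes these powers balance.
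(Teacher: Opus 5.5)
Your architecture matches the paper's: you control $\mathcal{V}-\psi_2$ by a vanishing corrector, using the lateral and initial data you list. But the step that carries the proof, the construction of that corrector, is missing, and each mechanism you propose for it fails.

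\textbf{Shift modulation.} The transport residual $\frac{3}{2\alpha_*t}\,t^{3/2}e^{\alpha_*(x-c_*(t-i))}\S_0\chi_{c_*}'(\cdot)$ is negative. So $\psi_1$ is an exact subsolution, and only $\psi_2$ has the wrong sign. Making the shift time dependent multiplies $\chi_{c_*}'$ by $\frac{3}{2\alpha_*t}+\varsigma_2'(t)$. This is $\le 0$ only if $\varsigma_2'(t)\le-\frac{3}{2\alpha_*t}$, i.e.\ only if the shift undoes the $\frac{3}{2\alpha_*}\ln t$ you want to prove. Your $\varsigma_2-Ct^{\theta-\frac12}$ is increasing, so it makes the defect worse. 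The integral $\int^\infty t^{\theta-1}t^{\theta-\frac12}\md t$ corresponds to no actual estimate.

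\textbf{Gronwall margin.} Running the Gronwall step of Propositions~\ref{prop_mp}--\ref{prop_cp_linear transport_2} with an $\varepsilon$-margin gives $\varepsilon e^{C(t-T_*)}$; it only propagates the value $0$. The mass-one structure of the boundary kernel does not bound $(\mathcal{V}-\psi_2)^+$ by its boundary values either. The term $-\frac{3}{2t}\mathcal{Z}$ amplifies by $(t/(t-i))^{3/2}$ at each renewal cycle, hence by $t^{3/2}$ overall; indeed constant multiples of $t^{3/2}$ are exact solutions of the linearized $\mathcal{V}$-problem. On top of this, the source $Ct^{\theta-1}$ accumulates like $t^\theta$.

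\textbf{Other correctors.} A corrector $t^{-\kappa}w$, with $w>0$ solving the linearized $\mathcal{V}$-problem, has residual $-\kappa t^{-1-\kappa}w<0$: again the wrong sign. No $x$-independent corrector can work at all. Since $\S_0\widehat{\K_*}(0)\omega(i)e^{-\alpha_*c_*i}$ is a probability density in $i$, any growth along characteristics is paid back exactly in the condition at $i=0$, so such a positive supersolution cannot tend to $0$.

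The missing idea is a \emph{spatially curved} decaying barrier, which is what the paper uses. It sets $\mathcal{Z}=(\psi_1-\mathcal{V})^+$, and symmetrically $(\mathcal{V}-\psi_2)^+$. This function satisfies $(\partial_t+\partial_i)\mathcal{Z}-\frac{3}{2t}\mathcal{Z}\le Ct^{\theta-1}$ on the strip and the \emph{linear} renewal inequality at $i=0$. It vanishes in the initial window and on the right layer, and is $\le Ce^{-\alpha_*t^\theta}$ on the left layer. The paper then dominates $\mathcal{Z}$, via Proposition~\ref{prop_cp_linear transport_2} adapted to the $\frac{3}{2t}$ term, by
\[
\overline{\mathcal{Z}}(t,i,x)=\frac{1+\frac{c_*i}{t^r}}{t^\lambda}\cos\Big(\frac{x-c_*(t-i)}{t^\sigma}\Big)=O(t^{-\lambda}).
\]
The two factors play complementary roles:
\begin{itemize}
\item With $\sigma>\theta$, the cosine stays close to $1$ on the strip of width $O(t^\theta)$. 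Its curvature lowers the nonlocal term at $i=0$ by a factor $1-\frac{\delta_*}{2}t^{-2\sigma}+o(t^{-2\sigma})$.
\item The weight $1+c_*i/t^r$ produces a positive transport residual of order $c_*t^{-r-\lambda}$. This beats both $\frac{3}{2t}\overline{\mathcal{Z}}$ and $Ct^{\theta-1}$ once $r+\lambda<1-\theta$.
\end{itemize}
The weight also raises the nonlocal term at $i=0$ by a factor $1+Ct^{-r}$. The curvature gain absorbs this only if $r>2\sigma$, exactly as $k>2\alpha$ is used in the estimate of $\mathcal{D}$ in Section~\ref{sec5}. The ordering $r<\frac14<\sigma$ printed in the paper therefore has to be adjusted, e.g.\ to $\theta<\sigma<\frac14$ and $2\sigma<r<1-\theta-\lambda$, which $\theta<\frac14$ permits. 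This curvature-driven barrier is what your plan needs in place of the shift modulation and the Gronwall margin.
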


Noticing that $$\varrho(t,i,x)=t^{-\frac{3}{2}}e^{-\alpha_*(x-c_*(t-i))} \mathcal{V}(t,i,x),~~~~~t\ge 1,~i\in[0,i_\dagger),~x\in\R,$$ 
it follows from Lemma \ref{lem4.1} that
\begin{align*}
	\limsup_{t\to+\infty}\bigg(\S_0\chi_{c_*}\Big(x&-c_*(t-i)+\frac{3}{2\alpha_*}\ln t+\varsigma_1\Big)- \varrho(t,i,x)\bigg)\le 0\\
&~~~~~~~~	\le \liminf_{t\to+\infty}\bigg(\S_0\chi_{c_*}\Big(x-c_*(t-i)+\frac{3}{2\alpha_*}\ln t+\varsigma_2\Big)- \varrho(t,i,x) \bigg)
\end{align*}
uniformly in $i\in[0,i_\dagger)$ and $1\le x-c_*(t-i)+\frac{3}{2\alpha_*}\ln t\le (t-i_\dagger)^\theta$. Since $i\in[0,i_\dagger)$ is bounded, one derives that for any $m\in (0,\S_0\rho^*)$,
\begin{equation*}
	E_m(t)=x-c_*t+\frac{3}{2\alpha_*}\ln t+O_{t\to+\infty}(1).
\end{equation*}
 This compltes the proof of Theorem \ref{thm-1'}.
\end{proof}

Finally, we close this section by proving Lemma \ref{lem4.1}.
\begin{proof}[Proof of Lemma \ref{lem4.1}]
	  We  only outline the proof for the first inequality, and the second one can be handled analogously.
	  
First of all, substituting $\psi_n(t,i,x)$ ($n=1,2$) into the first equation of \eqref{moving frame-nonlinear_V}, it follows that 
\begin{equation*}
	\left|\big(\partial_t  + \partial_i\big) \psi_n(t,i,x)-\frac{3}{2t}\psi_n(t,i,x)\right|=\left|\frac{3 }{2\alpha_*t}t^{\frac{3}{2}}e^{\alpha_*(x-c_*(t-i))}\S_0\chi_{c_*}'\bigg(x-c_*(t-i)+\frac{3}{2\alpha_*}\ln t+\varsigma_n\bigg)\right|\le Ct^{\theta-1}
\end{equation*}
for $t\ge i+T_*$, $i\in(0,i_\dagger)$ and $X^-(t)\le x\le X^+(t-i)$.
Moreover, noticing that
\begin{align*}
	\K_* *\psi_n(t,i,x)&=\int_\R \K(x-y)e^{\alpha_*(x-y)}t^{\frac{3}{2}}e^{\alpha_*(y-c_*(t-i))}\S_0\chi_{c_*}\bigg(y-c_*(t-i)+\frac{3}{2\alpha_*}\ln t+\varsigma_n\bigg)\md y\\
	&=t^{\frac{3}{2}}e^{\alpha_*(x-c_*t)}e^{\alpha_*c_*i}\S_0  \int_\R \K(x-y)\chi_{c_*}\bigg(y-c_*(t-i)+\frac{3}{2\alpha_*}\ln t+\varsigma_n\bigg)\md y\\
	&=t^{\frac{3}{2}}e^{\alpha_*(x-c_*t)}e^{\alpha_*c_*i}\S_0 \K*\chi_{c_*}\bigg(x-c_*(t-i)+\frac{3}{2\alpha_*}\ln t+\varsigma_n\bigg),
\end{align*}
together with  the equation  \eqref{chi_TW} satisfied by $\chi_{c_*}$:
\begin{equation*}
	\chi_{c}(z)=1-\exp\Big(-\S_0\int_0^\infty \omega(i) \K*\chi_{c}(z+ci)\md i\Big),~~~z\in\R,
\end{equation*}
 we have that
\begin{align*}
	t^\frac{3}{2} e^{\alpha_*(x-c_*t)}&\S_0\left(\!1-\exp\!\Big(\!-t^{-\frac{3}{2}} e^{-\!\alpha_*(x-c_*t)} \int_0^{i_\dagger} \omega(i) e^{-\!\alpha_*c_* i} \K_* *\psi_n(t,i,x)\md i\Big)\!\right)\\
	&=	t^\frac{3}{2} e^{\alpha_*(x-c_*t)}\S_0\left(\!1-\exp\!\left(- \S_0\int_0^{i_\dagger} \omega(i)  \K *\chi_{c_*}\bigg(x-c_*(t-i)+\frac{3}{2\alpha_*}\ln t+\varsigma_n\bigg)\md i\right)\!\right)\\
	&=	t^\frac{3}{2} e^{\alpha_*(x-c_*t)}\S_0\chi_{c_*}\bigg(x-c_*t+\frac{3}{2\alpha_*}\ln t+\varsigma_n\bigg)\\
	&=\psi_n(t,0,x)
\end{align*}  
 for $t\ge i_\dagger+T_*$ and $X^-(t)\le x\le X^+(t)$. Namely, $\psi_n$ ($n=1,2$) satisfies  the second equation of \eqref{moving frame-nonlinear_V} for $t\ge i_\dagger+T_*$ and $X^-(t)\le x\le X^+(t)$.

Set now $\mathcal{Z}(t,i,x):=\big(\psi_1-\mathcal{V}\big)^+(t,i,x)$ for $t\ge i+T_*$, $i\in[0,i_\dagger)$ and $X^-(t)-R\le x\le X^+(t)+R$. It then follows that the function $\mathcal{Z}$ satisfies
\begin{equation*}
	\label{Z}
	\begin{aligned}
		\!	\left\{\begin{split}
			&\big(\partial_t  + \partial_i\big) \mathcal{Z}(t,i,x)-\frac{3}{2t}\mathcal{Z}(t,i,x) \le Ct^{\theta-1},~~~~~~~~~~ t\ge i+T_*,~ i\in(0,i_\dagger),~X^-(t)\le x\le X^+(t-i), \\
			&	\mathcal{Z}(t,0,x)\!\le \S_0 \int_0^{i_\dagger} \omega(i) e^{-\alpha_*c_* i} \K_* *\mathcal{Z}(t,i,x)\md i, ~~~~~~~~~~~~~~~~~~~~~t\ge i_\dagger+T_*,~X^-(t)\le x \le X^+(t),
		\end{split}\right.
	\end{aligned}
\end{equation*}
where the second equation follows from 
\begin{align*}
	\mathcal{Z}(t,0,x)&=\big(\psi_1-\mathcal{V}\big)^+(t,0,x)\\
	&=t^\frac{3}{2} e^{\alpha_*(x-c_*t)}\S_0\Bigg(1-\exp\!\left(\!-t^{-\frac{3}{2}} e^{-\!\alpha_*(x-c_*t)} \int_0^{i_\dagger} \omega(i) e^{-\!\alpha_*c_* i} \K_* *\psi_1(t,i,x)\md i\right)\\
	&~~~~~~~~~~~~~~~~~~~~~~~~~~~~~~-1+\exp\!\left(\!-t^{-\frac{3}{2}} e^{-\!\alpha_*(x-c_*t)} \int_0^{i_\dagger} \omega(i) e^{-\!\alpha_*c_* i} \K_* *\mathcal{V}(t,i,x)\md i\right)\Bigg)^+\\
	&=	t^\frac{3}{2} e^{\alpha_*(x-c_*t)}\S_0\exp\!\left(\!-t^{-\frac{3}{2}} e^{-\!\alpha_*(x-c_*t)} \int_0^{i_\dagger} \omega(i) e^{-\!\alpha_*c_* i} \K_* *\mathcal{V}(t,i,x)\md i\right)\Bigg(1-\\
	&~~~~~~~~~~~~~~~~~~~~~~~~~~~~~~ \exp\!\left(\!-t^{-\frac{3}{2}} e^{-\!\alpha_*(x-c_*t)} \int_0^{i_\dagger} \omega(i) e^{-\!\alpha_*c_* i}\K_* *\big(\psi_1-\mathcal{V}\big)(t,i,x)\md i\right)\Bigg)^+\\
	&\le \S_0 \left(\int_0^{i_\dagger} \omega(i) e^{-\!\alpha_*c_* i}\K_* *\big(\psi_1-\mathcal{V}\big)(t,i,x)\md i\right)^+\\
		&\le \S_0 \int_0^{i_\dagger} \omega(i) e^{-\!\alpha_*c_* i}\K_* *\big(\psi_1-\mathcal{V}\big)^+(t,i,x)\md i,~~~~~~~~~~~~t\ge i_\dagger+T_*,~~X^-(t)\le x\le X^+(t).
\end{align*}
We also observe that $\mathcal{Z}(t,0,x)=0$ for $t\in[T_*, i_\dagger+T_*]$ and $x\in[X^-(t),X^+(t)]$, thanks to \eqref{psi_n-2}. Moreover, it follows from \eqref{psi_n-1} that $\mathcal{Z}(t,i,x)=0$ for $t\ge i+T_*$, $i\in[0,i_\dagger)$ and $x\in[X^+(t-i), X^+(t)+R]$, whereas  $\mathcal{Z}(t,i,x)\le \psi_1(t,i,x)\le Ce^{-\alpha_*t^\theta}$ for $t\ge i+T_*$, $i\in[0,i_\dagger)$ and $x\in[X^-(t)-R, X^-(t)]$.  

To reach the conclusion, it suffices to show that
\begin{equation*}
	\label{Z-1}
	\mathcal{Z}(t,i,x)\to 0~~~~\text{as}~t\to+\infty,
\end{equation*}
uniformly for $i\in[0,i_\dagger)$ and  $X^-(t)\le x\le X^+(t-i)$. To do so, we construct 
\begin{equation*}
	\overline{\mathcal{Z}}(t,i,x)=\frac{1+\frac{c_*i}{t^r}}{t^\lambda}\cos\bigg(\frac{x-c_*(t-i)}{t^\sigma}\bigg)
\end{equation*}
for $t\ge i+T_*$, $i\in[0,i_\dagger)$ and $X^-(t)-R\le x\le X^+(t)+R$, in which we  choose 
$\label{para}
0<\lambda<\frac{1}{12}<r<\frac{1}{4}<\sigma<\frac{1}{3}$.
We shall check that $\overline{\mathcal{Z}}$ is an upper barrier of $\mathcal{Z}$ for $t\ge i+i_\dagger+T_*$, $i\in[0,i_\dagger)$ and $X^-(t)\le x\le X^+(t-i)$. For convenience, let us define 
\begin{equation*}
~~~~~~~~~~~~~~~~~~~~~~~	\xi(t,i,x)=\frac{x-c_*(t-i)}{t^\sigma},~~~~~t\ge i+T_*, ~i\in[0,i_\dagger),~x\in[X^-(t)-R,X^+(t)+R].
\end{equation*} 
Noticing that $\theta\in(\frac{4}{25},\frac{1}{4})$ and $0<r+\lambda<\frac{1}{3}<1-\theta$, we infer that 
\begin{align*}
	\big(\partial_t+&\partial_i\big)\overline{\mathcal{Z}}(t,i,x)-\frac{3}{2t}\overline{\mathcal{Z}}(t,i,x)\\
	=&\bigg(\frac{c_*}{t^{r+\lambda}}-\frac{rc_*i}{t^{1+r+\lambda}}-\frac{(\lambda+\frac{3}{2})\big(1+\frac{c_*i}{t^r}\big)}{t^{1+\lambda}}\bigg)\cos\big(\xi(t,i,x)\big)+\frac{\sigma\big(1+\frac{c_*i}{t^r}\big)}{t^{1+\lambda}}\xi(t,i,x)\sin\big(\xi(t,i,x)\big)
	\ge \frac{C}{t^{r+\lambda}}\gg   \frac{C}{t^{1-\theta}}
\end{align*}
for all $t\ge i+T_*$, $i\in(0,i_\dagger)$ and $X^-(t)\le x\le X^+(t-i)$.

On the other hand, we follow the same lines as in the estimation of  $\mathcal{D}(t,x)$ in \eqref{eqn-D} and  derive that   
\begin{align*}
	 \S_0 \!\int_0^{i_\dagger} \!\omega(i) e^{-\!\alpha_*c_* i} \K_* *\overline{\mathcal{Z}}(t,i,x)\md i&= \S_0\! \int_0^{i_\dagger}\! \omega(i) e^{-\!\alpha_*c_* i} \frac{1+\frac{c_*i}{t^r}}{t^\lambda} \!\int_\R\!\K_*(y) \cos\bigg(\frac{x-c_*t-(y-c_*i)}{t^\sigma}\bigg)\md y\md i\\
	 &\le \overline{\mathcal{Z}}(t,0,x)-\frac{C\delta_*\cos\big(\xi(t,0,x)\big)}{4t^{\lambda+2\sigma}}+\frac{C\sin\big(\xi(t,0,x)\big)}{t^{\lambda+3\sigma}}\le  \overline{\mathcal{Z}}(t,0,x)
\end{align*}
for $t\ge i_\dagger+T_*$ and $X^-(t)\le x\le X^+(t)$.

In addition, we notice that $\overline{\mathcal{Z}}(t,i,x)\ge Ct^{-\lambda}\gg Ce^{-\alpha_*t^\theta}\ge \mathcal{Z}(t,i,x)$ for $t\ge i+T_*$, $i\in[0,i_\dagger)$ and  $x\in[X^-(t)-R, X^-(t)]\cup[X^+(t-i),X^+(t)+R]$. For $t\in[T_*,i_\dagger+T_*]$, there holds $\overline{\mathcal{Z}}(t,0,x)>0=\mathcal{Z}(t,0,x)$ for all $x\in[X^-(t), X^+(t)]$. We then conclude from the maximum principle Proposition \ref{prop_cp_linear transport_2} that $\mathcal{Z}(t,i,x)\le \overline{\mathcal{Z}}(t,i,x)$ 
for $t\ge i+i_\dagger+T_*$, $i\in[0,i_\dagger)$ and $X^-(t)\le x\le X^+(t-i)$. Consequently,
\begin{equation*}
	\psi_1-\mathcal{V}(t,i,x)\le \overline{\mathcal{Z}}(t,i,x) \to 0~~~~\text{as}~t\to+\infty,
\end{equation*}
uniformly for $i\in[0,i_\dagger)$ and $x\in[X^-(t), X^+(t-i)]$. That is, $\limsup_{t\to+\infty}\big(\psi_1(t,i,x)- \mathcal{V}(t,i,x)\big)\le 0$ uniformly for $i\in[0,i_\dagger)$ and $x\in[X^-(t), X^+(t-i)]$. This completes the proof. \end{proof}

\bigskip
\noindent{\bf Data availability statement.} No new data were created or analyzed in this study.

\appendix

\section{Appendix}

\begin{proof}[Proof of Proposition \ref{prop_cp_R}]   Let $\overline \varrho$ and $\underline \varrho$ be
	given as in the statement. Set $z(t,i,x):=\underline \varrho(t,i,x)-\overline \varrho(t,i,x)$ for  $(t,i,x)\in\R_+\times[0,i_\dagger)\times\R$. It follows that $z$ satisfies
	\begin{equation}
		\label{z-R}
		\begin{aligned}
			\left\{\begin{split}
				&\partial_t z(t,i,x)  + \partial_i z(t,i,x) \le0,~~~~~~~~~~~~~~~~~~~~t>0, ~i\in(0,i_\dagger), ~ x\in\R, \\
				&	z(t,0,x)\le\S_0 \int_0^\infty \omega(i)  \K  *z(t,i,x)\md i,~~~~~~~~~~~~~~~~~~~~~ t> 0, ~ x\in\R,\\
				&z(0,i,x)\le 0,~~~~~~~~~~~~~~~~~~~~~~~~~~~~~~~~~~~~~~~~~~~~~~~~ i\in[0,i_\dagger), ~ x\in\R.
			\end{split}\right.
		\end{aligned}
	\end{equation}
	Multiplying the transport problem \eqref{z-R} by $\mathds{1}_{\{z>0\}}$, together with the notation  $z^+=\max(z,0)$, yields that
	\begin{equation}
		\label{cp-z^+}
		\begin{aligned}
			\left\{\begin{split}
				&	\partial_t z^+(t,i,x)  + \partial_i z^+(t,i,x) \le 0,~~~~~~~ ~~~~~~~~~~~~~~~~~t> 0, ~i\in(0,i_\dagger), ~x\in\R, \\
				&	z^+(t,0,x)\le \S_0 \int_0^\infty \omega(i)  \K  *z^+(t,i,x)\md i,  ~~~~~~~~~~~~~~~~~~~~~~~~~t> 0,   ~x\in\R,\\
				&	z^+(0,i,x)\equiv  0,~~~~~~~~~~~~~~~~~~~~~~~~~~~~~~~~~~~~~~~~~~~~~~~~~~~~~~  i\in[0,i_\dagger), ~ x\in\R.
			\end{split}\right.
		\end{aligned}
	\end{equation}

	Integrating the  inequality in \eqref{cp-z^+}  over $i\in[0,i_\dagger)$, altogether with the second inequality of \eqref{cp-z^+}, gives that
	\begin{equation*}
		\int_0^{i_\dagger} \partial_t z^+(t,i,x)\md i\le z^+(t,0,x)\le \S_0 \int_0^{i_\dagger} \omega(i)  \K  *z^+(t,i,x)\md i,~~~t> 0,~x\in\R,
	\end{equation*}
	where we has used the fact that the nontrivial part of the integration term in \eqref{cp-z^+} is actually  for $i\in(0,i_\dagger)$.
	We  intergrate the above inequality with respect to time over $(0,t)$ and derive that for $t> 0$ and $x\in\R$, 
	\begin{equation*}
		\int_0^{i_\dagger}  z^+(t,i,x)\md i=	\int_0^{i_\dagger}  z^+(t,i,x) - z^+(0,i,x)\md i\le \S_0\int_0^t \int_0^{i_\dagger} \omega(i)  \K  *z^+(\tau,i,x)\md i\md \tau.
	\end{equation*}
	Hence, for $t> 0$,
	\begin{align*}
		\sup_{x\in\R}	\int_0^{i_\dagger}  z^+(t,i,x)\md i&\le 	\sup_{x\in\R}\S_0\int_0^t \int_0^{i_\dagger} \omega(i)  \K  *z^+(\tau,i,x)\md i\md \tau\\
		& \le C\int_0^t 	\sup_{x\in\R}\int_0^{i_\dagger}  \K  *z^+(\tau,i,x)\md i\md \tau\\
		&\le C \int_0^t \sup_{x\in\R}\int_0^{i_\dagger}  z^+(\tau,i,x)\md i\md \tau.
	\end{align*}
	The Gronwall inequality then implies that  $	\sup_{x\in\R}	\int_0^{i_\dagger}  z^+(t,i,x)\md i\equiv 0$ for $t> 0$, whence $z^+(t,i,x)\equiv 0$ for  $(t,i,x)\in\R_+\times[0,i_\dagger)\times\R$. This implies that $\underline \varrho(t,i,x)\le \overline \varrho(t,i,x)$ for  $(t,i,x)\in\R_+\times[0,i_\dagger)\times\R$.

	Let us prove the strict inequality, for which we shall argue with \eqref{z-R}. Assume that 
	there exists $\tau_0>0$ such that
	\begin{equation*}
		[0,\tau_0]\subset\text{spt}(\tau)\cap\{i\in[0,i_\dagger)|z(0,i,x)<0~\text{for some}~x\in\R\}.
	\end{equation*} 
 We notice first from the method of characteristics that
\begin{equation}
	\label{z-t>i}
	\begin{aligned}
		z(t,i,x)\le 	\begin{cases}
			z(0,i-t,x), &t\le i,~i\in[0,i_\dagger),~x\in\R,\\
			z(t-i,0,x)\le \S_0 \int_0^\infty\! \omega(s)  \K  *z(t-i,s,x)\md s, &t>i,~i\in[0,i_\dagger), ~x\in\R.
		\end{cases}
	\end{aligned} 
\end{equation}
To conclude, it is therefore sufficient to prove that $z(t,0,x)<0$ for $t>0$ and $x\in\R$, thanks to \eqref{z-t>i}.
  
 Assume towards contradiction that there exist $t_0>0$ and $x_0\in\R$ such that $z(t_0,0,x_0)=0$. Then, based on the boundary inequality in \eqref{z-R} as well as the nonpositivity of $z$, it follows that
 \begin{equation*}
 		0=z(t_0,0,x_0)\le\S_0 \int_0^\infty \omega(i)  \K  *z(t_0,i,x_0)\md i\le 0,
 \end{equation*}
  which implies that
  \begin{equation*}
  	\K  *z(t_0,i,x_0)=0,~~~~~~i\in(0,\tau_0).
  \end{equation*}
Therefore, we have $z(t_0,\cdot,\cdot)=0$ in $(0,\tau_0)\times(x_0-R,x_0+R)$. By continuity  in $i$ of the function $z$, we have particularly $z(t_0,0,\cdot)=0$ in $(x_0-R,x_0+R)$. Then, it follows again from the boundary inequality in \eqref{z-R} that $\K_**z(t_0,i,y)=0$ for $i\in(0,\tau_0)$ and $y\in(x_0-R,x_0+R)$. This implies that $z(t_0,\cdot,\cdot)=0$ in $(0,\tau_0)\times(x_0-2R,x_0+2R)$. Due to the continuity, we infer that $z(t_0,\cdot,\cdot)=0$ in $[0,\tau_0]\times(x_0-2R,x_0+2R)$. By repeating the procedures as above and by continuity in $x$, we eventually find that the space set such that $z(t_0,i,\cdot)=0$ with $i\in[0,\tau_0]$ is nontrivial, and  both open and closed. Therefore, $z(t_0,\cdot,\cdot)=0$ in $[0,\tau_0]\times\R$.

Let us now divide into two cases: either $t_0\le\tau_0$, or $t_0>\tau_0$. For the former case, we derive from \eqref{z-t>i} that
\begin{equation*}
	0=z(t_0,i,x)\le z(0,i-t_0,x)\le 0~~~~\text{for}~t_0\le i\le\tau_0 ~\text{and}~x\in\R.
\end{equation*}
Namely, $z(0,i,x)=0$ for $0\le i\le \tau_0-t_0$ and $x\in\R$. This contradicts $z(0,0,\cdot)\not\equiv 0$ in $\R$. Therefore, the case of $t_0\le \tau_0$ is ruled out. Assume now that $t_0>\tau_0$, we have 
\begin{equation*}
	0=z(t_0,i,x)\le z(t_0-i,0,x)\le 0~~~~\text{for}~0\le i\le\tau_0(<t_0) ~\text{and}~x\in\R.
\end{equation*}
That is, $z(t,0,x)=0$ for $t\in[t_0-\tau_0,t_0]$ and $x\in\R$. In particular, we can choose $t_1:=t_0-\tau_0\in(0,t_0)$ such that $z(t_1,0,\cdot)=0$ in $\R$. We argue as above by distinguishing two cases: either $t_1\le \tau_0$, or $t_1>\tau_0$. Again, for the former case, we will find a contradiction through discussing  $z(t_1,i,x)$ for $t_1\le i\le \tau_0$ and $x\in\R$. For the latter, we can further choose $t_2:=t_1-\tau_0=t_0-2\tau_0\in(0,t_1)$ such that $z(t_2,0,\cdot)=0$ in $\R$. After finite steps, we will get $t_k:=t_{k-1}-\tau_0=t_0-k\tau_0\in(0,\tau_0]$  for some $k\in\N$, such that
\begin{equation*}
	0=z(t_k,i,x)\le z(0,i-t_k,x)\le 0~~~~\text{for}~t_k\le i\le\tau_0 ~\text{and}~x\in\R.
\end{equation*}
Thus, $z(0,i,x)=0$ for $0\le i\le \tau_0-t_k$ and $x\in\R$, contradicting $z(0,0,\cdot)\not\equiv 0$ in $\R$. Consequently, we also exclude the case of $t_0>\tau_0$. This completes the proof.
\end{proof}

\end{document}